\theoremstyle{plain}
\newtheorem{theorem}{Theorem}[section]
\newtheorem{lemma}[theorem]{Lemma}
\newtheorem{proposition}[theorem]{Proposition}
\newtheorem{corollary}[theorem]{Corollary}
\theoremstyle{definition}
\newtheorem{definition}[theorem]{Definition}
\newtheorem{remark}[theorem]{Remark}
\newtheorem{example}[theorem]{Example}
\numberwithin{equation}{section}
\newcommand{\Hom}{\operatorname{Hom}}
\newcommand{\Ext}{\operatorname{Ext}}
\newcommand{\thick}{\operatorname{thick}}
\newcommand{\per}{\operatorname{per}}
\newcommand{\add}{\operatorname{add}}
\newcommand{\ladd}{\operatorname{Add}}
\newcommand{\proj}{\operatorname{proj}}
\newcommand{\lproj}{\operatorname{Proj}}
\renewcommand{\mod}{\operatorname{mod}}
\newcommand{\lmod}{\operatorname{Mod}}
\newcommand{\Fac}{\operatorname{Fac}}
\newcommand{\emod}[2]{#1\text{-}\mod #2}
\newcommand{\elmod}[2]{#1\text{-}\lmod #2}
\newcommand{\AR}[1]{\tau_{[#1]}}
\def\k{\mathbf{k}}
\def\A{\mathcal{A}}
\def\C{\mathcal{C}}
\def\D{\mathcal{D}}
\def\dH{d\text{-}\mathcal{H}}
\def\dplusH{(d+1)\text{-}\mathcal{H}}
\def\S{\mathcal{S}}
\def\M{\mathcal{M}}
\def\K{\mathcal{K}}
\def\T{\mathcal{T}}
\def\F{\mathcal{F}}
\def\H{\mathcal{H}}
\def\P{\mathcal{P}}
\def\I{\mathcal{I}}
\def\X{\mathcal{X}}
\def\Y{\mathcal{Y}}
\def\MM{M}
\def\NN{N}
\def\LL{L}
\def\SS{S}
\def\PP{P}
\def\WW{W}
\def\XX{X}
\def\YY{Y}
\def\ZZ{Z}
\def\TT{T}
\def\FF{F}
\title{AIR tilting subcategories of extended hearts}
\author{Jiaqun Wei}
\address{School of Mathematical Sciences, Zhejiang Normal University, 321004 Jinhua, China}
\email{weijiaqun5479@zjnu.edu.cn}
\author{Yu Zhou}
\address{School of Mathematical Sciences,
Beijing Normal University,
100875 Beijing,
China}
\email{yuzhoumath@gmail.com}
\subjclass[2020]{16E35, 18E40, 18G80}
\keywords{extended heart, AIR tilting subcategory, silting subcategory, quasi-tilting subcategory, tilting subcategory, $d$-factor}
\thanks{The work was supported by the National Natural Science Foundation of China (Grant Nos. 12571042, 12271279, 12271249, 12031007) and by the Natural Science Foundation of Zhejiang Province (Grant No. LZ25A010002).}
\begin{document}

\begin{abstract}
    We introduce the notion of AIR tilting subcategories of extended hearts of $t$-structures on a triangulated category associated with silting subcategories. This notion generalizes $\AR{d}$-tilting pairs of extended finitely generated modules over finite-dimensional algebras to a more general framework, which includes both extended large modules over unitary rings and truncated subcategories of finite-dimensional derived categories of proper non-positive differential graded algebras. Within this setting, we establish a bijection between AIR tilting subcategories and silting subcategories. Furthermore, we define quasi-tilting and tilting subcategories of extended hearts, extending the corresponding notions from module categories, and investigate their fundamental properties along with the relationships among these tilting-related classes.
\end{abstract}

\maketitle

\section*{Introduction}

Tilting theory has long been one of the central tools in representation theory, providing powerful methods to induce equivalences between categories. These include, most notably, equivalences between subcategories of module categories as well as between derived categories. The subject originated in the late 1970s within the representation theory of finite-dimensional algebras. Inspired by the study of reflection functors \cite{BGP,APR}, the notion of (classical) tilting modules was introduced and developed in \cite{BB,HR}. This concept was subsequently generalized to modules of finite projective dimension, leading to the development of generalized tilting modules \cite{Mi}. These classical and generalized notions were later extended to module categories over arbitrary unitary rings, significantly expanding the scope of the theory to encompass modules that are not necessarily finitely generated \cite{ColbF,CT,AC,Ba}.

More recently, the development of cluster algebras has stimulated significant progress in the theory, notably with the introduction of support $\tau$-tilting modules for finite-dimensional algebras \cite{AIR}. A fundamental result in this direction is the bijection between support $\tau$-tilting modules, two-term silting complexes, and functorially finite torsion classes \cite{AIR,DF}. This framework has since been extended to the setting of arbitrary rings \cite{AMV}, where silting modules serve as the appropriate analogues. In the context of derived categories, silting complexes arise naturally in the classification of $t$-structures \cite{KV,KY}. To address the correspondence problem for silting complexes of arbitrary length, the category of $d$-extended finitely generated modules was introduced in \cite{G,Z}, where it was established that $\AR{d}$-tilting pairs correspond to $(d+1)$-term silting complexes \cite{Z}.

In this paper, for any positive integer $d$, we introduce the notion of AIR tilting subcategories within the $d$-extended hearts of $t$-structures associated with silting subcategories. This construction extends the aforementioned bijections to a significantly more general setting (see Theorem~\ref{thm:bi}). Our approach provides a unified framework that simultaneously recovers the cases of extended finitely generated module categories over finite-dimensional algebras, extended large module categories over arbitrary unitary rings, and truncated subcategories of finite-dimensional derived categories of proper non-positive differential graded algebras. We refer the reader to Section~\ref{sec:app} for further details and applications.

Within this unified framework, we also introduce the notion of quasi-tilting subcategories of extended hearts, generalizing the concept of quasi-tilting modules. The latter were originally introduced in \cite{CDT} to realize one half of the equivalences in the classical Brenner–Butler tilting theorem, and were subsequently extended to the setting of infinitely generated modules in \cite{AMV}. We investigate the homological and categorical properties of the associated $d$-factor subcategories (see Theorem~\ref{thm:qtilt}), and establish that every AIR tilting subcategory is quasi-tilting. For further connections between quasi-tilting modules and silting modules in the classical setting, we refer the reader to \cite{AMV,W4}.

Finally, we introduce and study tilting subcategories of extended hearts. These subcategories can be viewed as a generalization of classical tilting modules; in particular, they share the characteristic property that their induced torsion classes contain all injective objects (see Corollary~\ref{cor:injmap2}). They also generalize tilting modules of arbitrary finite projective dimension (see Example~\ref{exm:tiltmod}). We establish several characterizations of tilting subcategories in terms of their relationships with AIR tilting and quasi-tilting subcategories. For the precise statements, we refer the reader to Proposition~\ref{prop5} and Theorem~\ref{thm:equiv}.

The paper is organized as follows. In Section~\ref{sec:1}, we recall the basic definitions and properties of silting subcategories. In particular, we provide a criterion for a presilting subcategory to be silting, which plays a key role in our subsequent proofs. Section~\ref{sec:2} is devoted to the study of extended hearts of $t$-structures associated with silting subcategories. Here, we investigate the properties of torsion classes induced by silting subcategories and, building on these results, we introduce the notion of AIR tilting subcategories and establish their bijection with silting subcategories. In Section~\ref{sec:3}, we introduce and study quasi-tilting subcategories of extended hearts, focusing on the fundamental properties of their associated $d$-factor subcategories. Section~\ref{sec:tilt} is concerned with tilting subcategories of extended hearts and their relationships with AIR tilting and quasi-tilting subcategories. Finally, in Section~\ref{sec:app}, we apply our results to two specific settings, which recover the cases of extended finitely generated module categories over finite-dimensional algebras and extended large module categories over unitary rings.

\subsection*{Acknowledgment}

The second author is grateful to Jie Xiao for a very helpful suggestion that enabled the extension of an earlier version of this paper to a more general setting. He also wishes to thank Xiao-Wu Chen, Dong Yang, and Bin Zhu for their insightful discussions on related topics. Additionally, he would like to thank Jifen Liu and Liangwei Huang for pointing out the omission of certain necessary assumptions in a previous draft.

\section{Silting subcategories}\label{sec:1}

Throughout the paper, the shift functor in a triangulated category is denoted by $[1]$. For any subcategory $\X$ of an additive category $\C$, we denote by $\add\X$ the \emph{additive closure} of $\X$ in $\C$, that is, the full subcategory of $\C$ consisting of direct summands of finite direct sums of objects of $\X$. When we say that $\X$ is a subcategory of an additive category $\C$, denoted by $\X\subseteq\C$, we always assume that $\X$ is full and satisfies $\X=\add\X$. We write $\XX\in\C$ to indicate that $\XX$ is an object of $\C$. For any $X,Y\in\C$, we simply denote $\Hom_\C(\XX,\YY)$ by $\Hom(\XX,\YY)$ if no confusion can arise. 

For any $\X,\Y\subseteq\C$, we write $\Hom(\X,\Y)=0$ to mean that $\Hom(\XX,\YY)=0$ for all $\XX\in\X$ and $\YY\in\Y$. The analogous conventions apply to the notations $\Hom(\X,\YY)=0$ and $\Hom(\XX,\Y)=0$. A morphism $f\in\Hom(\XX,\YY)$ is called a \emph{right $\X$-approximation} of $\YY$, if $\XX\in\X$ and every morphism $g\in\Hom(\XX',\YY)$ with $\XX'\in\X$ factors through $f$. We say that $\X$ is a contravariantly finite subcategory of $\C$ if every object of $\C$ admits a right $\X$-approximation.

For any subcategory $\X$ of a triangulated category $\C$, we consider the following full subcategories of $\C$:
$$\C^{\leq 0}_\X:=\{\YY\in\C\mid \Hom(\X,\YY[i])=0\text{ for all $i>0$} \},$$
$$\C^{\geq 0}_\X:=\{\YY\in\C\mid \Hom(\X,\YY[i])=0\text{ for all $i<0$} \}.$$
For any two subcategories $\X$ and $\Y$ of $\C$, we denote by $\X\ast\Y$ the full subcategory of $\C$ consisting of all objects $\ZZ$ such that there exists a triangle
$$\XX\to \ZZ\to\YY\to\XX[1],$$
in $\C$ with $\XX\in\X$ and $\YY\in\Y$. A subcategory $\X$ of $\C$ is called \emph{closed under extensions} if $\X*\X\subseteq\X$.

Throughout the remainder of this section, let $\C$ be an arbitrary triangulated category. We recall the following notion from \cite{KV,AI}. 
\begin{definition}\label{def:silting}
    Let $\S$ be a subcategory of $\C$. 
    \begin{enumerate}
        \item We call $\S$ \emph{presilting} if 
        $\Hom(\S,\S[i])=0$ 
        for all $i>0$.
        \item We call $\S$ \emph{silting} if it is presilting and the smallest triangulated subcategory of $\C$ containing $\S$ is $\C$ itself.
    \end{enumerate}
    For two silting subcategories $\S$ and $\S'$ of $\C$, we write $\S\geq \S'$ if $\Hom(\S,\S'[i])=0$ for any $i>0$.
\end{definition}

Let $\S$ be a silting subcategory of $\C$. For any integers $p\leq q$, we denote 
$$\S^{[p,q]}:=\S[p]*\S[p+1]*\cdots*\S[q].$$
By \cite[Proposition~2.7]{IYa}, $\S^{[p,q]}$ is closed under direct summands. For any integer $d\geq 0$, an object (resp. subcategory) of $\S^{[0,d]}$ is called \emph{$(d+1)$-term} with respect to $\S$. We collect some basic properties of a silting subcategory.

\begin{lemma}[{\cite[Proposition~2.23]{AI}}]\label{AI2.23}
    Let $\S$ be a silting subcategory of $\C$. Then
    \[\C=\bigcup_{n\geq 0}\S^{[-n,n]}.\]
\end{lemma}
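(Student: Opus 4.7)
The plan is to define $\T := \bigcup_{n \geq 0} \S^{[-n,n]}$ and show that $\T$ is a triangulated subcategory of $\C$ containing $\S$. Since $\S$ is silting, the minimality clause in \Cref{def:silting}(2) will then force $\T = \C$.

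The containment $\S = \S^{[0,0]} \subseteq \T$ is immediate, and closure of $\T$ under $[\pm 1]$ follows from $\S^{[-n,n]}[\pm 1] \subseteq \S^{[-(n+1), n+1]}$. Closure under direct summands is the already-recalled fact that each $\S^{[-n,n]}$ is closed under direct summands \cite[Proposition~2.7]{IYa}.

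The substantive step is closure under extensions, and my plan is to reduce it to the swap relation obtained from the presilting hypothesis:
\[\S[j] * \S[i] \;\subseteq\; \S[i] * \S[j] \qquad (i \leq j).\]
Indeed, for any triangle $S_1[j] \to Y \to S_2[i] \to S_1[j+1]$ with $S_1, S_2 \in \S$, the connecting morphism lies in $\Hom(S_2, S_1[j-i+1])$, which vanishes because $j-i+1 \geq 1$; thus the triangle splits and $Y \cong S_1[j] \oplus S_2[i] \in \S[i] * \S[j]$. Specializing $i = j$ also yields $\S[i] * \S[i] = \S[i]$. Iterating the swap in bubble-sort fashion (using associativity of $*$) reorders the product so that
\[\S^{[-n,n]} * \S^{[-n,n]} \;\subseteq\; \S[-n]*\S[-n]*\S[-n+1]*\S[-n+1]*\cdots*\S[n]*\S[n] \;=\; \S^{[-n,n]}.\]
Hence, for any triangle $X \to Y \to Z \to X[1]$ with $X, Z \in \T$, choosing $n$ large enough that both $X$ and $Z$ lie in $\S^{[-n,n]}$ places $Y$ in $\S^{[-n,n]} \subseteq \T$.

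Closure under cones of arbitrary morphisms then follows by rotation: for $f \colon X \to Y$ in $\T$, the cone $C(f)$ fits in a triangle $Y \to C(f) \to X[1] \to Y[1]$, so $C(f) \in \T * \T[1] \subseteq \T$. Combined with closure under shifts and direct summands, this makes $\T$ a triangulated subcategory of $\C$ containing $\S$, completing the proof. The main obstacle is the sorting step in the extension-closure argument; once the vanishing $\Hom(S_2[i], S_1[j+1]) = 0$ for $i \leq j$ is identified, everything else is a formal manipulation with triangles.
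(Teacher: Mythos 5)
Your argument is correct: the paper offers no proof of its own (it simply cites \cite[Proposition~2.23]{AI}), and your proof is essentially the standard one given there — show that $\bigcup_{n\geq 0}\S^{[-n,n]}$ is a triangulated subcategory containing $\S$, the key point being the swap $\S[j]*\S[i]\subseteq\S[i]*\S[j]$ for $i\leq j$ and the absorption $\S[i]*\S[i]=\S[i]$, both of which follow from the presilting vanishing together with $\S=\add\S$. The bubble-sort reduction and the appeal to minimality in \Cref{def:silting}(2) are exactly as in the cited reference, so there is nothing to add.
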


\begin{lemma}[{\cite[Lemma~3.7]{AMY}}]\label{compare}
    Let $\S$ and $\S'$ be silting subcategories of $\C$ and let $d\geq 0$ be an integer. Then $\S\subseteq\S'^{[0,d]}$ if and only if $\S'\subseteq\S^{[-d,0]}$.
\end{lemma}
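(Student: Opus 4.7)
By a symmetry argument I plan to prove only the forward implication $\S\subseteq\S'^{[0,d]} \Rightarrow \S'\subseteq\S^{[-d,0]}$; the converse will follow by applying it to the pair of silting subcategories $(\S'[d], \S)$, since $(\S'[d])^{[-d,0]}=\S'^{[0,d]}$. So I fix $T\in\S'$; by \Cref{AI2.23} we have $T\in\S^{[-n,m]}$ for some $n,m\ge 0$, and the plan is to reduce $m$ down to $0$ and then $n$ down to at most $d$, using two Hom-vanishing estimates derived from the hypothesis.

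The estimates I need are: (i) $\Hom(T,\S[k])=0$ for every $k\ge 1$, and (ii) $\Hom(\S,T[i])=0$ for every $i>d$. Both follow from the same pattern: each $S\in\S$ admits a finite filtration with successive cones in $\S'[0],\ldots,\S'[d]$, so via the long exact sequences in $\Hom$ the relevant groups are controlled by $\Hom(T,\S'[j])$ or $\Hom(\S',T[j])$ with $j>0$, all of which vanish because $T\in\S'$ and $\S'$ is presilting.

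For the reduction of $m\ge 1$, I use the decomposition $\S^{[-n,m]}=\S^{[-n,m-1]}*\S[m]$ to find a triangle $A\to T\to B\to A[1]$ with $A\in\S^{[-n,m-1]}$ and $B\in\S[m]$. By (i) the map $T\to B$ vanishes, so rotating gives a triangle whose first morphism is zero, which splits and yields $A\cong T\oplus B[-1]$; as $\S^{[-n,m-1]}$ is closed under direct summands (by the remark following \Cref{def:silting}), this forces $T\in\S^{[-n,m-1]}$. Iterating drops $m$ to $0$. The reduction of $n>d$ is strictly analogous: $\S^{[-n,0]}=\S[-n]*\S^{[-n+1,0]}$ produces a triangle $C\to T\to D\to C[1]$ whose first morphism vanishes by (ii) (since $\Hom(C,T)=\Hom(\S,T[n])=0$ for $n>d$), so the triangle splits with $D\cong T\oplus C[1]$, and $T$ is a direct summand of $D\in\S^{[-n+1,0]}$. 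The main obstacle I anticipate is simply the bookkeeping around these splittings of triangles with a vanishing morphism together with the direct-summand closure of $\S^{[p,q]}$; beyond that, everything reduces to routine applications of (i) and (ii).
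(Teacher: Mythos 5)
The paper does not prove this lemma; it is quoted verbatim from \cite[Lemma~3.7]{AMY}, so there is no in-paper argument to compare against. Your proof is correct and is essentially the standard argument (and the one used in loc.\ cit.): place $T\in\S^{[-n,m]}$ via \Cref{AI2.23}, derive the two Hom-vanishing estimates from the filtration of objects of $\S$ by shifts of $\S'$, and peel off the outer layers using split triangles together with the direct-summand closure of $\S^{[p,q]}$; the symmetry reduction via $(\S'[d],\S)$ also checks out. The only nitpick is cosmetic: in the $m$-reduction the vanishing morphism $T\to B$ becomes the \emph{connecting} morphism of the rotated triangle $B[-1]\to A\to T\to B$ rather than its first morphism, but the splitting $A\cong T\oplus B[-1]$ and the rest of the argument are unaffected.
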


\begin{lemma}[{\cite[Proposition~2.14 and Theorem~2.11]{AI}}]\label{sil=}
    For any two silting subcategories $\S$ and $\S'$ of $\C$, if $\C_\S^{\leq 0}=\C_{\S'}^{\leq 0}$, then $\S=\S'$.
\end{lemma}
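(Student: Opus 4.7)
The plan is to turn the hypothesis $\C_\S^{\leq 0}=\C_{\S'}^{\leq 0}$ into symmetric $\Hom$-vanishing between $\S$ and $\S'$, and then to inductively trim the filtration interval $\S'^{[-n,n]}$ containing a given object of $\S$ down to $\S'$ itself.

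First, I would observe that since $\S$ is presilting, $\S\subseteq\C_\S^{\leq 0}=\C_{\S'}^{\leq 0}$, and unwinding the definition this says $\Hom(\S',\S[i])=0$ for all $i>0$, i.e., $\S'\geq\S$. By the same reasoning applied to $\S'$, I also get $\S\geq\S'$.

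Next, to prove $\S\subseteq\S'$ (the reverse inclusion is symmetric), I pick $S\in\S$ and use \Cref{AI2.23} to place it in $\S'^{[-n,n]}$ for some $n\geq 0$. The aim is to shrink the interval by one at each end. For the top trimming, I decompose $\S'^{[-n,n]}=\S'^{[-n,n-1]}*\S'[n]$ to obtain a triangle $A\to S\to B\to A[1]$ with $A\in\S'^{[-n,n-1]}$ and $B\in\S'[n]$. The inequality $\S\geq\S'$ together with $n>0$ gives $\Hom(S,B)=0$, so the map $S\to B$ is zero, the triangle splits as $A\cong S\oplus B[-1]$, and $S$ is a direct summand of $A\in\S'^{[-n,n-1]}$. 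Invoking closure of $\S'^{[p,q]}$ under direct summands (stated just before \Cref{AI2.23}, from \cite[Proposition~2.7]{IYa}), I conclude $S\in\S'^{[-n,n-1]}$. For the bottom trimming on an object of $\S'^{[-n,0]}$, I would write $\S'^{[-n,0]}=\S'[-n]*\S'^{[-n+1,0]}$, extract a triangle $C\to S\to D\to C[1]$, and use $\S'\geq\S$ (with $n>0$) to force $C\to S$ to be zero; the splitting then exhibits $S$ as a summand of $D\in\S'^{[-n+1,0]}$. Iterating the two trimmings lands $S$ in $\S'^{[0,0]}=\S'$.

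The main obstacle is recognizing, at each trimming step, exactly which orthogonality inequality kills the connecting map: $\S\geq\S'$ handles the top stratum and $\S'\geq\S$ handles the bottom. Once Step~1 puts both inequalities in hand, the rest is a purely formal iteration of triangle splittings packaged by the summand-closure of $\S'^{[p,q]}$, with no further homological input needed.
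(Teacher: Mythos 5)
Your proof is correct: Step~1 recovers \cite[Proposition~2.14]{AI} (deriving the mutual inequalities $\S\geq\S'$ and $\S'\geq\S$ from the equality of co-aisles), and the two-sided trimming in Step~2 is precisely the antisymmetry argument behind \cite[Theorem~2.11]{AI} — which is exactly what the paper invokes, since it cites these results rather than proving the lemma. The triangle splittings and the appeal to summand-closure of $\S'^{[p,q]}$ are all used correctly, so there is no gap.
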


A \emph{thick} subcategory of $\C$ is a triangulated subcategory closed under direct summands.

\begin{remark}\label{rmk:silting}
    Let $\S$ be a presilting subcategory of $\C$. Then $\S$ is silting if and only if the smallest thick subcategory of $\C$ containing $\S$ is $\C$ itself. Indeed, by Lemma~\ref{AI2.23}, $\C$ is the union of $\S^{[-n,n]}$, $n\geq 0$, each of which is closed under direct summands.
\end{remark}

The next lemma is a slight modification of \cite[Lemma~4.2]{AMY}, whose proof proceeds in essentially the same way.

\begin{lemma}\label{lem01}
    Let $\S$ be a contravariantly finite presilting subcategory of $\C$. Then for any integer $l>0$, we have
    \[\C_\S^{\leq 0}=\S^{[0,l-1]}*\C_\S^{\leq 0}[l].\]
\end{lemma}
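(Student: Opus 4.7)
The plan is to prove the two inclusions separately, with the nontrivial direction proceeding by induction on $l$ and the base case being the only place where contravariant finiteness is used.

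For the easy inclusion $\S^{[0,l-1]}*\C_\S^{\leq 0}[l]\subseteq \C_\S^{\leq 0}$, I would take $Z$ fitting in a triangle $X\to Z\to Y\to X[1]$ with $X\in\S^{[0,l-1]}$ and $Y\in\C_\S^{\leq 0}[l]$, and apply $\Hom(S,-[i])$ for $S\in\S$ and $i>0$. The presilting hypothesis together with an induction on the length of the filtration defining $\S^{[0,l-1]}$ forces $\Hom(\S,X[i])=0$ for $i>0$, while writing $Y=Y'[l]$ with $Y'\in\C_\S^{\leq 0}$ immediately gives $\Hom(\S,Y[i])=0$; the long exact sequence then puts $Z$ in $\C_\S^{\leq 0}$.

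For the reverse inclusion, I would first establish the base case $l=1$. Given $Y\in\C_\S^{\leq 0}$, contravariant finiteness furnishes a right $\S$-approximation $f\colon S_0\to Y$, which I complete to a triangle
\[Y'\to S_0\xrightarrow{f} Y\to Y'[1].\]
Applying $\Hom(S,-)$ for $S\in\S$ yields a long exact sequence in which $f_\ast$ is surjective (by the approximation property), while $\Hom(\S,S_0[j])=0$ for $j>0$ (presilting) and $\Hom(\S,Y[j])=0$ for $j>0$ (the assumption on $Y$). Chasing this sequence shows $\Hom(\S,Y'[j])=0$ for every $j>0$, so $Y'\in\C_\S^{\leq 0}$, and rotating the triangle expresses $Y$ as an extension in $\S\ast\C_\S^{\leq 0}[1]$.

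For the inductive step, assume the equality for $l-1$. Given $Y\in\C_\S^{\leq 0}$, the inductive hypothesis provides a triangle $X\to Y\to Z\to X[1]$ with $X\in\S^{[0,l-2]}$ and $Z=Z'[l-1]$ for some $Z'\in\C_\S^{\leq 0}$. Applying the base case to $Z'$ yields a triangle $S'_0\to Z'\to Z''\to S'_0[1]$ with $S'_0\in\S$ and $Z''\in\C_\S^{\leq 0}[1]$; shifting by $[l-1]$ puts $Z$ in $\S[l-1]\ast\C_\S^{\leq 0}[l]$. Associativity of $\ast$ (which follows from the octahedral axiom) then gives
\[Y\in X\ast\bigl(\S[l-1]\ast\C_\S^{\leq 0}[l]\bigr)=\bigl(X\ast\S[l-1]\bigr)\ast\C_\S^{\leq 0}[l]\subseteq \S^{[0,l-1]}\ast\C_\S^{\leq 0}[l],\]
completing the induction. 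The only delicate step is the base case: the entire argument hinges on observing that the approximation property precisely forces the first potentially nonzero term $\Hom(\S,Y'[1])$ to vanish, so I expect no serious obstacle beyond being careful with that long exact sequence.
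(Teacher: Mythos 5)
Your proof is correct, and it is essentially the argument the paper has in mind: the paper omits the proof and refers to \cite[Lemma~4.2]{AMY}, whose mechanism is exactly yours — take a right $\S$-approximation, use its surjectivity on $\Hom(\S,-)$ together with presilting to show the cocone stays in $\C_\S^{\leq 0}$, and iterate/assemble via the octahedral axiom. Your handling of the delicate $\Hom(\S,Y'[1])$ term in the base case is exactly right.
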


Below, we provide a necessary and sufficient condition for a presilting subcategory to be silting, which will be used in the next section. We also refer the reader to \cite[Proposition~4.2]{W1} and \cite[Proposition~4.2]{AMV} for a similar criterion in the case $\C=K^b(\lproj R)$ the homotopy category of bounded complexes of projective modules over a unitary ring $R$.

\begin{proposition}\label{prop1}
    Let $\P$ be a silting subcategory of $\C$, and $d$ be a positive integer. A contravariantly finite presilting subcategory $\S$ of $\C$ that is $(d+1)$-term with respect to $\P$ is silting if and only if $\C_\S^{\leq 0}\subseteq \C_\P^{\leq 0}$.
\end{proposition}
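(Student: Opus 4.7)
We prove the two implications separately.

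For the forward direction, suppose $\S$ is silting. Since $\S\subseteq\P^{[0,d]}$ and both $\S$ and $\P$ are silting, \Cref{compare} yields $\P\subseteq\S^{[-d,0]}$. Fix $Y\in\C_\S^{\leq 0}$ and $i>0$. Each $P\in\P$ admits a filtration whose subquotients lie in $\S[k]$ for $-d\leq k\leq 0$; applying $\Hom(-,Y[i])$ to this filtration produces terms of the form $\Hom(\S,Y[i-k])$, all of which vanish since $i-k\geq i>0$. The resulting long exact sequences then force $\Hom(P,Y[i])=0$, so $Y\in\C_\P^{\leq 0}$.

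For the backward direction, $\thick\P=\C$ since $\P$ is silting, so by \Cref{rmk:silting} it suffices to prove $\P\subseteq\thick\S$. Fix $P\in\P$. A symmetric filtration argument shows $P[d]\in\C_\S^{\leq 0}$: for $S\in\S\subseteq\P^{[0,d]}$ and $i>0$, a filtration of $S$ by shifts $\P[0],\dots,\P[d]$ reduces $\Hom(S,P[d+i])$ to terms $\Hom(\P,P[d+i-k])$ with $0\leq k\leq d$, all of which vanish since $d+i-k\geq i>0$ and $\P$ is presilting. Now \Cref{lem01} with $l=d+1$ decomposes $P[d]\in\C_\S^{\leq 0}=\S^{[0,d]}*\C_\S^{\leq 0}[d+1]$ into a triangle
$$S'\to P[d]\xrightarrow{g}Z\to S'[1]$$
with $S'\in\S^{[0,d]}$ and $Z\in\C_\S^{\leq 0}[d+1]$. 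The hypothesis $\C_\S^{\leq 0}\subseteq\C_\P^{\leq 0}$ applied to $Z[-(d+1)]$ gives $\Hom(\P,Z[j])=0$ for all $j\geq -d$; specialising to $j=-d$ yields $\Hom(P[d],Z)=\Hom(P,Z[-d])=0$, so $g=0$. A triangle with zero middle morphism decomposes as a sum of trivial triangles, producing $S'\cong P[d]\oplus Z[-1]$. Since $\S^{[0,d]}$ is closed under direct summands by \cite[Proposition~2.7]{IYa}, we conclude $P[d]\in\S^{[0,d]}$, hence $P\in\S^{[-d,0]}\subseteq\thick\S$ as required.

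The main delicacy is shift bookkeeping. The choice $l=d+1$ in \Cref{lem01} places $Z$ in precisely the smallest shift of $\C_\S^{\leq 0}$ guaranteeing $\Hom(\P,Z[-d])=0$ at the borderline degree $j=-d$, which is exactly what the splitting step needs; a smaller $l$ would leave this critical degree uncontrolled.
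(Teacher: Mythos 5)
Your proof is correct and follows essentially the same route as the paper's: the ``if'' direction is the identical argument (show $\P[d]\subseteq\C_\S^{\leq 0}$, decompose via \Cref{lem01} with $l=d+1$, and split the resulting triangle using $\Hom(\P[d],Z)=0$ to land $P[d]$ in $\thick\S$). The only difference is that for the ``only if'' direction the paper simply cites \cite[Proposition~2.14]{AI} via $\S\leq\P$, whereas you unpack that citation into the direct filtration argument using \Cref{compare}; both are fine.
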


\begin{proof}
    The ``only if" part: Since $\S\subseteq\P^{[0,d]}$, we have $\S\leq \P$. Hence, it follows directly from \cite[Proposition~2.14]{AI} that $\C_\S^{\leq 0}\subseteq \C_\P^{\leq 0}$. 
    
    The ``if" part: Since $\S\subseteq\P^{[0,d]}$, we have $\Hom(\S,\P[i])=0$ for any $i\geq d+1$. Thus, $\P[d]\subseteq \C_\S^{\leq 0}$. By Lemma~\ref{lem01}, we have
    $$\C_\S^{\leq 0}\subseteq\S^{[0,d]}*\C_\S^{\leq 0}[d+1].$$
    Therefore, for any $\PP\in\P$, there exists a triangle
    $$\XX\to \PP[d]\to\YY\to\XX[1],$$
    where $\XX\in\S^{[0,d]}$ and $\YY\in\C_\S^{\leq 0}[d+1]$. Then, by $\C_\S^{\leq 0}\subseteq \C_\P^{\leq 0}$, we have $\YY\in \C_\P^{\leq 0}[d+1]$. So $\Hom(\P[d],\YY)=0$, which, together with the above triangle, implies that $\PP[d]$ is a direct summand of $\XX$. Consequently, the smallest thick subcategory of $\C$ containing $\S$ includes $\P[d]$. Therefore, it includes $\P$, and therefore equals $\C$. Thus, by Remark~\ref{rmk:silting}, $\S$ is silting.
\end{proof}

\section{AIR tilting subcategories of extended hearts}\label{sec:2}

Throughout the rest of the paper, let $\D$ be a triangulated category admitting a presilting subcategory $\P$ satisfying the following conditions.
\begin{enumerate}[label=(P\arabic*), ref=(P\arabic*)]
    \item[(P0)] $\P$ is contravariantly finite in $\D$.
    \item\label{item:P1} There exists an integer $m<0$ such that $\Hom(\P,\P[i])=0$ for all $i<m$.
    \item\label{item:P2} $(\D^{\leq 0}_\P,\D^{\geq 0}_\P)$ is a non-degenerate $t$-structure on $\D$. 
\end{enumerate}
See Section~{sec:app} for motivating examples. For simplicity, we write $\D^{\leq 0}:=\D^{\leq 0}_\P$ and $\D^{\geq 0}:=\D^{\geq 0}_\P$. We then consider the following subcategories of $\D$:
\begin{itemize}
    \item $\D^{\leq q}:=\D^{\leq 0}[-q]$ and $\D^{\geq q}:=\D^{\geq 0}[-q]$ for any integer $q$;
    \item $\D^{[p,q]}:=\D^{\leq q}\cap\D^{\geq p}$ for any integers $p\leq q$;
    \item $\H:=\D^{[0,0]}=\D^{\leq 0}\cap\D^{\geq 0}$ the heart of the $t$-structure $(\D^{\leq 0},\D^{\geq 0})$; and
    \item $\K=\thick(\P)$ the smallest thick subcategory of $\D$ containing $\P$.
\end{itemize}
For any integer $p$, the inclusions $\D^{\leq p}\to\D$ and $\D^{\geq p}\to\D$ admit right and left adjoints, respectively, denoted by
$$\sigma^{\leq p}:\D\hookrightarrow \D^{\leq p}\text{ and }\sigma^{\geq p}:\D\hookrightarrow \D^{\geq p},$$
which are called \emph{truncations}. For any integers $p\leq q$, we define the functor
$$H^{[p,q]}:=\sigma^{\geq p}\circ\sigma^{\leq q}\simeq\sigma^{\leq q}\circ\sigma^{\geq p}:\D\to \D^{[p,q]}.$$
For any integer $q$, we define the functor
\[H^q:=H^{[q,q]}[q]:\D\to\H.\]
Since the $t$-structure $(\D^{\leq 0}_\P,\D^{\geq 0}_\P)$ is non-degenerate, we have
\[\D^{\leq 0}_\P=\{\YY\mid H^i(\YY)=0,\ \forall i>0\}\text{ and } \D^{\geq 0}_\P=\{\YY\mid H^i(\YY)=0,\ \forall i<0\}.\]

By definition, the presilting subcategory $\P$ is in fact a silting subcategory of $\K$. We recall a well-known correspondence between silting subcategories of $\K$ and $t$-structures on $\D$.

\begin{proposition}\label{silt to t-str}
    Let $\S$ be a silting subcategory of $\K$ such that it is contravariantly finite in $\D$ and $\S\subseteq\P^{[0,d]}$ for some integer $d>0$. Then $(\D_\S^{\leq 0},\D_\S^{\geq 0})$ is a $t$-structure on $\D$ and $\D^{\leq -d}\subseteq\D_\S^{\leq 0}\subseteq\D^{\leq 0}$. Moreover, if two silting subcategories $\S$ and $\S'$ of $\K$ satisfy $\D_\S^{\leq 0}=\D_{\S'}^{\leq 0}$, then $\S=\S'$.
\end{proposition}

\begin{proof}
    The first statement follows essentially from \cite[Lemma~5.1]{AMY}, while the last statement follows from $\D_\S^{\leq 0}\cap\K=\K_\S^{\leq 0}$ and applying Lemma~\ref{sil=} to the case $\C=\K$.
\end{proof}

From now on, let $d$ be a fixed positive integer, and define the \emph{$d$-extended heart} of $(\D^{\leq 0},\D^{\geq 0})$ as the full subcategory
$$\dH:=\D^{[-d+1,0]}=\D^{\leq 0}\cap\D^{\geq -d+1}=\H[d-1]*\cdots*\H[1]*\H.$$
Since both $\D^{\leq 0}$ and $\D^{\geq -d+1}$ are closed under extensions and direct summands, $\dH$ is closed under them as well. Consequently, $\dH$ is an extriangulated category in the sense of \cite{NP}. The biadditive functor $\mathbb{E}$ is defined by 
$$\mathbb{E}(\MM,\NN):=\Hom(\MM,\NN[1])$$ 
for $\MM,\NN\in \dH$. Given a triangle in $\D$
$$\NN\xrightarrow{f}\LL\xrightarrow{g}\MM\xrightarrow{\delta}\NN[1]$$ 
with $\NN,\LL,\MM\in\dH$, we define
$$\NN\xrightarrow{f}\LL\xrightarrow{g}\MM$$ 
to be an \emph{extriangle} in $\dH$. Unless otherwise stated, every extriangle in this paper is taken in $\dH$. For further details on the theory of extriangulated categories, we refer the reader to \cite{NP}.

In this section, we introduce the notion of AIR tilting subcategories of $\dH$ and establish their correspondence with silting subcategories of $\K$. An object $\SS$ of $\K$ is called \emph{$(d+1)$-term} if $\SS\in\P^{[0,d]}$. Similarly, a subcategory $\S$ of $\K$ is $(d+1)$-term if $\S\subseteq\P^{[0,d]}$.
\begin{definition}
    A \emph{$\P$-presentation} of an object $\MM\in\dH$ is a $(d+1)$-term object $\SS$ of $\K$ satisfying $H^{[-d+1,0]}(\SS)\cong\MM$.

    A \emph{$\P$-presentation} of a subcategory $\M\subseteq\dH$ is a $(d+1)$-term subcategory $\S$ of $\K$ satisfying $\add H^{[-d+1,0]}(\S)=\M$, with $$H^{[-d+1,0]}(\S):=\{H^{[-d+1,0]}(\SS)\mid\SS\in\S\}.$$
\end{definition}

By definition, any $(d+1)$-term subcategory $\S$ of $\K$ is a $\P$-presentation of a subcategory $\add H^{[-d+1,0]}(\S)$ of $\dH$. Conversely, any subcategory of $\dH$ admits a $\P$-presentation.

\begin{lemma}\label{projpres}
    Every object $\MM\in\dH$ admits a $\P$-presentation, and consequently every subcategory $\M\subseteq\dH$ does as well.
\end{lemma}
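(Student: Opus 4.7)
The plan is to build $\SS$ as an iterated extension obtained from successive right $\P$-approximations, assembled via the octahedral axiom into a single $(d+1)$-term object. The key preparatory observation is that taking right $\P$-approximations preserves $\D^{\leq 0}$: if $\PP \to \NN$ is a right $\P$-approximation with $\NN \in \D^{\leq 0}$ and $\MM' \to \PP \to \NN \to \MM'[1]$ is the resulting triangle, then $\MM' \in \D^{\leq 0}$. This follows by applying $\Hom(\P,-)$ to shifts of the triangle and combining three facts: the approximation property gives surjectivity of $\Hom(\P,\PP) \to \Hom(\P,\NN)$, the presilting property gives $\Hom(\P,\PP[i]) = 0$ for $i > 0$, and $\NN \in \D^{\leq 0}$ gives $\Hom(\P,\NN[i]) = 0$ for $i > 0$, which together force $\Hom(\P,\MM'[j]) = 0$ for $j \geq 1$.

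Setting $\MM_0 := \MM$ and iterating produces triangles $\MM_{k+1} \to \PP_k \to \MM_k \to \MM_{k+1}[1]$ with $\PP_k \in \P$ and $\MM_k \in \D^{\leq 0}$ for all $k \geq 0$. I then build $\SS_k \in \P^{[0,k]}$ by induction on $k$, starting from $\SS_0 := \PP_0$, for which the triangle $\MM_1 \to \SS_0 \to \MM \to \MM_1[1]$ is already in hand. For the inductive step, I apply the octahedral axiom to the composable morphisms $\PP_k[k-1] \to \MM_k[k-1] \to \SS_{k-1}$, where the first comes from the $(k-1)$-fold shift of the $k$-th triangle and the second from the inductive hypothesis. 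Letting $\SS_k$ be the cone of the composite, the octahedron produces both a triangle $\SS_{k-1} \to \SS_k \to \PP_k[k] \to \SS_{k-1}[1]$ (placing $\SS_k$ in $\P^{[0,k-1]} * \P[k] = \P^{[0,k]}$) and a triangle
\[
\MM_{k+1}[k] \to \SS_k \to \MM \to \MM_{k+1}[k+1],
\]
extending the inductive hypothesis to stage $k$. Taking $k = d$ gives $\SS := \SS_d \in \P^{[0,d]}$.

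To identify $H^{[-d+1,0]}(\SS) \cong \MM$, I observe that $\MM_{d+1} \in \D^{\leq 0}$ implies $\MM_{d+1}[d] \in \D^{\leq -d}$, while $\MM \in \dH \subseteq \D^{\geq -d+1}$, so the triangle $\MM_{d+1}[d] \to \SS \to \MM \to \MM_{d+1}[d+1]$ is precisely the truncation triangle of $\SS$ at degree $-d+1$. Uniqueness of truncation gives $\sigma^{\geq -d+1}(\SS) \cong \MM$, and since $\SS \in \P^{[0,d]} \subseteq \D^{\leq 0}$ yields $\sigma^{\leq 0}(\SS) = \SS$, we conclude $H^{[-d+1,0]}(\SS) \cong \MM$. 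For the subcategory statement, pick a $\P$-presentation $\SS_\MM$ for each $\MM \in \M$ and set $\S := \add\{\SS_\MM \mid \MM \in \M\}$; since $\P^{[0,d]}$ is closed under direct summands, $\S$ is $(d+1)$-term, and $\add H^{[-d+1,0]}(\S) = \M$ follows from $\M = \add\M$ together with additivity of $H^{[-d+1,0]}$. The main obstacle will be setting up the octahedral induction correctly—threading the morphisms at each stage so that both the fiber $\MM_{k+1}[k]$ and the cofiber $\MM$ of the stage-$k$ triangle come out as claimed; once this bookkeeping is in place, the identification via the truncation triangle is routine.
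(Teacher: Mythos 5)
Your proposal is correct and follows essentially the same route as the paper's proof: iterated right $\P$-approximations whose fibers stay in $\D^{\leq 0}$ (by the same surjectivity-plus-presilting argument), assembled by the octahedral axiom into an object of $P_0 * P_1[1] * \cdots * P_d[d] \subseteq \P^{[0,d]}$ whose truncation triangle identifies $H^{[-d+1,0]}(\SS)$ with $\MM$. The only cosmetic difference is that you run the octahedral assembly as an explicit induction on $k$, whereas the paper takes the cocone of the single composite $h_d[d]\circ\cdots\circ h_0$ and cites the octahedral axiom once; the bookkeeping you flag as the main obstacle does check out as you set it up.
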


\begin{proof}
    Let $\MM\in\dH$. Since $\P$ is contravariantly finite in $\D$, there exist triangles
    \[\MM_{i+1}\to P_{i}\xrightarrow{g_i} \MM_{i}\xrightarrow{h_i} \MM_{i+1}[1],\ 0\leq i\leq d,\]
    where $\MM_0=\MM$ and each $g_i$ is a right $\P$-approximation of $\MM_{i}$. Applying $\Hom(P,-)$ for $P\in\P$ to these triangles, we see that all $\MM_i$ ($0\le i\le d$) lie in $\D^{\le 0}$. Extend the composition $h:=h_d[d]\circ\cdots\circ h_1[1]\circ h_0:\MM\to \MM_{d+1}[d+1]$ to a triangle in $\D$:
    $$\MM_{d+1}[d]\to\SS\to \MM\xrightarrow{h} \MM_{d+1}[d+1].$$
    By the octahedral axiom, we have that 
    $$\SS\in P_0*P_1[1]*\cdots*P_d[d]\in\P^{[0,d]}.$$ 
    Since $\MM_{d+1}[d]\in\D^{\leq 0}[d]=\D^{\leq -d}$ and $\MM\in\dH$, we have $\MM\cong H^{[-d+1,0]}(\SS)$. 
    
    Let $\M$ be a subcategory of $\dH$. For any object $\MM\in\M$, take a $\P$-presentation $\SS_\MM$ of $\MM$. The full subcategory $\S$ of $\K$ consisting of $\SS_\MM$ for all $\MM\in\M$ is a $\P$-presentation of $\M$; indeed, it is enough to show that $\S=\add\S$. This follows directly from $\M=\add\M$, since the functor $H^{[-d+1,0]}$ preserves direct sums and direct summands.
\end{proof}

Let $\S$ be a $(d+1)$-term subcategory of $\K$. We define two associated full subcategories of $\dH$ as follows:
$$\T(\S):=\D_\S^{\leq0}\cap\dH=\{\XX\in\dH\mid \Hom(\S,\XX[i])=0,\ \forall i>0\},$$
$$\F(\S):=\D_\S^{\geq0}[-1]\cap\dH=\{\XX\in\dH\mid \Hom(\S,\XX[i])=0,\ \forall i\leq 0\}.$$

We recall the following notion of $s$-torsion pairs from \cite{AET}.

\begin{definition}[{\cite[Definition~3.1]{AET}}]
    A pair of subcategories $(\T,\F)$ of $\dH$ is called an \emph{$s$-torsion pair}, with $\T$ referred to as the \emph{$s$-torsion class}, if the following hold.
    \begin{enumerate}
        \item $\Hom(\T,\F)=0=\Hom(\T,\F[-1])$.
        \item For any $\XX\in\dH$, there is an extriangle
        $\TT\to\XX\to\FF,$
        where $\TT\in\T$ and $\FF\in\F$.
    \end{enumerate}
\end{definition}

\begin{proposition}\label{rmk:silt to tor}
    Let $\S$ be a $(d+1)$-term silting subcategory of $\K$ that is contravariantly finite in $\D$. Then $(\T(\S),\F(\S))$ is an $s$-torsion pair in $\dH$. Moreover, if two $(d+1)$-term silting subcategories $\S$ and $\S'$ of $\K$ that are contravariantly finite in $\D$ satisfy $\T(\S)=\T(\S')$, then $\S=\S'$. 
\end{proposition}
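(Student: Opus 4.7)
The plan is to exploit \Cref{silt to t-str}, which yields that $(\D_\S^{\leq 0}, \D_\S^{\geq 0})$ is a $t$-structure on $\D$ satisfying the sandwich $\D^{\leq -d} = \D^{\leq 0}[d] \subseteq \D_\S^{\leq 0} \subseteq \D^{\leq 0}$. The $s$-torsion pair claim then splits into the two Hom vanishings and the existence of an approximating extriangle, both reducible to standard $t$-structure manipulations; the uniqueness assertion will follow from a recovery formula expressing $\D_\S^{\leq 0}$ in terms of $\T(\S)$, combined with \Cref{silt to t-str}(1).

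For the Hom vanishings, note that by definition $\F(\S) \subseteq \D_\S^{\geq 0}[-1] = \D_\S^{\geq 1}$, so $\Hom(\T(\S), \F(\S)) = 0$ is immediate from the $t$-structure orthogonality $\Hom(\D_\S^{\leq 0}, \D_\S^{\geq 1}) = 0$, and likewise $\Hom(\T(\S), \F(\S)[-1]) = 0$ since $\F(\S)[-1] \subseteq \D_\S^{\geq 2}$. For the extriangle, given $\XX \in \dH$, I would take the truncation triangle
$$\TT \to \XX \to \FF \to \TT[1]$$
associated with the $t$-structure $(\D_\S^{\leq 0}, \D_\S^{\geq 0})$, so that $\TT \in \D_\S^{\leq 0}$ and $\FF \in \D_\S^{\geq 1}$. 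The main technical step---and the primary source of bookkeeping---is to verify that $\TT$ and $\FF$ both lie in $\dH = \D^{\leq 0} \cap \D^{\geq -d+1}$. Taking right orthogonals in $\D^{\leq -d} \subseteq \D_\S^{\leq 0}$ yields $\D_\S^{\geq 1} \subseteq \D^{\geq -d+1}$, so $\FF \in \D^{\geq -d+1}$, while $\FF \in \D^{\leq 0}$ follows from extension closure of $\D^{\leq 0}$ applied to $\XX$ and $\TT[1] \in \D_\S^{\leq 0}[1] \subseteq \D^{\leq -1}$. Symmetrically, $\TT \in \D_\S^{\leq 0} \subseteq \D^{\leq 0}$, and $\TT \in \D^{\geq -d+1}$ by extension closure of $\D^{\geq -d+1}$ applied to $\FF[-1] \in \D^{\geq -d+2}$ and $\XX \in \D^{\geq -d+1}$, via the rotated triangle $\FF[-1] \to \TT \to \XX \to \FF$.

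For uniqueness, I aim to establish the recovery formula
$$\D_\S^{\leq 0} = \D^{\leq -d} * \T(\S).$$
The inclusion $\supseteq$ is immediate since both $\D^{\leq -d}$ and $\T(\S)$ sit in the extension-closed aisle $\D_\S^{\leq 0}$. Conversely, for $\YY \in \D_\S^{\leq 0} \subseteq \D^{\leq 0}$, applying the truncation of $(\D^{\leq 0}, \D^{\geq 0})$ at degree $-d$ produces a triangle $\YY' \to \YY \to \YY'' \to \YY'[1]$ with $\YY' = \sigma^{\leq -d}(\YY) \in \D^{\leq -d}$ and $\YY'' = \sigma^{\geq -d+1}(\YY) \in \dH$. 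The rotation $\YY \to \YY'' \to \YY'[1] \to \YY[1]$ exhibits $\YY''$ as an extension of objects in $\D_\S^{\leq 0}$ (namely $\YY$ itself and $\YY'[1] \in \D^{\leq -d-1} \subseteq \D_\S^{\leq 0}$), forcing $\YY'' \in \D_\S^{\leq 0} \cap \dH = \T(\S)$. Once the formula is in hand, the hypothesis $\T(\S) = \T(\S')$ yields $\D_\S^{\leq 0} = \D_{\S'}^{\leq 0}$, and \Cref{silt to t-str}(1) delivers $\S = \S'$.
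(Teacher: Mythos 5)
Your proposal is correct, and it takes a genuinely different (more self-contained) route than the paper. The paper's proof is essentially two citations: it invokes \cite[Proposition~3.12]{AET} to conclude that the sandwiched $t$-structure $(\D_\S^{\leq 0},\D_\S^{\geq 0})$ induces an $s$-torsion pair on $\dH$, and \cite[Theorem~3.9]{AET} to recover $\D_\S^{\leq 0}$ from $\T(\S)$ before applying \Cref{silt to t-str}(1). You instead prove both facts directly in this setting: the orthogonality conditions fall out of $\F(\S)\subseteq\D_\S^{\geq 1}$, the approximating extriangle is the $\sigma_\S$-truncation triangle of $\XX$ (with the bookkeeping showing $\TT,\FF\in\dH$ carried out correctly via extension-closure of $\D^{\leq 0}$ and $\D^{\geq -d+1}$ and the inclusion $\D_\S^{\geq 1}\subseteq\D^{\geq -d+1}$), and your recovery formula $\D_\S^{\leq 0}=\D^{\leq -d}\ast\T(\S)$ is precisely the specialization of \cite[Theorem~3.9]{AET} that the paper uses as a black box, proved here by truncating an arbitrary $\YY\in\D_\S^{\leq 0}$ at degree $-d$. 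The trade-off is the usual one: the paper's argument is two lines but leans on external results, while yours is longer but makes the proposition independent of \cite{AET} (modulo \Cref{silt to t-str}). All steps check out against the paper's conventions ($\D^{\leq -d}=\D^{\leq 0}[d]$, $\F(\S)=\D_\S^{\geq 0}[-1]\cap\dH$), so there is no gap.
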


\begin{proof}
    By Proposition~\ref{silt to t-str}, $(\D_\S^{\leq 0},\D_\S^{\geq 0})$ is a $t$-structure on $\D$ with $\D^{\leq 0}[d]\subseteq\D_\S^{\leq 0}\subseteq\D^{\leq 0}$. It then follows from \cite[Proposition~3.12]{AET} that $(\T(\S),\F(\S))$ is an $s$-torsion pair in $\dH$. 
    
    Let $\S$ and $\S'$ be two $(d+1)$-term silting subcategories of $\K$ that are contravariantly finite in $\D$ such that $\T(\S)=\T(\S')$. Then by \cite[Theorem~3.9]{AET}, $\D_\S^{\leq 0}=\D_{\S'}^{\leq 0}$. 
    Hence, by Proposition~\ref{silt to t-str}, we conclude that $\S = \S'$.
\end{proof}

We recall the notion of $s$-factors from \cite{Z}.

\begin{definition}[{\cite[Definition~1.13]{Z}}]
    Let $\X$ be a subcategory of $\dH$, and let $s$ be a positive integer. Suppose there exist extriangles
    \begin{equation}\label{eq:tris}
        \ZZ_i\to \XX_i\to\ZZ_{i-1},\ 1\leq i\leq s,
    \end{equation}
    where $\XX_1, \ldots, \XX_s \in \X$. Then $\ZZ_0$ is called an \emph{$s$-factor} of $\X$. We denote by $\Fac_s(\X)$ the full subcategory of $\dH$ consisting of all $s$-factors of $\X$. For convenience, we set $\Fac_0(\X) := \dH$.
\end{definition}

It follows from the definition that, for any $s \geq 0$, the following inclusion holds:
\begin{equation}\label{eq:subset}
    \X\subseteq\Fac_{s+1}(\X)\subseteq\Fac_s(\X).
\end{equation}
We remark that, in general, $\Fac_s(\X)$ is not necessarily closed under direct summands.

\begin{definition}
    A subcategory $\M$ of $\dH$ is said to be \emph{AIR tilting with respect to its $\P$-presentation} $\S$ if $\S$ is contravariantly finite in $\D$ and $\T(\S)=\Fac_d(\M)$. We say that $\M$ is \emph{AIR tilting} if it admits a $\P$-presentation $\S$ with respect to which it is AIR tilting.
\end{definition}
 
The following is the main result of this section.

\begin{theorem}\label{thm:bi}
    There is a bijection from the set of $(d+1)$-term silting subcategories of $\K$ that are contravariantly finite in $\D$ to the set of AIR tilting subcategories of $\dH$, given by $\S \mapsto \add H^{[-d+1,0]}(\S)$.
\end{theorem}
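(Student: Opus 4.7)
The plan is to prove that the map $\Psi \colon \S \mapsto \add H^{[-d+1,0]}(\S)$ is well-defined (i.e.\ lands in the AIR tilting subcategories of $\dH$), injective, and surjective. Injectivity is immediate from \Cref{rmk:silt to tor}: once well-definedness is known, $\Psi(\S) = \Psi(\S') = \M$ forces $\T(\S) = \Fac_d(\M) = \T(\S')$, hence $\S = \S'$.

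For well-definedness, given a $(d+1)$-term silting $\S$ of $\K$ that is contravariantly finite in $\D$, set $\M := \Psi(\S)$; I must show $\T(\S) = \Fac_d(\M)$. For $\Fac_d(\M) \subseteq \T(\S)$, I first establish $\M \subseteq \T(\S)$: the truncation triangle $\sigma^{\leq -d}\SS \to \SS \to H^{[-d+1,0]}\SS \to (\sigma^{\leq -d}\SS)[1]$ for $\SS \in \S$, combined with the containment $\S \subseteq \P^{[0,d]} \subseteq \D^{\geq -d}$, gives $\Hom(\S, \sigma^{\leq -d}\SS[j]) = 0$ for $j \geq 1$ by t-structure orthogonality, whence $\Hom(\S, H^{[-d+1,0]}\SS[j]) \cong \Hom(\S, \SS[j]) = 0$ for $j > 0$ by the presilting property. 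Then along any chain $\ZZ_i \to \MM_i \to \ZZ_{i-1}$ witnessing a $d$-factor, the long exact sequence of $\Hom(\S,-)$, together with $\MM_i \in \T(\S)$, produces injections $\Hom(\S, \ZZ_{i-1}[j]) \hookrightarrow \Hom(\S, \ZZ_i[j+1])$ for $j > 0$. Chaining these yields $\Hom(\S, \ZZ_0[j]) \hookrightarrow \Hom(\S, \ZZ_d[j+d])$, which vanishes: $\ZZ_d \in \dH \subseteq \D^{\leq 0}$ gives $\ZZ_d[j+d] \in \D^{\leq -d-j}$ while $\S \subseteq \D^{\geq -d}$.

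The converse inclusion $\T(\S) \subseteq \Fac_d(\M)$ I build inductively. Given $\XX \in \T(\S)$, take a right $\S$-approximation $\SS \to \XX$ in $\D$; since $\XX \in \D^{\geq -d+1}$ is t-orthogonal to both $\sigma^{\leq -d}\SS \in \D^{\leq -d}$ and its shift, the isomorphism $\Hom(\SS, \XX) \cong \Hom(H^{[-d+1,0]}\SS, \XX)$ yields a lift $\beta \colon \MM := H^{[-d+1,0]}\SS \to \XX$ with $\MM \in \M$. Completing $\beta$ to a triangle $\ZZ \to \MM \to \XX \to \ZZ[1]$, the containment $\ZZ \in \D^{\geq -d+1}$ follows from extension-closure and $\ZZ \in \T(\S)$ from the long exact sequence together with the approximation property; iterating $d$ times then produces the desired $d$-factor decomposition. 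The main obstacle is verifying $\ZZ \in \D^{\leq 0}$: applied to $\Hom(P,-)$ for $P \in \P$, this reduces to surjectivity of $\Hom(P, \MM) \twoheadrightarrow \Hom(P, \XX)$. The $\S$-approximation yields this only when $P \in \S$; to extend to all of $\P$, I exploit $\P \subseteq \S^{[-d,0]}$ from \Cref{compare} to filter any $P \in \P$ by objects in $\S[j]$ for $-d \leq j \leq 0$. Since both $\MM, \XX \in \T(\S)$ annihilate $\Hom(\S[j], -)$ for $j < 0$, the long exact sequences along the filtration reduce $\P$-surjectivity to the $\S$-surjectivity established above, which combined with $\Hom(P, \SS) \twoheadrightarrow \Hom(P, \MM)$ (from the truncation triangle and orthogonality) yields the required vanishing.

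For surjectivity of $\Psi$, given $\M$ AIR tilting with $\P$-presentation $\S$, I show $\S$ is silting in $\K$. The presilting property follows by reversing the argument for $\M \subseteq \T(\S)$: $\Hom(\SS', \SS[j]) \cong \Hom(\SS', H^{[-d+1,0]}\SS[j])$ for $j > 0$, and the latter vanishes since $H^{[-d+1,0]}\SS \in \M \subseteq \Fac_d(\M) = \T(\S)$. To upgrade presilting to silting, I apply \Cref{prop1} with $\C = \K$ and the silting $\P$; the required inclusion $\K_\S^{\leq 0} \subseteq \K_\P^{\leq 0}$ I expect to follow by combining the identity $\T(\S) = \Fac_d(\M) \subseteq \dH$ with \Cref{lem01} applied to $\P$ in $\K$ and the filtration $\K = \bigcup_n \P^{[-n,n]}$ from \Cref{AI2.23}, reducing any $Y \in \K_\S^{\leq 0}$ to the $\dH$-range where the AIR identity provides the needed control.
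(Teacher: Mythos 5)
Your architecture matches the paper's: well-definedness reduces to the identity $\T(\S)=\Fac_d\bigl(\add H^{[-d+1,0]}(\S)\bigr)$ (the content of \Cref{lem:E-proj}, \Cref{lem:fac} and \Cref{prop3}), injectivity follows from \Cref{rmk:silt to tor}, and surjectivity reduces via \Cref{prop1} to the inclusion $\K_\S^{\leq 0}\subseteq\K_\P^{\leq 0}$ for the $\P$-presentation $\S$ of an AIR tilting subcategory. Most of your inlined arguments for well-definedness are correct, but note one recurring misjustification: you twice derive a vanishing from ``$t$-structure orthogonality'' between $\S\subseteq\D^{\geq -d}$ and an object of $\D^{\leq -d-1}$. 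This is the wrong direction of the axiom ($\Hom(\D^{\leq p},\D^{\geq p+1})=0$, not $\Hom(\D^{\geq p},\D^{\leq p-1})=0$), and in any case \ref{item:P1} only guarantees $\P\subseteq\D^{[m,0]}$ for some $m<0$, so $\S\subseteq\D^{\geq -d}$ may fail (e.g.\ in the non-positive dg setting of \Cref{sec:app}). The vanishings you need are nevertheless true, for a different reason: $\S\subseteq\P^{[0,d]}$ gives $\D^{\leq -d}\subseteq\D^{\leq 0}_\S$, hence $\Hom(\S,W)=0$ for every $W\in\D^{\leq -d-1}$.

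The genuine gap is in surjectivity. You correctly identify that \Cref{prop1} is the tool and that one must prove $\K_\S^{\leq 0}\subseteq\K_\P^{\leq 0}$, but ``I expect to follow by combining\dots'' is not an argument, and the ingredient you point to (\Cref{lem01} applied to $\P$) cannot do the work: that lemma describes $\K_\P^{\leq 0}$, which is exactly the subcategory you do not yet know your object lies in. The argument the paper uses (\Cref{prop2}) is a contradiction via the top cohomology: if $\XX\in\K_\S^{\leq 0}$ but $\XX\notin\D^{\leq 0}$, then \Cref{AI2.23} places $\XX\in\D^{\leq n}$ for some $n$, so there is a largest $m>0$ with $H^m(\XX)\neq 0$; setting $\NN:=H^{[-d+1,0]}(\XX[m])$, one checks $\NN\in\D_\S^{\leq -1}$, so that $\NN\in\T(\S)=\Fac_d(\M)$ while simultaneously $\Hom(\M,\NN)\cong\Hom(\S,\NN)=0$ by \Cref{lem2}; the extriangle $\ZZ\to\MM\to\NN$ furnished by $\NN\in\Fac_d(\M)$ then has zero middle morphism, so $\NN$ is a direct summand of $\ZZ[1]\in\D^{\leq -1}$, forcing $H^0(\NN)=H^m(\XX)=0$, a contradiction. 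This maximal-cohomology splitting argument is the essential idea of the surjectivity direction and is absent from your proposal; without it (or a substitute) the proof is incomplete.
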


We first collect several preliminary results needed for the proof of the theorem.

Let $\SS\in\P^{[0,d]}$. By \ref{item:P1}, there exists an integer $m<0$ such that $\P\subseteq\D^{[m,0]}$, which implies $\SS\in\D^{[m-d,0]}$. Applying the truncation functor, we obtain a triangle
\begin{equation}\label{eq:trunP}
\sigma^{\leq -d}(\SS)\to\SS\to H^{[-d+1,0]}(\SS)\to \sigma^{\leq -d}(\SS)[1],
\end{equation}
where $\sigma^{\leq -d}(\SS)\in\D^{[m-d,-d]}$ and $H^{[-d+1,0]}(\SS)\in\D^{[-d+1,0]}=\dH$. Applying the functor $\Hom(-,\NN)$ to this triangle for any $\NN\in\dH$ gives the following lemma.

\begin{lemma}\label{lem2}
    Let $\SS\in\P^{[0,d]}$. For any $\NN \in \dH$, there are functorial isomorphisms
    $$\Hom(H^{[-d+1,0]}(\SS),\NN[i])\cong\Hom(\SS,\NN[i]),\ i\leq 0,$$
    and a functorial injective homomorphism
    $$\Hom(H^{[-d+1,0]}(\SS),\NN[1])\hookrightarrow\Hom(\SS,\NN[1]).$$
\end{lemma}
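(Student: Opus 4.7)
The plan is to apply the cohomological functor $\Hom(-,\NN[i])$ to the triangle \eqref{eq:trunP} and to read off both conclusions from the resulting long exact sequence. The only input needed beyond the triangle itself is the standard orthogonality supplied by the $t$-structure $(\D^{\leq 0},\D^{\geq 0})$: whenever $\XX\in\D^{\leq p}$ and $\YY\in\D^{\geq p+1}$, one has $\Hom(\XX,\YY)=0$.

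First, I would record the relevant bounds. By construction $\sigma^{\leq -d}(\SS)\in\D^{\leq -d}$, while $\NN\in\dH\subseteq\D^{\geq -d+1}$ gives $\NN[i]\in\D^{\geq -d+1-i}$. Hence $\NN[i]\in\D^{\geq -d+1}$ precisely when $i\leq 0$, so the $t$-structure orthogonality yields
$$\Hom(\sigma^{\leq -d}(\SS),\NN[i])=0\ \text{for all }i\leq 0,$$
and, by the same reasoning applied with $i-1$ in place of $i$,
$$\Hom(\sigma^{\leq -d}(\SS)[1],\NN[i])\cong\Hom(\sigma^{\leq -d}(\SS),\NN[i-1])=0\ \text{for all }i\leq 1.$$

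Next, applying $\Hom(-,\NN[i])$ to \eqref{eq:trunP} produces a long exact sequence containing the segment
$$\Hom(\sigma^{\leq -d}(\SS)[1],\NN[i])\to\Hom(H^{[-d+1,0]}(\SS),\NN[i])\to\Hom(\SS,\NN[i])\to\Hom(\sigma^{\leq -d}(\SS),\NN[i]).$$
For $i\leq 0$, both flanking terms vanish by the vanishings just recorded, so the middle map is an isomorphism; for $i=1$, only the left-hand term is known to vanish, which suffices to conclude that the middle map is injective. Functoriality in $\NN$ is automatic, since the long exact sequence is functorial in its second argument.

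I do not anticipate any genuine obstacle here: once one locates the $t$-structure bounds on the two outer terms of \eqref{eq:trunP} and on $\NN[i]$, the statement reduces to a short piece of index bookkeeping in a single long exact sequence. The only subtle point worth flagging is why the $i=1$ case fails to give an isomorphism, namely that $\Hom(\sigma^{\leq -d}(\SS),\NN[1])$ need not vanish because $\sigma^{\leq -d}(\SS)\in\D^{\leq -d}$ and $\NN[1]\in\D^{\geq -d}$ sit at the borderline of the $t$-structure orthogonality.
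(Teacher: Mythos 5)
Your proposal is correct and is exactly the paper's argument: the paper proves the lemma by applying $\Hom(-,\NN)$ to the truncation triangle \eqref{eq:trunP} and using that $\sigma^{\leq -d}(\SS)\in\D^{\leq -d}$ while $\NN[i]\in\D^{\geq -d+1}$ for $i\leq 0$. Your index bookkeeping, including the explanation of why $i=1$ yields only injectivity, matches the intended (one-line) proof.
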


An object $\MM$ in a subcategory $\X\subseteq \dH$ is called \emph{$\mathbb{E}$-projective} (resp. \emph{$\mathbb{E}$-injective}) if $\mathbb{E}(\MM,\XX)=0$ (resp. $\mathbb{E}(\XX,\MM)=0$) for all $\XX\in\X$.

\begin{lemma}\label{lem:E-proj}
	Let $\S$ be a $(d+1)$-term subcategory of $\K$. Then $\S$ is a presilting subcategory of $\K$ if and only if $\add H^{[-d+1,0]}(\S)\subseteq\T(\S)$. Moreover, in this case, any object of $\add H^{[-d+1,0]}(\S)$ is $\mathbb{E}$-projective in $\T(\S)$.
\end{lemma}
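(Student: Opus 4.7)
My approach is to apply $\Hom(\TT,-[i])$ to the truncation triangle~\eqref{eq:trunP} for $\TT,\SS\in\S$, and to prove that the error terms comparing $\Hom(\TT,\SS[i])$ with $\Hom(\TT,H^{[-d+1,0]}(\SS)[i])$, namely $\Hom(\TT,\sigma^{\leq -d}(\SS)[j])$ for $j=i,i+1$, both vanish whenever $i\geq 1$. Once this is established, the comparison becomes a functorial isomorphism and both directions of the equivalence fall out at once.

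More precisely, from the vanishing
$$\Hom(\TT,\sigma^{\leq -d}(\SS)[i])=0,\qquad \TT,\SS\in\S,\ i\geq 1,$$
the long exact sequence attached to~\eqref{eq:trunP} collapses to a functorial isomorphism
$$\Hom(\TT,\SS[i])\cong\Hom(\TT,H^{[-d+1,0]}(\SS)[i]),\qquad \TT,\SS\in\S,\ i\geq 1.$$
The left-hand side vanishes for all $\TT,\SS\in\S$ and $i\geq 1$ exactly when $\S$ is presilting in $\K$, while the right-hand side vanishes for all such $\TT,i$ (and all $\SS\in\S$) exactly when $H^{[-d+1,0]}(\SS)\in\T(\S)$ for every $\SS\in\S$. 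Since $\T(\S)$ is defined by $\Hom$-vanishing and hence closed under direct summands and finite direct sums, the latter condition is equivalent to $\add H^{[-d+1,0]}(\S)\subseteq\T(\S)$.

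The main technical step is the displayed vanishing, which I plan to handle by cohomological devissage with respect to $(\D^{\leq 0},\D^{\geq 0})$. From $\SS\in\P^{[0,d]}\subseteq\D^{[m-d,0]}$ (using~\ref{item:P1} layer by layer in the filtration $\P^{[0,d]}=\P*\P[1]*\cdots*\P[d]$), a standard truncation argument gives $\sigma^{\leq -d}(\SS)\in\D^{[m-d,-d]}$, and therefore $\sigma^{\leq -d}(\SS)[i]\in\D^{[m-d-i,-d-i]}$ is an iterated extension of pieces of the form $\ZZ[p]$ with $\ZZ\in\H$ and $p\geq d+i\geq d+1$. It therefore suffices to verify $\Hom(\P^{[0,d]},\H[p])=0$ for $p\geq d+1$, which via the same filtration reduces to $\Hom(\P,\H[r])=0$ for $r\geq 1$. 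This last vanishing is immediate from $\H\subseteq\D^{\leq 0}$ and the very definition $\D^{\leq 0}=\{\YY\in\D\mid\Hom(\P,\YY[r])=0\ \forall r>0\}$. I expect this cohomological bookkeeping to be the only delicate point; the rest of the argument is formal.

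Finally, for the moreover clause no new work is required. Lemma~\ref{lem2} already supplies the functorial injection $\Hom(H^{[-d+1,0]}(\SS),\NN[1])\hookrightarrow\Hom(\SS,\NN[1])$, so for any $\NN\in\T(\S)$ the right-hand side vanishes by definition of $\T(\S)$, giving $\mathbb{E}(H^{[-d+1,0]}(\SS),\NN)=0$; hence every object of $\add H^{[-d+1,0]}(\S)$ is $\mathbb{E}$-projective in $\T(\S)$.
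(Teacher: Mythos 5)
Your proposal is correct and follows essentially the same route as the paper: apply $\Hom(\TT,-)$ to the truncation triangle~\eqref{eq:trunP}, show the two terms involving $\sigma^{\leq -d}(\SS)[i]$ and $\sigma^{\leq -d}(\SS)[i+1]$ vanish for $i>0$, deduce the functorial isomorphism $\Hom(\TT,\SS[i])\cong\Hom(\TT,H^{[-d+1,0]}(\SS)[i])$, and settle the moreover clause via the injection from \Cref{lem2}. The only (cosmetic) difference is that you justify the vanishing by devissage of $\sigma^{\leq -d}(\SS)$ into heart pieces, whereas the paper simply invokes $\sigma^{\leq -d}(\SS)\in\D^{\leq -d}\subseteq\D^{\leq 0}_\S$; both reduce to the same computation.
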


\begin{proof}
    For any two objects $\SS,\SS'\in\S$, applying $\Hom(\SS',-)$ to the triangle~\eqref{eq:trunP}, we obtain, for any integer $i$, an exact sequence
    \[\begin{array}{rl}
        & \Hom(\SS',\sigma^{\leq -d}(\SS)[i]) \to \Hom(\SS',\SS[i]) \to \Hom(\SS',H^{[-d+1,0]}(\SS)[i]) \\
        &\to \Hom(\SS',\sigma^{\leq -d}(\SS)[i+1]).
    \end{array}  \]
    Since $\S\subseteq\P^{[0,d]}$, we have $\D^{\leq -d}\subseteq\D_\S^{\leq 0}$. Therefore, when $i>0$, the first and the last terms of the sequence vanish. It follows that there exist isomorphisms 
    $$\Hom(\SS',\SS[i])\cong \Hom(\SS',H^{[-d+1,0]}(\SS)[i]),\ i>0.$$
    Thus, $\S$ is a presilting subcategory of $\K$ if and only if $H^{[-d+1,0]}(\S)\subseteq\T(\S)$. Since $\T(\S)$ is closed under finite direct sums and direct summands, this is equivalent to $\add H^{[-d+1,0]}(\S)\subseteq\T(\S)$.

    Let $\SS\in\S$. For any $\NN\in\T(\S)$, by definition, we have $\Hom(\SS,\NN[1])=0$. Hence, by the injective homomorphism in Lemma~\ref{lem2}, we have $\Hom(H^{[-d+1,0]}(\SS), \NN[1])=0$. Thus, $H^{[-d+1,0]}(\SS)$ is $\mathbb{E}$-projective in $\T(\S)$. Consequently, every object in $\add H^{[-d+1,0]}(\S)$ is $\mathbb{E}$-projective in $\T(\S)$.
\end{proof}

The following lemma is an analogue of \cite[Lemma~2.5]{Z}, and its proof remains valid in the present setting.

\begin{lemma}\label{lem:fac}
    Let $\S$ be a $(d+1)$-term silting subcategory of $\K$. Then the subcategory $\T(\S)$ of $\dH$ is closed under $d$-factors.
\end{lemma}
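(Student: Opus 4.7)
The plan is to show that if $Z_0 \in \dH$ arises from extriangles $Z_k \to X_k \to Z_{k-1}$ for $1 \le k \le d$ with $X_1,\ldots,X_d \in \T(\S)$, then $Z_0 \in \T(\S)$. Since $Z_0 \in \dH$ by the definition of extriangles in $\dH$, the task reduces to verifying that $\Hom(\SS, Z_0[i]) = 0$ for every $\SS \in \S$ and every $i > 0$. I would approach this in two steps.

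First, I would iterate the long exact sequence. Each extriangle lifts to a triangle $Z_k \to X_k \to Z_{k-1} \to Z_k[1]$ in $\D$. Fixing $\SS \in \S$ and applying $\Hom(\SS, -)$, the terms $\Hom(\SS, X_k[j])$ vanish for all $j > 0$ since $X_k \in \T(\S)$. Consequently, for each $i \ge 1$ the two flanking terms vanish in the long exact sequence, yielding
\[
\Hom(\SS, Z_{k-1}[i]) \cong \Hom(\SS, Z_k[i+1]).
\]
Iterating over $k = 1, \ldots, d$ produces $\Hom(\SS, Z_0[i]) \cong \Hom(\SS, Z_d[i+d])$.

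Second, I would show $\Hom(\SS, Z_d[i+d]) = 0$ for $i \ge 1$. Since $\SS \in \P^{[0,d]} = \P * \P[1] * \cdots * \P[d]$, an induction on this $*$-decomposition reduces the claim to checking $\Hom(\P[k], Z_d[i+d]) = 0$ for each $0 \le k \le d$. Rewriting, $\Hom(\P[k], Z_d[i+d]) = \Hom(\P, Z_d[i+d-k])$, and since $Z_d \in \dH \subseteq \D^{\le 0}$, the very definition of $\D^{\le 0} = \D^{\le 0}_\P$ forces $\Hom(\P, Z_d[j]) = 0$ for all $j > 0$. The required bound $i + d - k \ge 1$ holds because $i \ge 1$ and $k \le d$. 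Combining the two steps yields $\Hom(\S, Z_0[i]) = 0$ for every $i > 0$, hence $Z_0 \in \T(\S)$.

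The argument is entirely elementary and I do not foresee a serious obstacle. The point requiring care is the alignment of the two shift ranges: the iteration in the first step runs exactly $d$ times, producing the total shift $i + d$, while the induction on $\SS$ in the second step sweeps over exactly the $d+1$ pieces $\P, \P[1], \ldots, \P[d]$, producing the inequality $i + d - k \ge 1$. The hypothesis that $\S$ is $(d+1)$-term with respect to $\P$ is precisely what makes these two bounds match; in fact the silting hypothesis itself is never used, only the $(d+1)$-term presilting structure, which is consistent with the analogous \cite[Lemma~2.5]{Z}.
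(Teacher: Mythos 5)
Your proof is correct and is essentially the argument the paper relies on (it defers to the proof of \cite[Lemma~2.5]{Z}, which is exactly this dévissage): the long exact sequences from the $d$ defining extriangles shift the problem to $\Hom(\SS,\ZZ_d[i+d])$, which vanishes because $\SS\in\P^{[0,d]}$ and $\ZZ_d\in\dH\subseteq\D^{\le 0}$. Your observation that only the $(d+1)$-term condition, not presilting or silting, is needed is also accurate.
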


We now collect some properties of the $s$-torsion class associated with a silting subcategory.

\begin{proposition}\label{prop3}
	Let $\S$ be a $(d+1)$-term silting subcategory of $\K$ that is contravariantly finite in $\D$. Then the following hold.
    \begin{enumerate}[label=($\alph*$),ref=($\alph*$)]
        \item\label{item:a} For any object $\XX\in\T(\S)$, there exists an extriangle
        $$\ZZ\to H^{[-d+1,0]}(\SS)\to\XX,$$
        where $\SS\in \S$ and $\ZZ\in\T(\S)$.
        \item\label{item:b} $\T(\S)=\Fac_{d}\left(\add H^{[-d+1,0]}(\S)\right)=\Fac_{d+1}\left(\add H^{[-d+1,0]}(\S)\right).$
        \item\label{item:c} $\add H^{[-d+1,0]}(\S)$ coincides with the class of $\mathbb{E}$-projective objects of $\T(\S)$.
    \end{enumerate}
\end{proposition}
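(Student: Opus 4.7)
My plan is to prove \ref{item:a} first, since \ref{item:b} and \ref{item:c} will then follow by standard arguments. For \ref{item:a}, given $\XX\in\T(\S)$, I use contravariant finiteness of $\S$ in $\D$ to pick a right $\S$-approximation $f\colon\SS\to\XX$, with associated triangle $\NN_0\to\SS\xrightarrow{f}\XX\to\NN_0[1]$ in $\D$. A standard diagram chase in $\Hom(\SS',-)$ for $\SS'\in\S$, combining the approximation property of $f$ with $\XX\in\T(\S)$ and the presilting property of $\S$, yields $\NN_0\in\D^{\leq 0}_\S$; by \Cref{silt to t-str}(2) applied to the $(d+1)$-term silting $\S$, this improves to $\NN_0\in\D^{\leq 0}$.

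Next I pass from $\SS$ to $H^{[-d+1,0]}(\SS)$. The truncation triangle $\sigma^{\leq -d}(\SS)\to\SS\xrightarrow{\pi}H^{[-d+1,0]}(\SS)\to\sigma^{\leq -d}(\SS)[1]$, together with the $t$-structure vanishings $\Hom(\sigma^{\leq -d}(\SS),\XX)=\Hom(\sigma^{\leq -d}(\SS)[1],\XX)=0$ (since $\sigma^{\leq -d}(\SS)\in\D^{\leq -d}$ and $\XX,\XX[-1]\in\D^{\geq -d+1}$), shows that $f$ factors uniquely as $\bar f\colon H^{[-d+1,0]}(\SS)\to\XX$ through $\pi$. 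Completing $\bar f$ to a triangle
\[
\ZZ\to H^{[-d+1,0]}(\SS)\xrightarrow{\bar f}\XX\to\ZZ[1]
\]
in $\D$, the long exact cohomology sequence (with $H^j(H^{[-d+1,0]}(\SS))=H^j(\XX)=0$ for $j\notin[-d+1,0]$) immediately gives $H^i(\ZZ)=0$ for $i\leq -d$ and $i\geq 2$; the membership $\ZZ\in\dH$ then reduces to $H^1(\ZZ)=\coker H^0(\bar f)=0$. Since $H^0(\pi)$ is an isomorphism (the fiber $\sigma^{\leq -d}(\SS)$ is concentrated in degrees $\leq -d<0$), surjectivity of $H^0(\bar f)$ reduces to surjectivity of $H^0(f)$, which in turn follows from $H^1(\NN_0)=0$ via the long exact sequence of the original triangle; and $H^1(\NN_0)=0$ is immediate from $\NN_0\in\D^{\leq 0}$.

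Given $\ZZ\in\dH$, the containment $\ZZ\in\T(\S)$ is a long exact sequence computation with $\Hom(\SS',-)$: \Cref{lem:E-proj} places $H^{[-d+1,0]}(\SS)\in\T(\S)$, so together with $\XX\in\T(\S)$ the cases $i\geq 2$ vanish immediately, while for $i=1$ the required surjectivity of $\Hom(\SS',H^{[-d+1,0]}(\SS))\to\Hom(\SS',\XX)$ follows from $f=\bar f\circ\pi$ and the approximation property (any $g\colon\SS'\to\XX$ factors as $f\circ h$, hence as $\bar f\circ(\pi\circ h)$). For \ref{item:b}, set $\M:=\add H^{[-d+1,0]}(\S)$. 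The containment $\Fac_d(\M)\subseteq\T(\S)$ combines $\M\subseteq\T(\S)$ (\Cref{lem:E-proj}) with closure of $\T(\S)$ under $d$-factors (\Cref{lem:fac}). For the reverse direction I iterate \ref{item:a} either $d$ or $d+1$ times, exhibiting any $\XX\in\T(\S)$ as a $d$-factor (respectively $(d+1)$-factor) of $\M$; together with~\eqref{eq:subset} this yields both equalities. For \ref{item:c}, \Cref{lem:E-proj} already gives that $\M$-objects are $\mathbb{E}$-projective in $\T(\S)$. Conversely, if $\XX\in\T(\S)$ is $\mathbb{E}$-projective, \ref{item:a} produces an extriangle $\ZZ\to\MM\to\XX$ classified by an element of $\mathbb{E}(\XX,\ZZ)=0$; it therefore splits, realizing $\XX$ as a direct summand of $\MM\in\M$, so that $\XX\in\M$ by the closure of $\M=\add\M$ under direct summands.

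The main obstacle will be the verification $\ZZ\in\dH$ in \ref{item:a}: the naive fiber in $\D$ of a morphism between objects of $\dH$ need only lie in $\D^{[-d+1,1]}$, and killing the potential cohomology in degree $1$ is precisely where one must route through the auxiliary triangle $\NN_0\to\SS\to\XX$ and invoke \Cref{silt to t-str}(2) to obtain $\NN_0\in\D^{\leq 0}$. Everything else is then either a $t$-structure bookkeeping argument or a direct consequence of \ref{item:a} combined with \Cref{lem:E-proj} and \Cref{lem:fac}.
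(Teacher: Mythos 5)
Your proposal is correct and follows essentially the same route as the paper: part \ref{item:a} is built from a right $\S$-approximation $f\colon\SS\to\XX$, factored through the truncation $\SS\to H^{[-d+1,0]}(\SS)$, with the cone of the induced map serving as $\ZZ$, and parts \ref{item:b} and \ref{item:c} follow by iterating \ref{item:a} and by the splitting argument, exactly as in the paper. The only (cosmetic) difference is in verifying $\ZZ\in\dH$: you run the long exact cohomology sequence and reduce to surjectivity of $H^0(f)$ via $H^1(\NN_0)=0$, whereas the paper reads $\ZZ\in\D^{\leq 0}$ off an octahedral diagram; both hinge on the same input $\NN_0\in\D^{\leq 0}$ from \Cref{silt to t-str}(2).
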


\begin{proof}
    To prove \ref{item:a}, since $\S$ is contravariantly finite in $\D$, there exists a right $\S$-approximation $f:\SS\to\XX$. Extend $f$ to a triangle in $\D$:
	$$\YY\to \SS\xrightarrow{f} \XX\to\YY[1].$$
	For any $\SS'\in\S$, apply the functor $\Hom(\SS',-)$ to the triangle. Since both $\SS$ and $\XX$ belong to $\D^{\leq 0}_{\S}$, we deduce that $\YY\in\D^{\leq 0}_{\S}$. By Proposition~\ref{silt to t-str}, it follows that $\YY \in \D^{\leq 0}$.
	
	By the functorial isomorphism in Lemma~\ref{lem2} for $i=0$, the morphism $f$ factors through the middle morphism in the triangle~\eqref{eq:trunP}. Hence, by the octahedral axiom, we obtain a commutative diagram of triangles
	$$\xymatrix{
	&\sigma^{\leq -d}(\SS)\ar@{=}[r]\ar[d]&\sigma^{\leq -d}(\SS)\ar[d]\\
	\XX[-1]\ar[r]\ar@{=}[d]&\YY\ar[r]\ar[d]&\SS\ar[r]^f\ar[d]&\XX\ar@{=}[d]\\
	\XX[-1]\ar[r]&\ZZ\ar[r]\ar[d]&H^{[-d+1,0]}(\SS)\ar[r]^{\qquad f_0}\ar[d]&\XX\\
	&\sigma^{\leq -d}(\SS)[1]\ar@{=}[r]&\sigma^{\leq -d}(\SS)[1]
	}$$
	Considering the triangle in the third row, it suffices to show that $\ZZ\in\T(\S)$. Since $f$ is a right $\S$-approximation of $\XX$, for any $\SS'\in\S$, the map $\Hom(\SS',f)$ is surjective, which implies that the map $\Hom(\SS',f_0)$ is also surjective. Hence, applying $\Hom(\SS',-)$ to the triangle in the third row and using that both $H^{[-d+1,0]}(\SS)$ and $\XX$ lie in $\T(\S)\subseteq\D^{\leq 0}_{\S}$, we deduce that $\ZZ\in\D^{\leq 0}_{\S}$. It remains to show that $\ZZ \in \dH$. From the triangle in the third row, since $\XX[-1] \in \D^{[-d+2, 1]}$ and $H^{[-d+1,0]}(\SS) \in \D^{[-d+1, 0]}$, it follows that $\ZZ \in \D^{[-d+1, 1]}$. On the other hand, from the triangle in the second column, where $\YY \in \D^{\leq 0}$ and $\sigma^{\leq -d}(\SS)[1]\in\D^{\leq-d-1}$, we deduce $\ZZ \in \D^{\leq 0}$. Combining these yields $\ZZ \in \dH$.

    For \ref{item:b}, applying assertion \ref{item:a} repeatedly yields the inclusion
    \begin{equation}\label{eq:inc1}
        \T(\S) \subseteq \Fac_{d+1}\left(\add H^{[-d+1,0]}(\S)\right).
    \end{equation}
    By Lemma~\ref{lem:E-proj}, we know that $\add H^{[-d+1,0]}(\S) \subseteq \T(\S)$. Since by Lemma~\ref{lem:fac}, $\T(\S)$ is closed under $d$-factors, we have
    \begin{equation}\label{eq:inc2}
    \Fac_d\left(\add H^{[-d+1,0]}(\S)\right) \subseteq \T(\S).
    \end{equation}
    Combining the inclusions \eqref{eq:inc1} and \eqref{eq:inc2}, along with the inclusion~\eqref{eq:subset}, yields the equalities claimed in \ref{item:b}.

    For \ref{item:c}, by Lemma~\ref{lem:E-proj}, it suffices to show that any $\mathbb{E}$-projective object $\XX$ of $\T(\S)$ lies in $\add H^{[-d+1,0]}(\S)$. This follows directly from the extriangle in \ref{item:a}, which splits in this case.
\end{proof}

\begin{remark}\label{rmk:enoughproj}
    By applying Proposition~\ref{prop3} to the case where $\S=\P$, we obtain that $\add H^{[-d+1,0]}(\P)$ coincides with the class of $\mathbb{E}$-projective objects of $\dH$, and that $\dH$ has enough projectives in the sense of \cite[Definition~3.25]{NP}. If, in addition, $\P\subseteq\dH$, then $\P$ itself is the class of $\mathbb{E}$-projective objects of $\dH$.
\end{remark}

To establish a relationship between silting subcategories of $\K$ and AIR tilting objects of $\dH$, we need the following lemma.

\begin{lemma}\label{prop2}
    Let $\S$ be a $(d+1)$-term presilting subcategory of $\K$ that is contravariantly finite in $\D$. If for any $\NN\in\T(\S)$, there exists an extriangle
    $$\ZZ \to \MM \to \NN,$$
    where $\MM \in \add H^{[-d+1,0]}(\S)$, then $\S$ is a silting subcategory of $\K$.
\end{lemma}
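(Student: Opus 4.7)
The plan is to invoke \Cref{prop1} with $\C=\K$: since $\P$ is silting in $\K$, and $\S\subseteq\K$ is presilting, $(d+1)$-term with respect to $\P$, and contravariantly finite in $\K$ (inherited from $\D$ because $\S\subseteq\K$), it suffices to establish the inclusion $\D_\S^{\leq 0}\cap\K\subseteq\D^{\leq 0}\cap\K$. I would argue by contradiction: suppose there exists $Y\in\K$ with $Y\in\D_\S^{\leq 0}$ but $Y\notin\D^{\leq 0}$; by \Cref{AI2.23}, $Y$ is bounded above in the $t$-structure, so a maximal integer $k\geq 1$ with $H^k(Y)\neq 0$ exists, and then $Y\in\D^{\leq k}$.

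The next step exploits the truncation triangle
\[
\sigma^{\leq k-1}(Y)\to Y\to H^k(Y)[-k]\to\sigma^{\leq k-1}(Y)[1].
\]
A preliminary observation, obtained by induction along the filtration of $\P^{[0,d]}$ combined with the defining property of $(\D^{\leq 0},\D^{\geq 0})$, is that $\Hom(\S,\D^{\leq -1})=0$. Applying $\Hom(\S,-)$ to the triangle and using $Y\in\D_\S^{\leq 0}$ together with this vanishing (note that $\sigma^{\leq k-1}(Y)[k+1]\in\D^{\leq -2}$), I expect to deduce both $H^k(Y)\in\T(\S)$ and, crucially, the refined vanishing $\Hom(\S,H^k(Y))=0$.

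Finally, set $N:=H^k(Y)\in\T(\S)$ and apply the hypothesis to obtain an extriangle $Z\to M\to N$ in $\dH$ with $M\in\add H^{[-d+1,0]}(\S)$. By \Cref{lem2} (with $i=0$), $\Hom(M,N)\cong\Hom(\S,N)=0$, so the middle morphism of the extriangle is zero; a standard triangulated-category argument then yields $Z\cong M\oplus N[-1]$ in $\D$. Since $\dH$ is closed under direct summands, $N[-1]\in\dH$; but $N\in\H$ forces $N[-1]\in\D^{[1,1]}$, whose intersection with $\dH=\D^{[-d+1,0]}$ is the zero subcategory, so $N=0$, contradicting $H^k(Y)\neq 0$. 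The main obstacle is spotting that the top-degree cohomology $H^k(Y)$ is the right test object to feed into the hypothesis, and recognizing that the refined vanishing $\Hom(\S,H^k(Y))=0$ (not merely $H^k(Y)\in\T(\S)$) is precisely what forces the resulting extriangle to split in a way incompatible with membership in $\dH$.
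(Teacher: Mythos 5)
Your overall strategy is the same as the paper's (reduce to \Cref{prop1} with $\C=\K$, take the top nonvanishing cohomology of a hypothetical $Y\in\D_\S^{\leq 0}\cap\K$ outside $\D^{\leq 0}$, and feed a truncation of $Y[k]$ into the hypothesis to force a splitting incompatible with $\dH$), but there is a genuine gap in the middle step. Your ``preliminary observation'' $\Hom(\S,\D^{\leq -1})=0$ is false for $d\geq 2$. From $\S\subseteq\P^{[0,d]}$ one only gets $\Hom(\P[j],\D^{\leq -1-j})=0$ for $0\leq j\leq d$, hence $\Hom(\S,\D^{\leq -d-1})=0$; the induction along the filtration of $\P^{[0,d]}$ breaks at the first shift. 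Concretely, with $\D=D^b(\mod A)$, $\P=\add A$, $d=2$ and $A[2]\in\S$, the object $M[2]$ lies in $\D^{\leq -2}\subseteq\D^{\leq -1}$ for any module $M$, yet $\Hom(A[2],M[2])=\Hom(A,M)\neq 0$ in general. As a result, in your long exact sequence the error term $\Hom\bigl(\S,\sigma^{\leq k-1}(Y)[k+1+i]\bigr)$ lies in $\Hom(\S,\D^{\leq -2-i})$, which is only guaranteed to vanish for $i\geq d-1$; so neither the refined vanishing $\Hom(\S,H^k(Y))=0$ (the case $i=0$) nor, for $d\geq 3$, the membership $H^k(Y)\in\T(\S)$ actually follows. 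Your argument is correct precisely when $d=1$.

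The paper's proof repairs exactly this point by choosing a different test object: instead of the single top cohomology $H^k(Y)$, it takes the $d$-window truncation $\NN=H^{[-d+1,0]}(Y[k])$. Then the complementary piece $\sigma^{\leq -d}(Y[k])$ lies in $\D^{\leq -d}$, and since $\S\subseteq\P^{[0,d]}$ gives $\D^{\leq -d}\subseteq\D_\S^{\leq 0}$, its shift by $1$ lies in $\D_\S^{\leq -1}$; combined with $Y[k]\in\D_\S^{\leq -1}$ this yields $\NN\in\D_\S^{\leq -1}$, i.e.\ both $\NN\in\T(\S)$ and $\Hom(\S,\NN)=0$ at once. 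The splitting argument then gives $H^0(\NN)=0$, while $H^0(\NN)=H^k(Y)\neq 0$. Your final splitting step and the reduction to \Cref{prop1} are fine; the fix is to widen your truncation window from one degree to $d$ degrees.
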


\begin{proof}
    By applying Proposition~\ref{prop1} to the case $\C=\K$, we see that it suffices to show that $\D_\S^{\leq 0}\cap \K\subseteq \D^{\leq 0}$. We argue by contradiction. Suppose that there exists an object $\XX \in \D_\S^{\leq 0} \cap \K$ such that $\XX \notin \D^{\leq 0}$. Since $\XX \in \K$, by Lemma~\ref{AI2.23}, there exists a positive integer $n$ such that $\XX\in\P^{[-n,n]}\subseteq\D^{\leq n}$. Hence, there exists a positive integer $m$ such that $H^m(\XX) \neq 0$ and $H^i(\XX) = 0$ for all $i > m$. Let $\NN = H^{[-d+1,0]}(\XX[m])\in\dH$, and consider the truncation of $\XX[m]$. Then there exists a triangle in $\D$:
    $$\sigma^{\leq -d}(\XX[m])\to \XX[m]\to \NN\to \sigma^{\leq -d}(\XX[m])[1].$$
    Since $\XX\in\D_\S^{\leq 0}$, we have $\XX[m]\in\D_\S^{\leq -1}$. Since $\S\subseteq\P^{[0,d]}$, we have $\D^{\leq -d}\subseteq\D_\S^{\leq 0}$ and hence $\sigma^{\leq -d}(\XX[m])[1]\in\D_\S^{\leq -1}$. It follows that, by the above triangle, $\NN\in\D_\S^{\leq -1}$. Therefore, $\NN \in \T(\S)$, and by Lemma~\ref{lem2}, we have $\Hom(H^{[-d+1,0]}(\S), \NN) = 0$. Now, by assumption, there exists a triangle in $\D$:
    $$\ZZ\to\MM\to\NN\to\ZZ[1],$$
    where $\MM \in \add H^{[-d+1,0]}(\S)$ and $\ZZ \in \dH$. Then the middle morphism is zero, implying that $\NN$ is a direct summand of $\ZZ[1]$, and hence $H^0(\NN) = 0$. However, $H^0(\NN) = H^m(\XX) \neq 0$, yielding a contradiction. Therefore, $\D_\S^{\leq 0}\cap \K\subseteq \D^{\leq 0}$.
\end{proof}

\begin{proposition}\label{prop:equiv}
    Let $\M\subseteq\dH$ and $\S$ be a $\P$-presentation of $\M$ that is contravariantly finite in $\D$. Then $\S$ is a silting subcategory of $\K$ if and only if $\M$ is an AIR tilting subcategory of $\dH$ with respect to $\S$.
\end{proposition}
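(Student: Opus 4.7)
The plan is to show both implications by assembling the preceding lemmas, with \Cref{prop3} supplying the ``only if'' direction almost for free and the criterion in \Cref{prop2} handling the ``if'' direction.

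For the ``only if'' direction, I would simply invoke \Cref{prop3}\ref{item:b}: if $\S$ is a $(d+1)$-term silting subcategory of $\K$ that is contravariantly finite in $\D$, then $\T(\S)=\Fac_d(\add H^{[-d+1,0]}(\S))$. Since $\S$ is by hypothesis a $\P$-presentation of $\M$, we have $\M=\add H^{[-d+1,0]}(\S)$, and the equality $\T(\S)=\Fac_d(\M)$ is exactly the condition required for $\M$ to be AIR tilting with respect to $\S$ (the contravariant finiteness of $\S$ in $\D$ being a standing assumption).

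For the ``if'' direction, assume $\T(\S)=\Fac_d(\M)$. First, the inclusion $\M\subseteq\Fac_d(\M)=\T(\S)$ provided by~\eqref{eq:subset} gives $\add H^{[-d+1,0]}(\S)\subseteq\T(\S)$, so \Cref{lem:E-proj} shows that $\S$ is presilting. Next, for any $\NN\in\T(\S)=\Fac_d(\M)$, the definition of a $d$-factor supplies extriangles $\ZZ_i\to\XX_i\to\ZZ_{i-1}$ with $\XX_i\in\M$ for $1\leq i\leq d$ and $\ZZ_0=\NN$; the extriangle with $i=1$ already provides the data demanded by \Cref{prop2}, namely an extriangle $\ZZ_1\to\XX_1\to\NN$ with $\XX_1\in\M=\add H^{[-d+1,0]}(\S)$. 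Applying \Cref{prop2} to the presilting, contravariantly finite $(d+1)$-term subcategory $\S$ then yields that $\S$ is silting in $\K$.

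No genuine obstacle arises: all the substantive content has been isolated into \Cref{prop3} (the $(d+1)$-step resolution of objects in $\T(\S)$), \Cref{lem:E-proj} (the presilting criterion $\add H^{[-d+1,0]}(\S)\subseteq\T(\S)$), and \Cref{prop2} (the upgrade from presilting to silting using a single covering extriangle). The only delicate point is recognising that the very first extriangle in a $d$-factor presentation of an arbitrary $\NN\in\T(\S)$ is enough to feed \Cref{prop2}; this is immediate from the definition of $\Fac_d(\M)$.
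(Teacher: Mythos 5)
Your proposal is correct and follows essentially the same route as the paper: the ``only if'' direction is \Cref{prop3}\ref{item:b} applied to $\M=\add H^{[-d+1,0]}(\S)$, and the ``if'' direction first deduces presilting from $\M\subseteq\Fac_d(\M)=\T(\S)$ via \Cref{lem:E-proj} and then feeds the first extriangle of a $d$-factor presentation of each $\NN\in\T(\S)$ into \Cref{prop2}. No discrepancies to report.
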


\begin{proof}
    The ``only if" part follows directly from Proposition~\ref{prop3}\ref{item:b}. For the ``if" part, since $\M\subseteq\Fac_d(\M)=\T(\S)$ by Lemma~\ref{lem:E-proj}, $\S$ is a presilting subcategory of $\K$. For any object $\NN\in\T(\S)=\Fac_d(\M)$, there exists an extriangle
    $$\ZZ\to\MM\to\NN,$$
    with $\MM\in\M$ and $\ZZ\in\Fac_{d-1}(\M)$. Hence, by Lemma~\ref{prop2}, $\S$ is silting.
\end{proof}

We are now ready to prove Theorem~\ref{thm:bi}.

\begin{proof}[Proof of Theorem~\ref{thm:bi}]
    Let $\S$ be a $(d+1)$-term silting subcategory of $\K$ that is contravariantly finite in $\D$. By definition, we have that $\S$ is a $\P$-presentation of $\add H^{[-d+1,0]}(\S)$. Then, by Proposition~\ref{prop:equiv}, $\add H^{[-d+1,0]}(\S)$ is an AIR tilting subcategory of $\dH$. Therefore, the map is well-defined.

    To prove the injectivity of the map, suppose that there exist two $(d+1)$-term silting subcategories $\S$ and $\S'$ of $\K$ that are contravariantly finite in $\D$ and such that $\add H^{[-d+1,0]}(\S) = \add H^{[-d+1,0]}(\S')$. Then, by Proposition~\ref{prop3}\ref{item:b}, we obtain \[\T(\S)=\Fac_d\left(\add H^{[-d+1,0]}(\S)\right)=\Fac_d\left(\add H^{[-d+1,0]}(\S')\right)=\T(\S').\] Therefore, by Proposition~\ref{rmk:silt to tor}, it follows that $\S=\S'$.
    
    Finally, the surjectivity of the map follows immediately from Proposition~\ref{prop:equiv}.
\end{proof}

An immediate consequence of the theorem is the uniqueness of $\P$-presentations of AIR tilting subcategories.

\begin{corollary}\label{S=S'}
    Let $\M$ be a subcategory of $\dH$. If $\M$ is AIR tilting with respect to both its $\P$-presentations $\S$ and $\S'$, then $\S=\S'$.
\end{corollary}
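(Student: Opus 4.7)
The plan is to apply the injectivity of the bijection in \Cref{thm:bi} directly. Observe that the hypothesis ``$\M$ is AIR tilting with respect to $\S$'' packages together two pieces of information: first, that $\S$ is a $\P$-presentation of $\M$, which by definition gives $\add H^{[-d+1,0]}(\S)=\M$; and second, that $\S$ is contravariantly finite in $\D$ and satisfies $\T(\S)=\Fac_d(\M)$. The same applies to $\S'$.

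The key intermediate step is to upgrade $\S$ and $\S'$ from merely $(d+1)$-term subcategories to $(d+1)$-term \emph{silting} subcategories of $\K$ that are contravariantly finite in $\D$. This is exactly the content of the ``if'' direction of \Cref{prop:equiv}: since $\M$ is AIR tilting with respect to its $\P$-presentation $\S$ (and $\S$ is contravariantly finite in $\D$), the proposition yields that $\S$ is silting in $\K$. The same reasoning applies to $\S'$.

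Having placed both $\S$ and $\S'$ in the source of the map $\S\mapsto\add H^{[-d+1,0]}(\S)$ of \Cref{thm:bi}, the conclusion follows from the two equalities $\add H^{[-d+1,0]}(\S)=\M=\add H^{[-d+1,0]}(\S')$ together with the injectivity of that map, which was established in the proof of \Cref{thm:bi} via \Cref{prop3}\ref{item:b} and \Cref{rmk:silt to tor}. There is no real obstacle here: the corollary is a bookkeeping statement that extracts the uniqueness of the preimage from the already-proven bijection. The only point requiring a sentence of care is verifying that the definition of ``AIR tilting with respect to $\S$'' supplies all the hypotheses needed to invoke \Cref{prop:equiv}, namely the contravariant finiteness of $\S$ in $\D$.
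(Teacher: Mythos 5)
Your proof is correct and follows exactly the same route as the paper's own argument: apply \Cref{prop:equiv} to conclude that both $\S$ and $\S'$ are $(d+1)$-term silting subcategories of $\K$ that are contravariantly finite in $\D$ with $\add H^{[-d+1,0]}(\S)=\M=\add H^{[-d+1,0]}(\S')$, then invoke the injectivity of the map in \Cref{thm:bi}. No issues.
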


\begin{proof}
    By Proposition~\ref{prop:equiv}, both $\S$ and $\S'$ are $(d+1)$-term silting subcategories of $\K$ that are contravariantly finite in $\D$ and such that $\add H^{[-d+1,0]}(\S)=\M=\add H^{[-d+1,0]}(\S')$. Therefore, Theorem~\ref{thm:bi} implies $\S=\S'$.
\end{proof}

For any subcategory $\M$ of $\dH$, we define the following full subcategory of $\dH$:
$$\M^{\bot_{\leq0}}:=\{\NN\in\dH\mid\Hom(\M,\NN[i])=0,\ \forall i\leq 0 \}.$$
As a consequence of Theorem~\ref{thm:bi}, we have the following result.

\begin{corollary}\label{cor:injmap}
    There is an injective map $\M\mapsto (\Fac_d(\M),\M^{\bot_{\leq0}})$ from the set of AIR tilting subcategories of $\dH$ to the set of $s$-torsion pairs in $\dH$.
\end{corollary}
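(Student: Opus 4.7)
The plan is to identify the pair $(\Fac_d(\M), \M^{\bot_{\leq 0}})$ with the $s$-torsion pair $(\T(\S), \F(\S))$ associated with a $\P$-presentation $\S$ of $\M$, and then to deduce injectivity from \Cref{rmk:silt to tor}. First, given an AIR tilting subcategory $\M$ with $\P$-presentation $\S$, \Cref{prop:equiv} tells us that $\S$ is a $(d+1)$-term silting subcategory of $\K$ that is contravariantly finite in $\D$, and then \Cref{rmk:silt to tor} produces the $s$-torsion pair $(\T(\S),\F(\S))$ in $\dH$. By the very definition of AIR tilting we already have $\T(\S)=\Fac_d(\M)$, so only the first component agrees with the prescription on the nose.

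The key additional point is to verify $\F(\S)=\M^{\bot_{\leq 0}}$. Since $\M=\add H^{[-d+1,0]}(\S)$, \Cref{lem2} gives, for every $\SS\in\S$, every $\NN\in\dH$, and every $i\leq 0$, a functorial isomorphism
$$\Hom(H^{[-d+1,0]}(\SS),\NN[i])\cong \Hom(\SS,\NN[i]).$$
Intersecting the resulting vanishing conditions over all $i\leq 0$ and all $\SS\in\S$ (and using that $\M$ is the additive closure of $H^{[-d+1,0]}(\S)$), we conclude $\F(\S)=\M^{\bot_{\leq 0}}$. Hence the assignment $\M\mapsto (\Fac_d(\M),\M^{\bot_{\leq 0}})=(\T(\S),\F(\S))$ indeed takes values in the set of $s$-torsion pairs in $\dH$.

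For injectivity, suppose $\M$ and $\M'$ are AIR tilting subcategories with $\P$-presentations $\S$ and $\S'$ respectively, and with the same image under the map. Comparing first components yields $\T(\S)=\Fac_d(\M)=\Fac_d(\M')=\T(\S')$. By \Cref{rmk:silt to tor} this forces $\S=\S'$, and therefore $\M=\add H^{[-d+1,0]}(\S)=\add H^{[-d+1,0]}(\S')=\M'$.

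I do not expect a real obstacle here: the substantive work is already packaged in \Cref{thm:bi}, \Cref{prop:equiv} and \Cref{rmk:silt to tor}, and the only genuinely new ingredient is the identification $\F(\S)=\M^{\bot_{\leq 0}}$ via \Cref{lem2}, which is a direct consequence of the truncation triangle \eqref{eq:trunP}. Note that the argument did not invoke \Cref{S=S'}; uniqueness of the $\P$-presentation is, in fact, recovered a posteriori from this corollary via its first component.
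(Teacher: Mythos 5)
Your proposal is correct and follows essentially the same route as the paper: reduce to the $s$-torsion pair $(\T(\S),\F(\S))$ of \Cref{rmk:silt to tor} via a $\P$-presentation $\S$ (silting by \Cref{prop:equiv}), identify $\T(\S)=\Fac_d(\M)$ and $\F(\S)=\M^{\bot_{\leq 0}}$ using \Cref{lem2}, and deduce injectivity from the uniqueness statement in \Cref{rmk:silt to tor}. The only cosmetic difference is that the paper cites \Cref{prop3}(b) for the first identification where you invoke the definition of AIR tilting directly; both are valid.
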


\begin{proof}
    By Theorem~\ref{thm:bi} and Proposition~\ref{rmk:silt to tor}, it suffices to show that for any AIR tilting subcategory $\M$ of $\dH$ with respect to its $\P$-presentation $\S$, we have $\T(\S)=\Fac_d(\M)$ and $\F(\S)=\M^{\bot_{\leq0}}$. The first equality follows from Proposition~\ref{prop3}\ref{item:b}, and the second follows from the isomorphisms in Lemma~\ref{lem2}.
\end{proof}

\section{Quasi-tilting subcategories of extended hearts}\label{sec:3}

In this section, we introduce the notion of quasi-tilting subcategories of the extended heart $\dH$, which generalizes AIR-tilting subcategories, and we study their basic properties.

\begin{definition}\label{def:quasitilting}
    A subcategory $\M$ of $\dH$ is called \emph{quasi-tilting} if
    \begin{enumerate}[label=(QT\arabic*), ref=(QT\arabic*)]
        \item\label{item:QT1} every object of $\M$ is $\mathbb{E}$-projective in $\Fac_d(\M)$, and 
        \item\label{item:QT2} $\Fac_d(\M)=\Fac_{d+1}(\M)$.
    \end{enumerate}
\end{definition}

\begin{proposition}\label{silisquasitil}
    Every AIR tilting subcategory of $\dH$ is quasi-tilting. 
\end{proposition}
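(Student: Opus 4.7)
The plan is to reduce the two quasi-tilting axioms \ref{item:QT1} and \ref{item:QT2} directly to the structural statements already collected in \Cref{prop3}, since the definition of AIR tilting was crafted precisely so that those results become available. Let $\M$ be an AIR tilting subcategory of $\dH$ with respect to a $\P$-presentation $\S$. By definition of $\P$-presentation, $\M=\add H^{[-d+1,0]}(\S)$, and the AIR tilting hypothesis says that $\S$ is contravariantly finite in $\D$ with $\T(\S)=\Fac_d(\M)$.

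The first step is to upgrade $\S$ from a mere $\P$-presentation to a silting subcategory. By \Cref{prop:equiv}, the combination of these hypotheses forces $\S$ to be a $(d+1)$-term silting subcategory of $\K$ that is contravariantly finite in $\D$, putting $\S$ squarely within the scope of \Cref{prop3}.

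For \ref{item:QT1}, I would invoke \Cref{prop3}\ref{item:c}, which identifies $\add H^{[-d+1,0]}(\S)=\M$ with the full subcategory of $\mathbb{E}$-projective objects of $\T(\S)$. Since $\T(\S)=\Fac_d(\M)$, every object of $\M$ is $\mathbb{E}$-projective in $\Fac_d(\M)$, as required. For \ref{item:QT2}, I would apply \Cref{prop3}\ref{item:b}, which yields
$$\T(\S)=\Fac_d\bigl(\add H^{[-d+1,0]}(\S)\bigr)=\Fac_{d+1}\bigl(\add H^{[-d+1,0]}(\S)\bigr).$$
Substituting $\M=\add H^{[-d+1,0]}(\S)$ on both sides gives $\Fac_d(\M)=\Fac_{d+1}(\M)$.

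Since both conditions follow immediately once \Cref{prop:equiv} is used to produce the silting property of $\S$, there is essentially no technical obstacle; the verification is a one-line substitution into \Cref{prop3}\ref{item:b} and \ref{item:c}. The only subtlety worth flagging is simply the bookkeeping that a $\P$-presentation $\S$ of $\M$ satisfies $\M=\add H^{[-d+1,0]}(\S)$ on the nose, so that the target subcategory in \Cref{prop3} really is $\M$ and not merely an enlargement of it.
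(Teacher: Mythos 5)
Your proof is correct and follows essentially the same route as the paper: the paper invokes \Cref{thm:bi} (which is itself built on \Cref{prop:equiv}) to produce the $(d+1)$-term silting subcategory $\S$ with $\M=\add H^{[-d+1,0]}(\S)$, and then concludes by \Cref{prop3}, exactly as you do via \Cref{prop3}\ref{item:b} and \ref{item:c}.
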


\begin{proof}
    By Theorem~\ref{thm:bi}, there exists a $(d+1)$-term silting subcategory $\S$ of $\K$ such that $\M=\add H^{[-d+1,0]}(\S)$ and $\T(\S)=\Fac_d(\M)$. The proposition then follows immediately from Proposition~\ref{prop3}.
\end{proof}

We now present the main result of this section, which summarizes several properties of $\Fac_d(\M)$ associated with a quasi-tilting subcategory $\M$. Observe that, by definition, $\Fac_d(\M)$ is closed under finite direct sums.

\begin{theorem}\label{thm:qtilt}
    Let $\M$ be a quasi-tilting subcategory of $\dH$. Then the subcategory $\Fac_d(\M)$ is closed under the following operations:
    \begin{itemize}
        \item extensions,
        \item $d$-factors, and
        \item direct summands.
    \end{itemize} 
    Moreover, $\M$ consists of the $\mathbb{E}$-projective objects in $\Fac_d(\M)$, and $\Fac_d(\M)$ has enough projectives.
\end{theorem}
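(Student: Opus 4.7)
The plan is to address the five assertions in the order: enough projectives, identification of $\M$ with the $\mathbb{E}$-projectives of $\Fac_d(\M)$, closure under extensions, closure under $d$-factors, and closure under direct summands. The first two will be direct consequences of the defining axioms, while the three closure properties all reduce to iterated applications of the octahedral axiom in $\D$, combined with (QT2).

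As a preliminary step I would iterate (QT2) to obtain $\Fac_d(\M) = \Fac_s(\M)$ for every $s \geq d$. This produces, for each $\NN \in \Fac_d(\M)$, an arbitrarily long syzygy chain of extriangles $\NN_i \to \MM_i \to \NN_{i-1}$ with $\MM_i \in \M$ and $\NN_i \in \Fac_d(\M)$, and hence shows that $\Fac_d(\M)$ has enough projectives. To identify $\M$ with the $\mathbb{E}$-projective objects, (QT1) gives one containment, while the reverse follows by splitting a first syzygy extriangle of an $\mathbb{E}$-projective object and invoking $\M = \add \M$.

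For closure under extensions I would use a horseshoe argument in $\dH$: given an extriangle $\NN \to \LL \to \NN'$ with $\NN, \NN' \in \Fac_d(\M)$, the $\M$-cover of $\NN'$ lifts to $\LL$ (since its domain is $\mathbb{E}$-projective and $\NN \in \Fac_d(\M)$), allowing the first syzygy extriangles of $\NN$ and $\NN'$ to be combined into a single extriangle $\NN_1 \oplus \NN'_1 \to \MM_\NN \oplus \MM_{\NN'} \to \LL$ with $\NN_1 \oplus \NN'_1 \in \Fac_d(\M)$; iterating $d$ times yields $\LL \in \Fac_d(\M)$. For closure under $d$-factors I would induct on $s$ to show $\Fac_s(\Fac_d(\M)) \subseteq \Fac_d(\M)$: the inductive step is automatic once the case $s = 1$ is handled, and this base case is done by the same octahedral telescoping argument described next.

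The main obstacle is closure under direct summands, and here the argument is most technical. Given $\XX \oplus \YY \in \Fac_d(\M)$, I would first construct a long $\M$-resolution $\NN_i \to \MM_i \to \NN_{i-1}$ (with $\NN_0 = \XX \oplus \YY$ and all $\NN_i \in \Fac_d(\M)$), and then repeatedly apply the octahedral axiom in $\D$. The first step, applied to the composition $\MM_1 \to \XX \oplus \YY \to \XX$ together with the split extriangle $\YY \to \XX \oplus \YY \to \XX$, produces an extriangle $K_1 \to \MM_1 \to \XX$ together with $\NN_1 \to K_1 \to \YY$. Subsequent octahedral applications to $\MM_{i+1} \to \NN_i \to K_i$ yield $K_{i+1} \to \MM_{i+1} \to K_i$ and $\NN_{i+1} \to K_{i+1} \to \YY$. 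After $d$ iterations, the first family of extriangles witnesses $\XX \in \Fac_d(\M)$; the second family ensures inductively that each $K_i$ lies in $\dH$ (since $\NN_i, \YY \in \dH$ and $\dH$ is closed under extensions in $\D$), which is the key technical point needed to promote the relevant $\D$-triangles to extriangles in $\dH$.
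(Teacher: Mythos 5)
Your handling of enough projectives, of the identification of $\M$ with the $\mathbb{E}$-projective objects of $\Fac_d(\M)$, and of closure under extensions is essentially correct and coincides with the paper's argument; the only slip in that part is that the horseshoe construction produces a syzygy that is an \emph{extension} of $\NN'_1$ by $\NN_1$ rather than the direct sum $\NN_1\oplus\NN'_1$, which is harmless since the iteration only needs both ends of that extension to lie in $\Fac_d(\M)$. The two remaining closure properties, however, contain genuine gaps.

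For $d$-factors, the base case you propose, $\Fac_1(\Fac_d(\M))\subseteq\Fac_d(\M)$, is false. Take $\M=\{0\}$: a direct computation gives $\Fac_s(\{0\})=\D^{[-d+1,-s]}$ for $0\le s\le d-1$ and $\Fac_s(\{0\})=\{0\}$ for $s\ge d$, so \ref{item:QT1} and \ref{item:QT2} hold and $\M$ is quasi-tilting; yet $\Fac_1(\Fac_d(\M))=\Fac_1(\{0\})=\dH\cap\dH[1]=\D^{[-d+1,-1]}$ is nonzero whenever $d\ge 2$ and $\H\neq 0$, while $\Fac_d(\M)=\{0\}$. What is true (and what the paper proves) is $\Fac_s(\Fac_d(\M))\subseteq\Fac_s(\M)$ for all $s$, which yields the closure at $s=d$; but for that statement the index $s$ must be tracked on both sides, the inductive step is not automatic, and it requires closure under extensions together with two octahedral rearrangements.

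For direct summands, the telescoping collapses at the second step because the shift accumulates. Applying the octahedral axiom to $\MM_2\to\NN_1\xrightarrow{v}K_1$, where $\NN_1\xrightarrow{v}K_1\to\YY\to\NN_1[1]$ is the control triangle from the first step, produces $\NN_2\to K_2\to\YY[-1]\to\NN_2[1]$, not $\NN_2\to K_2\to\YY$. Concretely, $H^1(K_2)\cong\coker\bigl(H^0(\MM_2)\to H^0(K_1)\bigr)$; since $H^0(\MM_2)\to H^0(\NN_1)$ is surjective, this cokernel equals $\coker H^0(v)\cong H^0(\YY)$, so $K_2\notin\dH$ whenever $H^0(\YY)\neq 0$, and the triangle $K_2\to\MM_2\to K_1$ is then not an extriangle of $\dH$. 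The paper's route avoids this: it proves closure under extensions first, observes that $K_1\oplus\XX$ is an extension of $\XX\oplus\YY$ by $\NN_1$ and hence lies in $\Fac_d(\M)$, and then applies the inductive hypothesis of the statement ``every direct summand of an object of $\Fac_d(\M)$ lies in $\Fac_s(\M)$'' to the summand $K_1$, taking a fresh $\M$-approximation at each stage instead of telescoping the original resolution. You would need to restructure both arguments along these lines.
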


Throughout the rest of this section, $\M$ denotes a quasi-tilting subcategory of $\dH$. We prove the theorem in several steps.

\begin{lemma}\label{lem1}
    For any $\ZZ\in\Fac_d(\M)$, there exists a triangle in $\D$
    \begin{equation}\label{eq:tri1}
        \ZZ'\to\MM'\xrightarrow{f_{\ZZ}}\ZZ\to\ZZ'[1],
    \end{equation}
    where $f_{\ZZ}$ is a right $\M$-approximation of $\ZZ$, and $\ZZ'\in\Fac_d(\M)$.
\end{lemma}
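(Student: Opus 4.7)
The plan is to construct the desired triangle by unpacking the definition of $\Fac_{d+1}(\M)$, using condition \ref{item:QT2} to upgrade $\ZZ$ from a $d$-factor to a $(d+1)$-factor of $\M$, and then to verify the approximation property by invoking the $\mathbb{E}$-projectivity of objects of $\M$ supplied by \ref{item:QT1}.

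By \ref{item:QT2}, the hypothesis $\ZZ \in \Fac_d(\M)$ yields $\ZZ \in \Fac_{d+1}(\M)$, so by the definition of $(d+1)$-factors there exist extriangles
\[
\ZZ_i \to \MM_i \to \ZZ_{i-1}, \quad 1 \leq i \leq d+1,
\]
in $\dH$ with each $\MM_i \in \M$ and $\ZZ_0 = \ZZ$. Setting $\MM' := \MM_1$ and $\ZZ' := \ZZ_1$, the first of these extriangles translates into a triangle in $\D$ of the required shape
\[
\ZZ' \to \MM' \xrightarrow{f_\ZZ} \ZZ \to \ZZ'[1].
\]
Reindexing the remaining $d$ extriangles (those with $2 \leq i \leq d+1$) by $\ZZ'_i := \ZZ_{i+1}$ and $\MM'_i := \MM_{i+1}$ exhibits $\ZZ' = \ZZ_1$ as a $d$-factor of $\M$, hence $\ZZ' \in \Fac_d(\M)$.

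It then remains to check that $f_\ZZ$ is a right $\M$-approximation. For any $\MM \in \M$, applying $\Hom(\MM, -)$ to the triangle above yields the exact sequence
\[
\Hom(\MM, \MM') \to \Hom(\MM, \ZZ) \to \Hom(\MM, \ZZ'[1]) = \mathbb{E}(\MM, \ZZ').
\]
Since $\ZZ' \in \Fac_d(\M)$ and $\MM$ is $\mathbb{E}$-projective in $\Fac_d(\M)$ by \ref{item:QT1}, the rightmost term vanishes, and so every morphism $\MM \to \ZZ$ factors through $f_\ZZ$.

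The argument is essentially an immediate combination of the two quasi-tilting axioms and I do not foresee any real technical obstacle. The conceptual point worth flagging, however, is exactly why both axioms enter: starting from a bare $d$-factor presentation of $\ZZ$ would only place the syzygy $\ZZ'$ in $\Fac_{d-1}(\M)$, which is too weak to apply $\mathbb{E}$-projectivity. The equality $\Fac_d(\M) = \Fac_{d+1}(\M)$ from \ref{item:QT2} is precisely what ensures $\ZZ'$ inherits membership in $\Fac_d(\M)$, and this is what allows the lemma to serve as the inductive engine for the closure properties of $\Fac_d(\M)$ asserted in \Cref{thm:qtilt}.
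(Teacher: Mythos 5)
Your proof is correct and follows essentially the same route as the paper's: both use \ref{item:QT2} to upgrade $\ZZ$ to a $(d+1)$-factor, peel off the first extriangle to produce the triangle with $\ZZ'\in\Fac_d(\M)$, and then apply $\Hom(\MM,-)$ together with the $\mathbb{E}$-projectivity from \ref{item:QT1} to see that $f_\ZZ$ is a right $\M$-approximation. Your extra remark explaining why a bare $d$-factor presentation would be insufficient is a nice clarification but does not change the argument.
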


\begin{proof}
    By \ref{item:QT2}, $\ZZ\in\Fac_d(\M)=\Fac_{d+1}(\M)$. Hence, there exists a triangle
    \[\ZZ'\to\MM'\xrightarrow{f_{\ZZ}}\ZZ\to\ZZ'[1],\]
    where $\ZZ'\in\Fac_d(\M)$ and $\MM'\in\M$. Applying $\Hom(\MM,-)$ for any $\MM\in\M$ to this triangle yields an exact sequence
    $$\Hom(\MM,\MM')\xrightarrow{\Hom(\MM,f_{\ZZ})}\Hom(\MM,\ZZ)\to\mathbb{E}(\MM,\ZZ'),$$
    where the last term vanishes by \ref{item:QT1}. Hence, the map $\Hom(\MM,f_{\ZZ})$ is surjective. Thus, $f_{\ZZ}$ is a right $\M$-approximation of $\ZZ$.
\end{proof}

Consequently, $\Fac_d(\M)$ has enough projectives, and $\M$ coincides with the class of $\mathbb{E}$-projective objects in $\Fac_d(\M)$.

The following Schanuel-type lemma for triangulated categories is taken from \cite[Lemma~2.4]{W2}, in which a sketch of the proof is provided. For completeness, we give a full proof here.

\begin{lemma}\label{lem:Scha}
    Let $\X$ be a full subcategory of an arbitrary triangulated category. Suppose that there are two triangles
    $$\YY\to\XX\xrightarrow{f}\ZZ\to\YY[1],$$
    $$\YY'\to\XX'\xrightarrow{f'}\ZZ\to\YY'[1],$$
    and both $f$ and $f'$ are right $\X$-approximations of $\ZZ$. Then $\YY\oplus\XX'\cong\YY'\oplus\XX$.
\end{lemma}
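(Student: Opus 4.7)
The plan is to extend the morphism $(f, f'): \XX \oplus \XX' \to \ZZ$ to a triangle
\[V \to \XX \oplus \XX' \xrightarrow{(f, f')} \ZZ \to V[1]\]
and then identify the fiber term $V$ in two different ways, once with $\YY \oplus \XX'$ and once with $\XX \oplus \YY'$. Working with $(f, f')$ rather than $(f, -f')$ is harmless, since the two differ by precomposition with the automorphism $\begin{pmatrix} 1 & 0 \\ 0 & -1 \end{pmatrix}$ of $\XX \oplus \XX'$ and hence have the same fiber up to isomorphism.

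The key device is the following matrix factorisation. Since $f$ is a right $\X$-approximation and $\XX' \in \X$, one may choose $h: \XX' \to \XX$ with $fh = f'$; then
\[\phi = \begin{pmatrix} 1 & h \\ 0 & 1 \end{pmatrix}: \XX \oplus \XX' \longrightarrow \XX \oplus \XX'\]
is an automorphism (with inverse obtained by negating $h$), and a direct computation gives $(f, 0) \cdot \phi = (f, fh) = (f, f')$. Taking the direct sum of the given triangle $\YY \to \XX \xrightarrow{f} \ZZ \to \YY[1]$ with the trivial triangle $\XX' \xrightarrow{1} \XX' \to 0 \to \XX'[1]$---and using that a direct sum of triangles is a triangle, together with the identification $\ZZ \oplus 0 \cong \ZZ$---produces a triangle
\[\YY \oplus \XX' \to \XX \oplus \XX' \xrightarrow{(f, 0)} \ZZ \to (\YY \oplus \XX')[1].\]
Replacing the middle arrow $(f, 0)$ by $(f, 0) \cdot \phi = (f, f')$ (and correspondingly composing the first arrow with $\phi^{-1}$) yields an isomorphic triangle whose middle map is $(f, f')$. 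The uniqueness of the fiber of a morphism up to (non-canonical) isomorphism then forces $V \cong \YY \oplus \XX'$.

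Symmetrically, since $f'$ is a right $\X$-approximation and $\XX \in \X$, there exists $h': \XX \to \XX'$ with $f' h' = f$; the analogous lower-triangular automorphism $\psi = \begin{pmatrix} 1 & 0 \\ h' & 1 \end{pmatrix}$ of $\XX \oplus \XX'$ satisfies $(0, f') \cdot \psi = (f'h', f') = (f, f')$. Direct-summing the trivial triangle $\XX \xrightarrow{1} \XX \to 0 \to \XX[1]$ with $\YY' \to \XX' \xrightarrow{f'} \ZZ \to \YY'[1]$ and repeating the substitution trick gives $V \cong \XX \oplus \YY'$. Combining the two identifications yields $\YY \oplus \XX' \cong \XX \oplus \YY'$, as required.

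The main obstacle is purely bookkeeping: making sure the matrix identities, the directions of the maps, and the direct-sum conventions are consistent, and invoking the standard fact that the direct sum of two triangles is itself a triangle. Beyond these routine points, the argument reduces to the single observation that $(f, f')$ differs from $(f, 0)$ (respectively from $(0, f')$) by an automorphism of the source, so all three morphisms have isomorphic fibers.
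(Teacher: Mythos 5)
Your argument is correct, but it follows a different route from the paper's. You realize $\YY\oplus\XX'$ and $\XX\oplus\YY'$ as two fibers of the same morphism $(f,f')\colon\XX\oplus\XX'\to\ZZ$: the factorizations $f'=fh$ and $f=f'h'$ supplied by the two approximation properties give unipotent automorphisms of $\XX\oplus\XX'$ carrying $(f,0)$ and $(0,f')$ to $(f,f')$, and the direct sums of each given triangle with a trivial one identify the fibers of $(f,0)$ and $(0,f')$; uniqueness of the fiber up to isomorphism then finishes the job. All the matrix identities and the column-operation step (replacing a triangle by an isomorphic one via $(\id,\phi^{-1},\id)$) check out, and this is the classical Schanuel argument, needing only that a direct sum of triangles is a triangle. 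The paper instead factors $f$ through $f'$, applies the octahedral axiom to obtain a Mayer--Vietoris-type triangle $\YY\to\XX\oplus\YY'\to\XX'\xrightarrow{g\circ f'}\YY[1]$, and then uses the second approximation property to show the connecting morphism $g\circ f'=g\circ f\circ h$ vanishes, so the triangle splits. Your version trades the octahedral axiom for the weaker direct-sum and cone-uniqueness facts, at the cost of slightly more bookkeeping; the paper's version is shorter once the octahedron is granted. Either proof is acceptable here.
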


\begin{proof}
    Since $f'$ is a right $\X$-approximation of $\ZZ$ and $\XX\in\X$, it follows that $f$ factors through $f'$. Hence by the octahedral axiom, there exists a commutative diagram of triangles
    $$\xymatrix{
    \WW\ar@{=}[r]\ar[d]&\WW\ar[d]\\
    \YY\ar[r]\ar[d]&\XX\ar[r]^{f}\ar[d]&\ZZ\ar@{=}[d]\ar[r]^{g}&\YY[1]\ar[d]\\
    \YY'\ar[d]\ar[r]&\XX'\ar[r]^{f'}\ar[d]&\ZZ\ar[r]&\YY'\\
    \WW[1]\ar@{=}[r]&\WW[1]
    }$$
    It follows that there is a triangle
    \[\YY\to\XX\oplus\YY'\to \XX'\xrightarrow{g\circ f'}\YY[1].\]
    Since $f$ is a right $\X$-approximation of $\ZZ$, there exists a morphism $h:\XX'\to \XX$ such that $f'=f\circ h$. Hence, $g\circ f'=g\circ f\circ h=0$. Consequently, the triangle splits, which implies the desired isomorphism $\YY\oplus\XX'\cong\YY'\oplus\XX$.
\end{proof}

The proof of the following lemma is adapted from the proof of \cite[Proposition~2.4]{W2}, with additional details provided.

\begin{lemma}\label{lem4}
    Let $f$ be a right $\M$-approximation of an object $\ZZ\in\Fac_d(\M)$, and let
    $$\ZZ''\to\MM''\xrightarrow{f}\ZZ\to\ZZ''[1]$$
    be a triangle containing $f$ as one of its morphisms. Then $\ZZ''\in\Fac_d(\M)$.
\end{lemma}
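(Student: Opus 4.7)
The plan is to compare the given right $\M$-approximation $f$ with a controlled one produced by \Cref{lem1}, and then combine them through an octahedral construction. First, I apply \Cref{lem1} to $\ZZ$, obtaining a triangle $\ZZ' \to \MM' \xrightarrow{f_\ZZ} \ZZ \to \ZZ'[1]$ with $\ZZ' \in \Fac_d(\M)$ and $f_\ZZ$ a right $\M$-approximation of $\ZZ$. Then \Cref{lem:Scha}, applied to the right $\M$-approximations $f$ and $f_\ZZ$, yields an isomorphism $\ZZ'' \oplus \MM' \cong \ZZ' \oplus \MM''$. Since $\MM', \MM'' \in \M$ and $\ZZ' \in \Fac_d(\M)$ all lie in $\dH$, and $\dH$ is closed under direct summands (via the cohomological criterion of the $t$-structure), this already gives $\ZZ'' \in \dH$.

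Next, I apply \Cref{lem1} once more, this time to $\ZZ'$, producing a triangle $\ZZ_1 \to \MM_1 \to \ZZ' \to \ZZ_1[1]$ with $\ZZ_1 \in \Fac_d(\M)$. Direct-summing the associated extriangle with the split extriangle $0 \to \MM'' \xrightarrow{\id} \MM''$ and then substituting via the Schanuel isomorphism produces an extriangle in $\dH$
\[ \ZZ_1 \to \MM_1 \oplus \MM'' \xrightarrow{\alpha} \ZZ'' \oplus \MM'. \]
Now apply the octahedral axiom in $\D$ to the composition $\pi\circ\alpha : \MM_1 \oplus \MM'' \to \ZZ'' \oplus \MM' \to \ZZ''$, where $\pi$ is the split projection. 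This yields an object $K$ together with triangles $\ZZ_1 \to K \to \MM' \to \ZZ_1[1]$ and $K \to \MM_1 \oplus \MM'' \to \ZZ'' \to K[1]$ in $\D$. A cohomology long exact sequence check on the first triangle, using $\ZZ_1, \MM' \in \dH$, shows $K$ has cohomology concentrated in $[-d+1,0]$, so $K \in \dH$ and both triangles are extriangles in $\dH$.

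To conclude, I use \ref{item:QT1}: because $\MM' \in \M$ is $\mathbb{E}$-projective in $\Fac_d(\M)$ and $\ZZ_1 \in \Fac_d(\M)$, we have $\mathbb{E}(\MM', \ZZ_1) = 0$, so the extriangle $\ZZ_1 \to K \to \MM'$ splits and $K \cong \ZZ_1 \oplus \MM'$. Both summands lie in $\Fac_{d-1}(\M)$, which is closed under finite direct sums (by direct-summing the defining extriangles), so $K \in \Fac_{d-1}(\M)$. The extriangle $K \to \MM_1 \oplus \MM'' \to \ZZ''$, with $\MM_1 \oplus \MM'' \in \M$, then exhibits $\ZZ''$ as a $d$-factor of $\M$. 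The principal obstacle is that $\Fac_d(\M)$ is not a priori closed under direct summands (that closure is itself part of \Cref{thm:qtilt}), so one cannot simply extract $\ZZ''$ from the containment $\ZZ'' \oplus \MM' \in \Fac_d(\M)$; the octahedral construction together with the $\mathbb{E}$-projectivity of $\MM'$ is precisely what allows the extra summand to be split off.
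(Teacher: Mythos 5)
Your proof is correct and follows essentially the same route as the paper's: compare $f$ with the approximation from \Cref{lem1} via the Schanuel-type \Cref{lem:Scha}, take a second controlled approximation of the resulting object, apply the octahedral axiom, and split off the summand $\MM'$ using its $\mathbb{E}$-projectivity in $\Fac_d(\M)$ from \ref{item:QT1}. The only (harmless) difference is bookkeeping: the paper applies \Cref{lem1} directly to $\ZZ''\oplus\MM'\in\Fac_d(\M)$ and obtains $\ZZ''\in\Fac_{d+1}(\M)$, while you build the approximation of $\ZZ''\oplus\MM'$ by hand from one of $\ZZ'$ and land in $\Fac_d(\M)$, which suffices.
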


\begin{proof}
    By Lemma~\ref{lem1}, there exists a triangle 
    \[\ZZ'\to\MM'\xrightarrow{f_{\ZZ}}\ZZ\to\ZZ'[1]\]
    given in \eqref{eq:tri1}. Then, by Lemma~\ref{lem:Scha}, we have the isomorphism $\ZZ'' \oplus \MM' \cong \ZZ' \oplus \MM''$. Since $\ZZ' \oplus \MM'' \in \Fac_d(\M)$, we apply Lemma~\ref{lem1} again to obtain a triangle
    $$\YY\to\NN\to\ZZ''\oplus\MM'\to\YY[1],$$
    where $\YY\in\Fac_d(\M)$ and $\NN\in\M$. By the octahedral axiom, one obtains a commutative diagram of triangles
    $$\xymatrix{
    &\ZZ''[-1]\ar@{=}[r]\ar[d] &\ZZ''[-1]\ar[d]^{0}\\
    \YY\ar@{=}[d]\ar[r]&\XX\ar[r]\ar[d]&\MM'\ar[r]^{h}\ar[d]&\YY[1]\ar@{=}[d]\\
    \YY\ar[r]&\NN\ar[r]\ar[d]&\ZZ''\oplus\MM'\ar[r]\ar[d]&\YY[1]\\
    &\ZZ''\ar@{=}[r]&\ZZ''
    }$$
    Since $\MM'\in\M$, it follows from \ref{item:QT1} that $\MM'$ is $\mathbb{E}$-projective in $\Fac_d(\M)$. As $\YY\in\Fac_d(\M)$, we deduce that $h=0$. Therefore, $\XX\cong\YY\oplus\MM'\in \Fac_d(\M)$. Consequently, considering the second-column triangle in the diagram, we deduce $\ZZ''\in\Fac_{d+1}(\M)\subseteq\Fac_d(\M)$.
\end{proof}

Now, we complete the proof of Theorem~\ref{thm:qtilt} through the following three propositions. These propositions extend \cite[Propositions~3.3 and 3.7]{W3} and \cite[Proposition~2.2]{W1} from module categories to extended hearts, with their proofs suitably adapted.

\begin{proposition}\label{prop001}
    Let $\XX\to\YY\to\ZZ$ be an extriangle in $\dH$. Then the following statements hold.
    \begin{enumerate}
        \item If both $\XX$ and $\YY$ belong to $\Fac_d(\M)$, then $\ZZ\in\Fac_s(\M)$ for any $s\geq 0$.
        \item If both $\XX$ and $\ZZ$ belong to $\Fac_d(\M)$, then $\YY\in\Fac_s(\M)$ for any $s\geq 0$.
        \item If both $\YY$ and $\ZZ$ belong to $\Fac_d(\M)$ and $\mathbb{E}(\M,\XX)=0$, then $\XX\in\Fac_s(\M)$ for any $s\geq 0$.
    \end{enumerate}
    In particular, $\Fac_d(\M)$ is closed under extensions.
\end{proposition}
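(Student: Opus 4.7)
The plan is to prove (1) by induction on $s$ and then deduce (2) and (3) from (1) by short constructions combining right $\M$-approximations with the octahedral axiom in the ambient triangulated category $\D$. The final assertion that $\Fac_d(\M)$ is closed under extensions is then the case $s=d$ of (2). The main recurring subtlety is verifying that the auxiliary object produced by each construction actually lies in $\dH=\D^{[-d+1,0]}$; in every case one of the two triangles supplied by the construction has its outer terms already in $\dH$, which pins the middle term down by closure of $\dH$ under extensions.

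For (1), I induct on $s$; the case $s=0$ is trivial since $\Fac_0(\M)=\dH$. Given an extriangle $\XX\to\YY\to\ZZ$ with $\XX,\YY\in\Fac_d(\M)$, apply \Cref{lem1} to $\YY$ to obtain an extriangle $\YY'\to\MM\to\YY$ with $\YY'\in\Fac_d(\M)$ and $\MM\in\M$. Extending the composition $\MM\to\YY\to\ZZ$ to a triangle $\WW\to\MM\to\ZZ\to\WW[1]$ in $\D$ and invoking the octahedral axiom yields a second triangle $\YY'\to\WW\to\XX\to\YY'[1]$. Since $\YY',\XX\in\dH$, the latter triangle forces $\WW\in\dH$, so both descend to extriangles in $\dH$. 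By the inductive hypothesis applied to $\YY'\to\WW\to\XX$, we have $\WW\in\Fac_s(\M)$, and then the extriangle $\WW\to\MM\to\ZZ$ shows $\ZZ\in\Fac_{s+1}(\M)$.

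For (2), use \Cref{lem1} to obtain an extriangle $\ZZ'\to\MM\to\ZZ$ with $\ZZ'\in\Fac_d(\M)$ and $\MM\in\M$. The pullback of $\XX\to\YY\to\ZZ$ along $\MM\to\ZZ$ produces a $3\times 3$ diagram in $\dH$ whose middle row is an extriangle $\XX\to\WW\to\MM$ and whose middle column is an extriangle $\ZZ'\to\WW\to\YY$. Since $\XX\in\Fac_d(\M)$ and $\MM\in\M$ is $\mathbb{E}$-projective in $\Fac_d(\M)$ by \ref{item:QT1}, we have $\mathbb{E}(\MM,\XX)=0$; hence the middle row splits, giving $\WW\cong\XX\oplus\MM\in\Fac_d(\M)$, and part (1) applied to the middle column then yields $\YY\in\Fac_s(\M)$.

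For (3), take a right $\M$-approximation $\YY'\to\MM\to\YY$ of $\YY$ from \Cref{lem1}. The hypothesis $\mathbb{E}(\M,\XX)=0$ together with the long exact sequence associated to $\XX\to\YY\to\ZZ$ forces $\Hom(\M,\YY)\to\Hom(\M,\ZZ)$ to be surjective, so the composition $g\colon\MM\to\YY\to\ZZ$ is itself a right $\M$-approximation of $\ZZ$. By \Cref{lem4}, its cocone $\ZZ''$ lies in $\Fac_d(\M)$. Applying the octahedral axiom to the composition $\MM\to\YY\to\ZZ$ then produces an extriangle $\YY'\to\ZZ''\to\XX$ in $\dH$ whose first two terms lie in $\Fac_d(\M)$, and invoking part (1) concludes $\XX\in\Fac_s(\M)$.
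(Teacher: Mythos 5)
Your proofs of (2) and (3) are correct modulo part (1) and are close in spirit to the paper's (a right $\M$-approximation from \Cref{lem1}, the octahedral axiom, and \Cref{lem4}). In fact your proof of (2) — splitting the homotopy pullback extriangle $\XX\to\WW\to\MM$ via $\mathbb{E}(\MM,\XX)=0$ and then feeding the column $\ZZ'\to\WW\to\YY$ into part (1) — is a clean direct reduction where the paper runs a separate induction. The genuine problem is in (1). In the inductive step you form the triangle $\YY'\to\WW\to\XX\to\YY'[1]$ and assert that ``the inductive hypothesis applied to $\YY'\to\WW\to\XX$'' yields $\WW\in\Fac_s(\M)$. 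But the inductive hypothesis of (1) says: if the \emph{first two} terms of an extriangle lie in $\Fac_d(\M)$, then the \emph{third} lies in $\Fac_s(\M)$. In your extriangle the two terms you control are the outer ones, $\YY'$ and $\XX$, and the term you need to control is the middle one, $\WW$. That is precisely statement (2) at level $s$ — which, in your stated organization (``prove (1) by induction, then deduce (2) and (3) from (1)''), is proved only afterwards and only by means of (1). As written, the argument is circular.

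The gap is repairable without changing any of your constructions. Since your proof of $(2)$ at level $s$ uses only $(1)$ at the same level $s$, you can run a simultaneous induction $(1)_0,(2)_0\Rightarrow(1)_1\Rightarrow(2)_1\Rightarrow(1)_2\Rightarrow\cdots$, in which $(1)_{s+1}$ invokes $(2)_s$ to place $\WW$ in $\Fac_s(\M)$; you should say this explicitly. Alternatively, the paper avoids the issue inside (1) altogether by taking right $\M$-approximations of \emph{both} $\XX$ and $\YY$ and applying the $3\times 3$ lemma: this produces an auxiliary extriangle $\XX'\to\YY''\to\ZZ''$ whose \emph{first two} terms lie in $\Fac_d(\M)$ (the term $\YY''$ via \Cref{lem4}, because the induced map onto $\YY$ is again a right $\M$-approximation), so the inductive hypothesis of (1) applies verbatim and (1) closes on itself.
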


\begin{proof}
    Denote by
    $$\XX\xrightarrow{\alpha}\YY\xrightarrow{\beta}\ZZ\xrightarrow{\gamma}\XX[1]$$
    the triangle in $\D$ corresponding to the extriangle $\XX\to\YY\to\ZZ$. 
    
    (1) We proceed by induction on $s$. The starting case $s=0$ is trivial. Assume the assertion holds for $s=l$, and consider $s=l+1$.
    
    By Lemma~\ref{lem1}, there exist two triangles
    $$\XX'\to \MM'_{\XX}\xrightarrow{f_{\XX}} \XX\to\XX'[1],\ \YY'\to \MM'_{\YY}\xrightarrow{f_{\YY}} \YY\to\YY'[1],$$
    where $f_{\XX}$ and $f_{\YY}$ are right $\M$-approximations of $\XX$ and $\YY$, respectively, and $\XX',\YY'\in\Fac_d(\M)$. Starting with the commutative diagram
    $$\xymatrix{
    \MM'_{\XX}\ar[r]^{\left(\begin{smallmatrix}1\\0\end{smallmatrix}\right)\quad\ }\ar[d]_{f_{\XX}}&\MM'_{\XX}\oplus\MM'_{\YY}\ar[d]^{\delta:=\left(\begin{smallmatrix}
        \alpha\circ f_{\XX} & f_{\YY}
    \end{smallmatrix}\right)}\\
    \XX\ar[r]^{\alpha}&\YY
    }$$
    and by the $3\times 3$ Lemma, one obtains a diagram of triangles
    $$\xymatrix{
    \XX[-1]\ar[r]\ar[d]&\YY[-1]\ar[r]\ar[d]&\ZZ[-1]\ar[r]\ar[d]&\XX\ar[d]\\
    \XX'\ar[r]\ar[d]&\YY''\ar[r]\ar[d]&\ZZ''\ar[d]\ar[r]&\XX'[1]\ar[d]\\
    \MM'_{\XX}\ar[r]^{\left(\begin{smallmatrix}1\\0\end{smallmatrix}\right)\quad\ }\ar[d]_{f_{\XX}}&\MM'_{\XX}\oplus\MM'_{\YY}\ar[d]^{\delta}\ar[r]^{\quad\left(\begin{smallmatrix}0&1\end{smallmatrix}\right)}&\MM'_{\YY}\ar[d]\ar[r]^{0}&\MM'_{\XX}[1]\ar[d]\\
    \XX\ar[r]^{\alpha}&\YY\ar[r]^{\beta}&\ZZ\ar[r]^{\gamma}&\XX[1]
    }$$
    Since $f_{\YY}$ is a right $\M$-approximation of $\YY$, the morphism $\delta$ is also a right $\M$-approximation of $\YY$. Therefore, by Lemma~\ref{lem4}, we have $\YY''\in\Fac_d\left(\M\right)$. Considering the triangles in the second row and the third column of the above diagram, it follows that $\ZZ''\in\dH$. Therefore, applying the induction hypothesis to the triangle in the second row yields $\ZZ'' \in \Fac_{l}\left(\M\right)$. Hence, by the triangle in the third column, we conclude that $\ZZ \in \Fac_{l+1}\left(\M\right)$, completing the induction step.

    (2) We still use induction on $s$. The starting case $s=0$ is trivial. Assume that the assertion holds when $s = l$ for some $l \geq 0$. Let us consider the case $s = l + 1$.
    
    By Lemma~\ref{lem1}, there exist triangles
    $$\XX'\to \MM'_{\XX}\xrightarrow{f_{\XX}} \XX\to\XX'[1],\ \ZZ'\to \MM'_{\ZZ}\xrightarrow{f_{\ZZ}} \ZZ\to\ZZ'[1],$$
    where $f_{\XX}$ and $f_{\ZZ}$ are right $\M$-approximations of $\XX$ and $\ZZ$, respectively, and $\XX', \ZZ' \in \Fac_d\left(\M\right)$. Since $\XX \in \Fac_d(\M)$ and $\MM'_{\ZZ} \in \M$, by \ref{item:QT1}, $\mathbb{E}(\MM'_{\ZZ}, \XX) = 0$. Hence, $\gamma \circ f_{\ZZ} = 0$, and there is a commutative diagram
    $$\xymatrix{
    \MM'_{\ZZ}\ar[r]^{0\quad}\ar[d]_{f_{\ZZ}}&\MM'_{\XX}[1]\ar[d]^{f_{\XX}[1]}\\
    \ZZ\ar[r]_{\gamma\quad}&\XX[1]
    }$$
    Applying the $3 \times 3$ Lemma to this provides the following diagram of triangles:
    $$\xymatrix{
    \XX[-1]\ar[r]\ar[d]&\YY[-1]\ar[r]\ar[d]&\ZZ[-1]\ar[r]\ar[d]&\XX\ar[d]\\
    \XX'\ar[r]\ar[d]&\YY''\ar[r]\ar[d]&\ZZ'\ar[d]\ar[r]&\XX'[1]\ar[d]\\
    \MM'_{\XX}\ar[r]\ar[d]_{f_{\XX}}&\MM'_{\XX}\oplus\MM'_{\ZZ}\ar[d]\ar[r]&\MM'_{\ZZ}\ar[d]^{f_{\ZZ}}\ar[r]^{0\quad}&\MM'_{\XX}[1]\ar[d]^{f_{\XX}[1]}\\
    \XX\ar[r]^{\alpha}&\YY\ar[r]^{\beta}&\ZZ\ar[r]^{\gamma\quad}&\XX[1]
    }$$
    Considering the triangle in the second row of the diagram, we see that $\YY'' \in \dH$. Applying the induction hypothesis to this triangle, we obtain $\YY'' \in \Fac_l(\M)$. Hence, by the triangle in the second column, we conclude that $\YY\in\Fac_{l+1}(\M)$, completing the induction step.

    (3) By Lemma~\ref{lem1}, there exists a triangle
    $$\YY'\to \MM'_{\YY}\xrightarrow{f_{\YY}} \YY\to\YY'[1],$$
    where $f_{\YY}$ is a right $\M$-approximation of $\YY$ and $\YY'\in\Fac_d(\M)$. By taking the composition $\beta\circ f_{\YY}$ and applying the octahedral axiom, we obtain a commutative diagram of triangles
    $$\xymatrix{
    &\YY'\ar@{=}[r]\ar[d]&\YY'\ar[d]\\
    \ZZ[-1]\ar[r]\ar@{=}[d]&\NN\ar[r]\ar[d]&\MM'_{\YY}\ar[r]^{\ \beta\circ f_{\YY}}\ar[d]^{f_{\YY}}&\ZZ\ar@{=}[d]\\
    \ZZ[-1]\ar[r]&\XX\ar[r]^{\alpha}\ar[d]&\YY\ar[r]^{\beta}\ar[d]&\ZZ\\
    &\YY'[1]\ar@{=}[r]&\YY'[1]
    }$$
    By \ref{item:QT1}, we have $\mathbb{E}(\M, \YY') = 0$. By assumption, $\mathbb{E}(\M, \XX) = 0$. Therefore, by the triangle in the second column of the above diagram, it follows that $\mathbb{E}(\M, \NN) = 0$. It follows that $\beta\circ f_{\YY}$ is a right $\M$-approximation of $\ZZ$. Hence, by Lemma~\ref{lem4}, we have $\NN\in\Fac_d(\M)$. Thus, by applying part (1) of this proposition to the triangle in the second column of the diagram, we obtain $\XX \in \Fac_s(\M)$ for any $s \geq 0$.
\end{proof}

\begin{proposition}
    Any direct summand of an object of $\Fac_d(\M)$ belongs to $\Fac_s(\M)$ for any $s\geq 0$. In particular, $\Fac_d(\M)$ is closed under direct summands.
\end{proposition}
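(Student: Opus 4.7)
The plan is to argue by induction on $s \geq 0$, taking as the inductive hypothesis at stage $l$ the \emph{full} statement of the proposition at level $l$: every direct summand of every object of $\Fac_d(\M)$ belongs to $\Fac_l(\M)$. The base case $s = 0$ is immediate, since $\Fac_0(\M) = \dH$ and $\dH$ is closed under direct summands.

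For the inductive step $l \to l+1$, let $Z = Z_1 \oplus Z_2 \in \Fac_d(\M)$, with canonical projections $p_i : Z \to Z_i$ and injections $\iota_i : Z_i \to Z$. By \Cref{lem1} there is a right $\M$-approximation $f_Z : M \to Z$ fitting in a triangle $Z' \to M \xrightarrow{f_Z} Z \to Z'[1]$ with $Z' \in \Fac_d(\M)$. A routine verification using the approximation property of $f_Z$ shows that each composite $g_i := p_i \circ f_Z : M \to Z_i$ is a right $\M$-approximation of $Z_i$, and that the block-diagonal $g_1 \oplus g_2 : M \oplus M \to Z$ is a right $\M$-approximation of $Z$. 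Completing each $g_i$ to a triangle $W_i \to M \xrightarrow{g_i} Z_i \to W_i[1]$ and applying \Cref{lem:Scha} to the two approximations of $Z$, one obtains
$$W_1 \oplus W_2 \oplus M \cong Z' \oplus M \oplus M.$$
The right-hand side lies in $\Fac_d(\M)$ since $Z' \in \Fac_d(\M)$, $\M \subseteq \Fac_d(\M)$ by~\eqref{eq:subset}, and $\Fac_d(\M)$ is closed under finite direct sums; hence the left-hand side does too, which in particular forces $W_1 \in \dH$. The inductive hypothesis applied to $W_1$ as a direct summand of $W_1 \oplus W_2 \oplus M \in \Fac_d(\M)$ then gives $W_1 \in \Fac_l(\M)$. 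Since $W_1, M, Z_1 \in \dH$, the triangle $W_1 \to M \to Z_1 \to W_1[1]$ is an extriangle in $\dH$, exhibiting $Z_1 \in \Fac_{l+1}(\M)$. The ``in particular'' assertion is then the case $s = d$.

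The key technical point, and the reason for formulating the inductive hypothesis so strongly, is that $W_1$ appears only as a direct summand of an object of $\Fac_d(\M)$ via the Schanuel isomorphism, and not as an object of $\Fac_d(\M)$ on its own; for the induction to close up, the statement at stage $l$ must quantify over all such summands simultaneously, not merely over the originally given $Z_1$. The verification that $W_1 \in \dH$ likewise passes through the Schanuel identification rather than through the defining triangle of $W_1$, which on its own only places $W_1$ in $\D^{[-d,0]}$.
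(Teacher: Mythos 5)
Your proof is correct. The induction scheme --- with the strong hypothesis quantifying over \emph{all} direct summands of \emph{all} objects of $\Fac_d(\M)$ --- is exactly the one the paper uses, and your closing step is the paper's as well: your $W_1$ is (up to isomorphism) the paper's object $L$, since $g_1=p_1\circ f_Z$ is precisely the composite the paper forms before rotating. Where you genuinely diverge is in how you place this syzygy inside $\Fac_d(\M)$ up to adding summands. The paper applies the octahedral axiom to $p_1\circ f_Z$, adds the summand $Z_1$ to two terms to obtain a triangle $Z'\to L\oplus Z_1\to Z\to Z'[1]$, and then invokes closure of $\Fac_d(\M)$ under extensions, i.e.\ \Cref{prop001}. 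You instead build a second right $\M$-approximation $g_1\oplus g_2:M\oplus M\to Z$ out of the componentwise approximations and compare it with $f_Z$ via the Schanuel-type \Cref{lem:Scha}, getting $W_1\oplus W_2\oplus M\cong Z'\oplus M\oplus M$; this needs only \Cref{lem1} and closure of $\Fac_d(\M)$ under finite direct sums. Your route is somewhat more self-contained, since it bypasses the extension-closure result (and the octahedral axiom); the paper's route is shorter given that \Cref{prop001} is already in hand at that point in the argument. One trivial slip in your final commentary: the defining triangle $W_1\to M\to Z_1\to W_1[1]$ on its own places $W_1$ in $\D^{[-d+1,1]}$, not $\D^{[-d,0]}$ --- but your point stands that this alone does not yield $W_1\in\dH$, so the detour through the Schanuel identification (or, in the paper, through the second column of the octahedral diagram) is indeed needed.
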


\begin{proof}
    We proceed by induction on $s \geq 0$, with the starting case $s = 0$ being trivial. Assume that the assertion holds for $s = l$, where $l \geq 0$ is an integer. We consider the case $s = l + 1$.

    Let $\ZZ$ be an object of $\Fac_d(\M)$ such that $\ZZ=\XX\oplus\YY$. By Lemma~\ref{lem1}, there exists a triangle
    $$\ZZ'\to\MM'\xrightarrow{f_{\ZZ}}\ZZ\to\ZZ'[1]$$
    with $\MM'\in\M$ and $\ZZ'\in\Fac_d(\M)$. Composing $f_\ZZ$ with the canonical projection from $\ZZ$ to $\XX$ and applying the octahedral axiom, we obtain a commutative diagram of triangles:
    $$\xymatrix{
    &\ZZ'\ar[d]\ar@{=}[r]&\ZZ'\ar[d]\\
    \XX[-1]\ar[r]\ar@{=}[d]&\LL\ar[r]\ar[d]&\MM'\ar[d]_{f_{\ZZ}}\ar[r]&\XX\ar@{=}[d]\\
    \XX[-1]\ar[r]^{0}&\YY\ar[r]\ar[d]&\ZZ\ar[d]\ar[r]&\XX\\
    &\ZZ'[1]\ar@{=}[r]&\ZZ'[1]
    }$$
    By adding $\XX$ to two terms in the triangle in the second column, we have a triangle
    $$\ZZ'\to\LL\oplus\XX\to \YY\oplus\XX\to \ZZ'[1].$$
    Since $\Fac_d(\M)$ is closed under extensions by Proposition~\ref{prop001}, and both $\ZZ'$ and $\YY \oplus \XX = \ZZ$ belong to $\Fac_d(\M)$, it follows that $\LL \oplus \XX \in \Fac_d(\M)$. Thus, by the induction assumption, we have $\LL\in \Fac_l(\M)$. Therefore, the triangle in the second row of the above diagram shows that $\XX\in \Fac_{l+1}(\M)$.
\end{proof}

\begin{proposition}\label{closefac}
    For any integer $s \geq 0$, we have 
    $$\Fac_s(\Fac_d(\M)) \subseteq \Fac_s(\M).$$
    In particular, $\Fac_d(\M)$ is closed under $d$-factors.
\end{proposition}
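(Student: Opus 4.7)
I will prove the inclusion by induction on $s\geq 0$. The case $s=0$ is immediate since $\Fac_0(\X)=\dH$ for any $\X$ by convention.

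For the inductive step, suppose $\ZZ_0\in\Fac_{s+1}(\Fac_d(\M))$ via an extriangle $\ZZ_1\to\XX_1\to\ZZ_0$ with $\XX_1\in\Fac_d(\M)$ and $\ZZ_1\in\Fac_s(\Fac_d(\M))$. Applying \Cref{lem1} to $\XX_1$ yields an extriangle $\XX_1'\to\MM_1\to\XX_1$ with $\MM_1\in\M$ and $\XX_1'\in\Fac_d(\M)$. The octahedral axiom applied to the composition $\MM_1\to\XX_1\to\ZZ_0$ produces triangles $\NN_0\to\MM_1\to\ZZ_0$ and $\XX_1'\to\NN_0\to\ZZ_1$ in $\D$; since $\XX_1',\ZZ_1\in\dH$ and $\dH$ is closed under extensions, $\NN_0\in\dH$, so both are extriangles in $\dH$. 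To conclude $\ZZ_0\in\Fac_{s+1}(\M)$ via the first of these, it suffices to show $\NN_0\in\Fac_s(\M)$, and by the induction hypothesis this reduces to showing $\NN_0\in\Fac_s(\Fac_d(\M))$.

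The key tool is the following auxiliary lemma: for any $r\geq 0$, any $\XX\in\Fac_d(\M)$ and $\YY\in\Fac_r(\Fac_d(\M))$, every extriangle $\XX\to\WW\to\YY$ in $\dH$ satisfies $\WW\in\Fac_r(\Fac_d(\M))$. The case $r=0$ is trivial. For $r\geq 1$, take a realization $\YY_1\to\YY_0\to\YY$ of $\YY$ with $\YY_0\in\Fac_d(\M)$ and $\YY_1\in\Fac_{r-1}(\Fac_d(\M))$, and form the homotopy pullback of $\WW\to\YY$ and $\YY_0\to\YY$ in $\D$. This produces an object $\PP$ fitting in two triangles $\XX\to\PP\to\YY_0$ and $\YY_1\to\PP\to\WW$; since $\XX,\YY_0\in\dH$ and $\dH$ is closed under extensions, $\PP\in\dH$, so both are extriangles in $\dH$. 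Because $\Fac_d(\M)$ is closed under extensions by \Cref{prop001}(1) and $\XX,\YY_0\in\Fac_d(\M)$, we get $\PP\in\Fac_d(\M)$. Prepending the extriangle $\YY_1\to\PP\to\WW$ to a chain realizing $\YY_1\in\Fac_{r-1}(\Fac_d(\M))$ then exhibits $\WW\in\Fac_r(\Fac_d(\M))$. Applying this lemma to $\XX_1'\to\NN_0\to\ZZ_1$ with $r=s$ yields $\NN_0\in\Fac_s(\Fac_d(\M))$, completing the induction. The ``in particular'' clause is the case $s=d$.

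The main technical obstacle is constructing the homotopy pullback of $\WW\to\YY$ and $\YY_0\to\YY$ cleanly within the extriangulated category $\dH$; this is carried out via the octahedral axiom in $\D$, after which one must verify that the pullback object lies in both $\dH$ and $\Fac_d(\M)$. Once this is in place, the remainder of the proof is a clean synthesis of the extension closure of $\Fac_d(\M)$ (\Cref{prop001}(1)) with the inductive hypothesis.
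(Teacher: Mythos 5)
Your proof is correct and follows essentially the same route as the paper's: induction on $s$, replacement of the $\Fac_d(\M)$-term $\XX_1$ by an $\M$-approximation via \Cref{lem1} and the octahedral axiom (the paper's diagram~\eqref{eq:001}), and then a homotopy pullback against the first step of the chain for the lower factor to control the resulting cone (the paper's diagram~\eqref{eq:002}); your auxiliary lemma merely packages that second step cleanly, pulling back against the $\Fac_d(\M)$-chain of $\ZZ_1$ directly and thereby saving one invocation of the induction hypothesis. One small slip: the extension-closure of $\Fac_d(\M)$ you need to place $\PP$ in $\Fac_d(\M)$ (outer terms $\XX,\YY_0$ of the extriangle in $\Fac_d(\M)$) is \Cref{prop001}(2), not (1).
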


\begin{proof}
    We proceed by induction on $s$, with the trivial starting case $s=0$. Suppose the inclusion holds when $s=l$ for some $l\geq 0$. We consider the case $s=l+1$.
    For any $\ZZ\in\Fac_s(\Fac_d(\M))$, by definition, there exists a triangle
    $$\ZZ'\to\YY\xrightarrow{f}\ZZ\to\ZZ'[1],$$
    where $\YY\in\Fac_d(\M)$ and $\ZZ'\in\Fac_{s-1}(\Fac_d(\M))$. Then by Lemma~\ref{lem1}, there exists a triangle
    $$\YY'\to\MM'\xrightarrow{f_{\YY}}\YY\to\YY'[1],$$
    with $\MM'\in\M$ and $\YY'\in\Fac_d(\M)$. By taking the composition $f\circ f_{\YY}:\MM'\to\ZZ$ and applying the octahedral axiom, we have the following commutative diagram of triangles:
    \begin{equation}\label{eq:001}
    \xymatrix{
    &\YY'\ar@{=}[r]\ar[d]&\YY'\ar[d]\\
    \ZZ[-1]\ar[r]\ar@{=}[d]&\LL\ar[r]\ar[d]_{g}&\MM'\ar[r]^{f\circ f_{\YY}}\ar[d]_{f_{\YY}}&\ZZ\ar@{=}[d]\\
    \ZZ[-1]\ar[r]&\ZZ'\ar[r]\ar[d]&\YY\ar[r]^{f}\ar[d]&\ZZ\\
    &\YY'[1]\ar@{=}[r]&\YY'[1]
    }
    \end{equation}
    Considering the triangle in the second column, we have $\LL\in\dH$. 
    Hence, the triangle in the second row implies that $\ZZ\in\Fac_1(\M)$. Thus, the inclusion holds for $s=l+1$ when $l=0$.
    
    In what follows, assume $l\geq 1$. Since $\ZZ'\in\Fac_{l}(\Fac_d(\M))$, by the induction assumption, we have $\ZZ'\in\Fac_{l}(\M)$. Hence, by definition, there exists a triangle
    $$\widetilde{\ZZ'}\to\widetilde{\MM}\to\ZZ'\xrightarrow{h}\widetilde{\ZZ'}[1],$$
    with $\widetilde{\MM}\in\M$ and $\widetilde{\ZZ'}\in\Fac_{l-1}(\M)=\Fac_{l-1}(\Fac_d(\M))$. By taking the composition $h\circ g:\LL\to \widetilde{\ZZ'}[1]$ and applying the octahedral axiom, there is a commutative diagram of triangles:
    \begin{equation}\label{eq:002}
    \xymatrix{
    &\widetilde{\ZZ'}\ar@{=}[r]\ar[d]&\widetilde{\ZZ'}\ar[d]\\
    \YY'\ar@{=}[d]\ar[r]&\NN\ar[r]\ar[d]&\widetilde{\MM}\ar[d]\ar[r]&\YY'[1]\ar@{=}[d]\\
    \YY'\ar[r]&\LL\ar[r]^{g}\ar[d]_{h\circ g}&\ZZ'\ar[d]^h\ar[r]&\YY'[1]\\
    &\widetilde{\ZZ'}[1]\ar@{=}[r]&\widetilde{\ZZ'}[1]
    }
    \end{equation}
    Since $\Fac_d(\M)$ is closed under extensions by Proposition~\ref{prop001}, and both $\YY' \in \Fac_d(\M)$ and $\widetilde{\MM} \in \M \subseteq \Fac_d(\M)$, the triangle in the second row of diagram~\eqref{eq:002} implies that $\NN \in \Fac_d(\M)$. Then by the triangle in the second column of diagram~\eqref{eq:002}, we have $\LL\in\Fac_l(\Fac_d(\M))$. Hence, by the induction hypothesis, we have $\LL \in \Fac_l(\M)$. It then follows from the triangle in the second row of diagram~\eqref{eq:001} that $\ZZ \in \Fac_{l+1}(\M)$, which completes the proof.
\end{proof}

\section{Tilting subcategories of extended hearts}\label{sec:tilt}

Throughout this section, we assume that $\P\subseteq\dH$. By Remark~\ref{rmk:enoughproj}, the extriangulated category $\dH$ has enough projectives $\P$. Thus, following \cite[Section~5.1]{LN}, one can define higher $\mathbb{E}$-extensions in $\dH$ as follows. For any object $\MM\in\dH$, there exist extriangles
\begin{equation}\label{eq:projreso}
    \ZZ_i\xrightarrow{f_i} P_{i-1}\xrightarrow{g_{i-1}} \ZZ_{i-1},\ i>0,
\end{equation}
with $\ZZ_0=\MM$ and $P_{i-1} \in \P$. For any $\NN\in\dH$, define 
$$\mathbb{E}^i(\MM,\NN):=\mathbb{E}(\ZZ_{i-1},\NN).$$ 
Alternatively, the higher $\mathbb{E}$-extensions can be defined directly as
$$\mathbb{E}^i(\MM,\NN):=\Hom(\MM,\NN[i]),$$
for any $\MM,\NN\in\dH$ and $i>0$. Indeed, applying $\Hom(-,\NN)$ to the extriangles in \eqref{eq:projreso}, we obtain an exact sequence for any $i>0$ and $j>0$:
$$\Hom(P_{i-1},\NN[j])\to \Hom(\ZZ_i,\NN[j])\to\Hom(\ZZ_{i-1},\NN[j+1])\to\Hom(P_{i-1},\NN[j+1]),$$
where the first and last terms vanish. Hence, there are functorial isomorphisms
$$\Hom(\ZZ_i,\NN[j])\cong\Hom(\ZZ_{i-1},\NN[j+1]),\ i>0,\ j>0,$$
which imply that
$\mathbb{E}^i(\MM,\NN)\cong\Hom(\MM,\NN[i])$ functorially in both variables.

An object $\MM \in \dH$ is said to have \emph{projective dimension at most $m$} if there exist extriangles of the form \eqref{eq:projreso} such that $\ZZ_m \in \P$. Then $\MM$ has projective dimension at most $m$ if and only if $\mathbb{E}^j(\MM,\NN)=0$ for all $j>m$ and all $\NN\in\dH$; see, e.g., \cite[Lemma~3]{ZZ}.

\begin{definition}\label{def:tilt}
    A subcategory $\M$ of $\dH$ is called \emph{tilting} if the following hold.
    \begin{enumerate}[label=(T\arabic*), ref=(T\arabic*)]
        \item\label{item:T1} Every object of $\M$ has projective dimension at most $d$.
        \item\label{item:T2} $\mathbb{E}^i(\M,\M)=0$ for all $i>0$.
        \item\label{item:T3} For any $P\in\P$, there exist extriangles
        $$\YY_{i-1}\to \MM_{i-1}\to \YY_i,\ 1\leq i\leq r,$$
        for some $r$, with $\YY_0=P$ and $\MM_0,\dots,\MM_{r-1},\YY_r\in\M$.
    \end{enumerate}
\end{definition}

We note that a notion of tilting objects in the extended heart also appears in the recent work \cite{AMMP}; however, the tilting objects considered there are required to have projective dimension at most one.

Analogously to the case of module categories, a subcategory of $\dH$ with finite projective dimension can be regarded as a subcategory of $\K$.

\begin{lemma}\label{projdim}
    Let $\M$ be a subcategory of $\dH$ and $m$ a positive integer. Then $\M$ has projective dimension at most $m$ if and only if $\M\subseteq\P^{[0,m]}$. 
\end{lemma}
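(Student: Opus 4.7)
The plan is to argue both directions directly from the cohomological characterisation of projective dimension recalled just above the statement, namely that an object $\MM\in\dH$ has projective dimension at most $m$ if and only if $\mathbb{E}^j(\MM,\NN)=\Hom(\MM,\NN[j])=0$ for all $j>m$ and all $\NN\in\dH$.

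For the ``only if'' direction, I would fix $\MM\in\M$ and take extriangles $\ZZ_i\to P_{i-1}\to \ZZ_{i-1}$, $1\leq i\leq m$, witnessing that $\MM$ has projective dimension at most $m$; here $\ZZ_0=\MM$, $\ZZ_m\in\P$, and each $P_{i-1}\in\P$. Each extriangle comes from a triangle in $\D$, which rotates to give $\ZZ_{i-1}\in P_{i-1}*\ZZ_i[1]$. Iterating this and using the associativity of $*$ yields
\[\MM=\ZZ_0\in P_0*P_1[1]*\cdots*P_{m-1}[m-1]*\ZZ_m[m]\subseteq \P*\P[1]*\cdots*\P[m]=\P^{[0,m]}.\]

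For the ``if'' direction, suppose $\M\subseteq\P^{[0,m]}$, take $\MM\in\M$ and $\NN\in\dH$, and fix $j>m$. By the cited characterisation, it is enough to prove $\Hom(\MM,\NN[j])=0$. Since $\MM\in \P*\P[1]*\cdots*\P[m]$ is built by iterated extensions from objects of $\P[i]$ with $0\leq i\leq m$, repeated application of $\Hom(-,\NN[j])$ to the defining triangles reduces the question to showing $\Hom(\P[i],\NN[j])=0$ for each such $i$. But $\Hom(\P[i],\NN[j])=\Hom(\P,\NN[j-i])$ with $j-i\geq j-m>0$, and this vanishes by the very definition of $\D^{\leq 0}_\P$ together with $\NN\in\dH\subseteq\D^{\leq 0}=\D^{\leq 0}_\P$.

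I do not anticipate a real obstacle here: both directions are formal manipulations of the $*$-product and of the subcategory $\D^{\leq 0}_\P$. The only point worth keeping in mind is the translation between extriangles in $\dH$ and their underlying triangles in $\D$, which is built into the extriangulated structure introduced in Section~2; once this translation is granted, the ``only if'' direction is a straightforward unfolding of a projective resolution, while the ``if'' direction collapses to the defining vanishing condition of $\D^{\leq 0}_\P$.
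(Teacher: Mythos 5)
Your proposal is correct, and the ``only if'' direction coincides with the paper's: both unfold the defining extriangles into the iterated $*$-product $P_0*P_1[1]*\cdots*P_{m-1}[m-1]*\ZZ_m[m]\subseteq\P^{[0,m]}$. The ``if'' direction, however, takes a genuinely different route. You invoke the cohomological characterisation of projective dimension (stated in the paper with a citation to [ZZ, Lemma~3]) and then verify the vanishing $\Hom(\MM,\NN[j])=0$ for $j>m$ by d\'evissage along the $*$-decomposition of $\MM$, reducing to $\Hom(\P,\NN[j-i])=0$, which holds because $\NN\in\dH\subseteq\D^{\leq 0}=\D^{\leq 0}_\P$ and $j-i>0$. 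The paper instead constructs the projective resolution directly by induction on $m$: from $\MM\in\P^{[0,m]}$ it extracts a triangle $\MM'\to P'\to\MM\to\MM'[1]$ with $P'\in\P$ and $\MM'\in\P^{[0,m-1]}$, and the real content of its argument is the check that the syzygy $\MM'$ again lies in $\dH$ (it is in $\D^{\leq 0}$ because $\P^{[0,m-1]}\subseteq\D^{\leq 0}$, and in $\D^{\geq -d+1}$ because $\MM'\in\MM[-1]*P'$), so that the induction can proceed inside the extriangulated category. Your route is shorter and purely formal, but it outsources precisely this syzygy-control to the cited characterisation, whose nontrivial direction (vanishing of $\mathbb{E}^{>m}$ implies finite projective dimension) itself amounts to building the resolution and identifying the $\mathbb{E}$-projectives of $\dH$ with $\P$; the paper's induction keeps the argument self-contained and makes explicit that the truncated resolution stays in $\dH$ at every step. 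Both arguments are valid under the standing hypotheses of Section~4 ($\P$ contravariantly finite and $\P\subseteq\dH$).
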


\begin{proof}
    If every object in $\M$ has projective dimension at most $m$, then by definition, any object $\MM\in\M$ admits extriangles as in \eqref{eq:projreso}, which imply that
    $$\M\subseteq (\add P_0)*(\add P_1[1])*\cdots*(\add P_{m-1}[m-1])*(\add Z_m[m])\subseteq\P^{[0,m]}.$$

    Conversely, suppose that $\M\subseteq\P^{[0,m]}$. Then for any object $\MM\in\M$, there exists a triangle
    $$\MM'\to P'\to \MM\to\MM'[1],$$
    where $P'\in\P\subseteq\dH$ and $\MM'\in\P^{[0,m-1]}\subseteq\D^{\leq 0}$. It follows that $\MM'\in \M[-1]* \dH\subseteq \D^{\geq -d+1}$. Hence, we have $\MM'\in\D^{\leq 0}\cap\D^{\geq -d+1}=\dH$. Therefore, we may apply induction on $m$ to deduce that $\MM$ has projective dimension at most $m$, where in the starting case $m=0$, $\MM\in\P\subseteq\dH$ has projective dimension zero.
\end{proof}

We have the following equivalent characterization of tilting subcategories of $\dH$.

\begin{proposition}\label{tilt=silt}
    Let $\M$ be a subcategory of $\dH$. Then $\M$ is a tilting subcategory of $\dH$ if and only if it is a $(d+1)$-term silting subcategory of $\K$.
\end{proposition}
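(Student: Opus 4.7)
The plan is to establish the equivalence by translating (T1) and (T2) formally and doing real work to match (T3) with the generation condition of silting. By \Cref{projdim}, (T1) is equivalent to $\M\subseteq\P^{[0,d]}$, i.e., to $\M$ being $(d+1)$-term with respect to $\P$; and, using the identification $\mathbb{E}^i(\MM,\MM')\cong\Hom(\MM,\MM'[i])$ for $i>0$ established at the start of this section, (T2) rewrites as $\Hom(\M,\M[i])=0$ for all $i>0$, which is exactly the presilting condition on $\M$. So in both directions the real content is the equivalence between (T3) and the generation part of the silting condition.

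For the forward direction, assuming $\M$ is tilting, I would upgrade (T3) to $\P\subseteq\thick(\M)$, where the thick closure is taken in $\D$ (equivalently in $\K$, since $\M\subseteq\P^{[0,d]}\subseteq\K$ and $\K$ is thick in $\D$). This follows by descending induction along the system of extriangles in (T3): each extriangle is, by definition, a triangle in $\D$, and thick subcategories are closed under the third vertex of a triangle whose other two vertices belong to them, so starting from $\YY_r,\MM_{r-1}\in\M\subseteq\thick(\M)$ one obtains $\YY_{r-1}\in\thick(\M)$, then $\YY_{r-2}\in\thick(\M)$, and so on down to $\YY_0=P\in\thick(\M)$. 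Combined with $\M\subseteq\K$, this forces $\thick(\M)=\thick(\P)=\K$, and \Cref{rmk:silting} then gives that $\M$ is silting in $\K$.

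For the backward direction, the substantive work is the construction of the $\M$-coresolution required by (T3). Starting from $\M$ a $(d+1)$-term silting subcategory of $\K$, \Cref{compare} yields $\P\subseteq\M^{[-d,0]}$. I plan to prove, by induction on $s\in\{0,1,\ldots,d\}$, the following refined claim: for every $X\in\dH\cap\M^{[-s,0]}$, there exist extriangles $\YY_{i-1}\to\MM_{i-1}\to\YY_i$ in $\dH$ for $1\leq i\leq s$ with $\YY_0=X$ and $\MM_0,\ldots,\MM_{s-1},\YY_s\in\M$. Applied to $s=d$ and $X=P\in\P\subseteq\dH\cap\M^{[-d,0]}$, this yields (T3). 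For the inductive step with $s\geq 1$, I would use the associativity of $\ast$ to rewrite $\M^{[-s,0]}=\M^{[-s+1,0]}[-1]\ast\M$, producing a triangle $B[-1]\to X\to\MM\to B$ in $\D$ with $\MM\in\M$ and $B\in\M^{[-s+1,0]}$, and then rotate it to the candidate extriangle $X\to\MM\to B$.

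The main obstacle is verifying that $B\in\dH$, so that this is genuinely an extriangle in $\dH$ and the induction hypothesis can be applied to $B$. I plan to pin $B$ down by combining two estimates. First, the shift formula $\M[k]\subseteq\dH[k]=\D^{[-d+1-k,-k]}$ (immediate from $\M\subseteq\dH=\D^{[-d+1,0]}$) together with $B\in\M^{[-s+1,0]}$ gives $B\in\D^{[-d+1,s-1]}$ by taking the union of cohomology ranges across the $\ast$-filtration. Second, from the triangle $X\to\MM\to B\to X[1]$ with $X,\MM\in\D^{\leq 0}$, closure of the aisle under extensions forces $B\in\D^{\leq 0}$. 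Intersecting these yields $B\in\D^{[-d+1,\min(s-1,0)]}=\dH$ whenever $s\geq 1$, which is exactly the range needed. Splicing the extriangle $X\to\MM\to B$ with the coresolution of $B$ produced by the induction hypothesis then completes the inductive step, and the $s=d$ specialization finishes (T3).
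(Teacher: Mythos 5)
Your proof is correct and follows the same reduction as the paper: \Cref{projdim} handles (T1), the identification $\mathbb{E}^i(-,-)\cong\Hom(-,-[i])$ handles (T2), and (T3) is matched with the generation condition. The paper's own proof dismisses the last equivalence in one sentence, whereas you supply the genuinely nontrivial detail — unrolling $\P\subseteq\M^{[-d,0]}$ (from \Cref{compare}) via the grouping $\M^{[-s,0]}=\M^{[-s+1,0]}[-1]\ast\M$ and checking that each cosyzygy lands in $\D^{[-d+1,s-1]}\cap\D^{\leq 0}\subseteq\dH$ — and this verification is accurate.
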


\begin{proof}
    By Lemma~\ref{projdim}, $\M$ satisfies condition \ref{item:T1}, if and only if $\M\subseteq\P^{[0,d]}$. In this case, $\M$ satisfies \ref{item:T2} and \ref{item:T3} if and only if $\M$ is a silting subcategory of $\K$. This completes the proof.
\end{proof}

A subcategory $\M$ of $\dH$ is called \emph{$\mathbb{E}$-universal} if for any object $\XX\in\dH$, there exists an extriangle
$$\XX\to\YY\to \MM,$$
with $\MM\in\M$ such that for any $\MM'\in\M$, the induced map $\Hom(\MM',\MM)\to\mathbb{E}(\MM',\XX)$ is surjective; see \cite[Definition~5.7]{AMMP}. Since the extriangulated structure of $\dH$ is induced from the triangulated structure of $\D$, a subcategory $\M$ of $\dH$ is $\mathbb{E}$-universal if and only if for any object $\XX\in\dH$, there exists a right $\M$-approximation of $\XX[1]$.

\begin{lemma}\label{confin}
    Let $\M$ be a subcategory of $\dH$, satisfying $\M\subseteq\P^{[0,d]}$. Then the following are equivalent.
    \begin{enumerate}
        \item $\M$ is both contravariantly finite and $\mathbb{E}$-universal in $\dH$.
        \item $\M$ is contravariantly finite in $\dplusH$.
        \item $\M$ is contravariantly finite in $\D$.
    \end{enumerate}
\end{lemma}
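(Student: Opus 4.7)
The plan is to prove the cyclic chain $(3)\Rightarrow(2)\Rightarrow(1)\Rightarrow(3)$. The first implication $(3)\Rightarrow(2)$ is immediate: any right $\M$-approximation of $X\in\dplusH\subseteq\D$ computed in $\D$ has its domain in $\M\subseteq\dH\subseteq\dplusH$ and therefore already serves as a right $\M$-approximation of $X$ in $\dplusH$.

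For $(2)\Rightarrow(1)$, the inclusion $\dH\subseteq\dplusH$ restricts contravariant finiteness from $\dplusH$ to $\dH$. The $\mathbb{E}$-universality of $\M$ in $\dH$ is, by the characterization recalled just before the lemma, equivalent to the existence of a right $\M$-approximation of $X[1]$ for every $X\in\dH$; since $X[1]\in\dH[1]=\D^{[-d,-1]}\subseteq\dplusH$, this is supplied by (2).

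For the main implication $(1)\Rightarrow(3)$, given $Z\in\D$, I would reduce successively to $\D^{\leq 0}$ and then to $\dH$. First, using the truncation triangle $\sigma^{\leq 0}Z\to Z\to\sigma^{\geq 1}Z$ together with the $t$-structure vanishing $\Hom(\D^{\leq 0},\D^{\geq 1})=0$, the inclusion $\M\subseteq\D^{\leq 0}$ yields a natural isomorphism $\Hom(M,\sigma^{\leq 0}Z)\cong\Hom(M,Z)$ for $M\in\M$, so that a right $\M$-approximation of $\sigma^{\leq 0}Z$, post-composed with $\sigma^{\leq 0}Z\to Z$, is a right $\M$-approximation of $Z$. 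Second, for $W\in\D^{\leq 0}$, consider $\sigma^{\leq -d}W\to W\to H^{[-d+1,0]}(W)$: since $\M\subseteq\dH\subseteq\D^{\geq -d+1}$ while $\sigma^{\leq -d}W\in\D^{\leq -d}$ and $\sigma^{\leq -d}W[1]\in\D^{\leq -d-1}$, the $t$-structure axiom forces $\Hom(M,\sigma^{\leq -d}W)=0=\Hom(M,\sigma^{\leq -d}W[1])$, giving a natural isomorphism $\Hom(M,W)\cong\Hom(M,H^{[-d+1,0]}(W))$ induced by post-composition with $W\to H^{[-d+1,0]}(W)$. Since $H^{[-d+1,0]}(W)\in\dH$, the contravariant finiteness hypothesis of (1) provides a right $\M$-approximation $g\colon M\to H^{[-d+1,0]}(W)$; its unique preimage $\tilde g\colon M\to W$ under the above isomorphism is then, by a short diagram chase using the same isomorphism applied to arbitrary $M'\in\M$, a right $\M$-approximation of $W$ in $\D$.

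The main obstacle will be the second reduction step: keeping careful track of the nested truncations and verifying that the lift $\tilde g$ truly inherits the right-approximation property, rather than merely a factorization after projection to $H^{[-d+1,0]}(W)$. This should reduce to naturality of the Hom isomorphism in its first variable, which follows from functoriality of the truncations. All the requisite Hom vanishings depend on the precise placement $\M\subseteq\D^{[-d+1,0]}$ within the $t$-structure $(\D^{\leq 0},\D^{\geq 0})$.
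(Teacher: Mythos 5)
Your implications $(3)\Rightarrow(2)$ and $(2)\Rightarrow(1)$ are correct (the paper instead proves the cycle $(1)\Rightarrow(2)\Rightarrow(3)\Rightarrow(1)$, but your two easy steps are fine, and your first reduction in $(1)\Rightarrow(3)$, from $\D$ to $\D^{\leq 0}$, is also valid). The problem is the second reduction in $(1)\Rightarrow(3)$, which rests on a false vanishing. You claim that $\M\subseteq\D^{\geq -d+1}$ and $\sigma^{\leq -d}W\in\D^{\leq -d}$ force $\Hom(\MM,\sigma^{\leq -d}W)=0$ ``by the $t$-structure axiom''. The axiom gives $\Hom(\D^{\leq n},\D^{\geq n+1})=0$, i.e.\ vanishing of maps from low to high; it says nothing about maps from $\D^{\geq -d+1}$ to $\D^{\leq -d}$, and these do not vanish in general. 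Concretely, for $d=1$ and $\D=D^b(\mod A)$, take $\MM\in\M\subseteq\H$ of projective dimension $1$ and $W=L[1]$ for a module $L$ with $\Ext^1_A(\MM,L)\neq 0$; then $\sigma^{\leq -1}W=W$ and $\Hom(\MM,\sigma^{\leq -1}W)=\Ext^1_A(\MM,L)\neq 0$, so your isomorphism $\Hom(\MM,W)\cong\Hom(\MM,H^{[-d+1,0]}(W))$ fails (here the right-hand side is $0$). A structural warning sign is that your $(1)\Rightarrow(3)$ never uses $\mathbb{E}$-universality and never uses the hypothesis $\M\subseteq\P^{[0,d]}$ beyond $\M\subseteq\dH$; if the argument were correct, contravariant finiteness in $\dH$ alone would already imply $\mathbb{E}$-universality, and the lemma would be stated very differently.

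The correct route, which is the paper's, passes through $\dplusH$ in two genuinely different steps. For $(1)\Rightarrow(2)$ one truncates $\XX\in\dplusH$ as $H^{-d}(\XX)[d]\to\XX\to H^{[-d+1,0]}(\XX)$ and glues a right $\M$-approximation of $H^{[-d+1,0]}(\XX)$ (contravariant finiteness in $\dH$) with a right $\M$-approximation of $H^{-d}(\XX)[d]=\bigl(H^{-d}(\XX)[d-1]\bigr)[1]$, which is exactly what $\mathbb{E}$-universality provides since $H^{-d}(\XX)[d-1]\in\dH$. For $(2)\Rightarrow(3)$ one truncates $\sigma^{\leq 0}(\XX)$ only down to $H^{[-d,0]}(\XX)\in\dplusH$, and the relevant vanishing $\Hom(\M,\D^{\leq -d-1})=0$ does hold --- but only because $\M\subseteq\P^{[0,d]}$, via $\Hom(\P[j],\D^{\leq 0}[d+1])=\Hom(\P,\D^{\leq 0}[d+1-j])=0$ for $0\leq j\leq d$. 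Your argument truncates one degree too far: the passage from $\D^{\leq 0}$ all the way down to $\dH$, rather than to $\dplusH$, is precisely where the extra approximation supplied by $\mathbb{E}$-universality cannot be dispensed with.
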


\begin{proof}
    (1)$\implies$(2): For any $\XX\in\dplusH$, by truncation, there exists a triangle
    \[H^{-d}(\XX)[d]\xrightarrow{\alpha}\XX\xrightarrow{\beta} H^{[-d+1,0]}(\XX)\xrightarrow{\gamma} H^{-d}(\XX)[d+1].\]
    Since $\M$ is contravariantly finite in $\dH$, there exists a right $\M$-approximation $f:\MM\to H^{[-d+1,0]}(\XX)$. Since $\M\subseteq\P^{[0,d]}$, we have $\Hom(\M,H^{-d}(\XX)[d+1])=0$, which implies that $\gamma\circ f=0$. Hence, there exists $h:\MM\to\XX$ such that $f=\beta\circ h$. Since $\M$ is $\mathbb{E}$-universal in $\dH$, there exists a right $\M$-approximation $g:\MM'\to H^{-d}(\XX)[d]$. See the following diagram.
    \[\xymatrix{
    \MM'\ar[d]_{g}&\MM''\ar[d]_{\varphi}\ar@{-->}[r]^{\varphi'}\ar@{-->}[dl]_{\varphi''}\ar@{-->}[l]_{\varphi'''}&\MM\ar[d]^{f}\ar[dl]_{h}\\
    H^{-d}(\XX)[d]\ar[r]^{\qquad\alpha}&\XX\ar[r]^{\beta\qquad}& H^{[-d+1,0]}(\XX)\ar[r]^{\gamma\quad}& H^{-d}(\XX)[d+1]
    }\]
    It suffices to show that the morphism
    \[\left(\begin{matrix}
        h&\alpha\circ g
    \end{matrix}\right)\colon \MM\oplus\MM'\to \XX\]
    is a right $\M$-approximation of $\XX$. Indeed, for any morphism $\varphi:\MM''\to \XX$, since $f$ is a right $\M$-approximation of $H^{[-d+1,0]}(\XX)$, the composition $\beta\circ \varphi:\MM''\to H^{[-d+1,0]}(\XX)$ factors through $f$, that is, there exists $\varphi':\MM''\to \MM$ such that $\beta\circ \varphi=f\circ\varphi'$. Then $\beta\circ(\varphi-h\circ\varphi')=0$. It follows that there exists $\varphi'':\MM''\to H^{-d}(\XX)[d]$ such that $\varphi-h\circ\varphi'=\alpha\circ\varphi''$. Since $g$ is a right $\M$-approximation of $H^{-d}(\XX)[d]$, there exists $\varphi''':\MM''\to\MM'$ such that $\varphi''=g\circ\varphi'''$. Hence, $\varphi=h\circ\varphi'+\alpha\circ g\circ\varphi'''$. Thus, we have $$\varphi=\left(\begin{matrix}
        h&\alpha\circ g
    \end{matrix}\right)\circ\left(\begin{matrix}
        \varphi'\\\varphi'''
    \end{matrix}\right),$$
    which shows that the morphism $\left(\begin{matrix}
        h&\alpha\circ g
    \end{matrix}\right)$ is a right $\M$-approximation, as required.

    (2)$\implies$(3): For any $\XX\in\D$, by truncation, there exist two triangles
    \[\sigma^{\leq -d-1}(\XX)\to \sigma^{\leq 0}(\XX)\xrightarrow{g} H^{[-d,0]}(\XX)\to\sigma^{\leq -d-1}(\XX)[1],\]
    \[\sigma^{>0}(\XX)[-1]\to \sigma^{\leq 0}(\XX)\xrightarrow{h}\XX\to \sigma^{>0}(\XX).\]
    Since $\M\subseteq\P^{[0,d]}$, for any $\MM\in\M$, we have $$\Hom(\MM,\sigma^{\leq -d-1}(\XX))=0=\Hom(\MM,\sigma^{\leq -d-1}(\XX)[1]),$$
    $$\Hom(\MM,\sigma^{>0}(\XX)[-1])=0=\Hom(\MM,\sigma^{>0}(\XX)).$$
    Hence, applying $\Hom(\MM,-)$ to the above triangles, yields functorial isomorphisms
    \[\Hom(\MM,g)\colon\Hom(\MM,\sigma^{\leq 0}(\XX))\xrightarrow{
    \cong
    }\Hom(\MM,H^{[-d,0]}(\XX)),\]
    \[\Hom(\MM,h)\colon\Hom(\MM,\sigma^{\leq 0}(\XX))\xrightarrow{
    \cong
    }\Hom(\MM,\XX).\]
    It follows from the first functorial isomorphism that, for any right $\M$-approximation $f$ of $H^{[-d,0]}(\XX)$ whose existence is due to (2), there exists a right $\M$-approximation $f'$ of $\sigma^{\leq 0}(\XX)$ such that $f=g\circ f'$. Then it follows from the second functorial isomorphism that $h\circ f'$ is a right $\M$-approximation of $\XX$.

    (3)$\implies$(1): Trivial.
\end{proof}

For any subcategory $\M$ of $\dH$, we define the following full subcategory of $\dH$:
$$\T(\M):=\{\XX\in\dH\mid\mathbb{E}^i(\M,\XX)=0\ \text{for all } i>0\}.$$
The equivalence between (1) and (2) in the following result establishes a connection between tilting subcategories and AIR tilting subcategories of extended hearts. The equivalence between (1) and (4) shows that a contravariantly finite $\mathbb{E}$-universal subcategory $\M$ of $\dH$ is tilting in our sense if and only if $\M$ is a tilting subcategory of projective dimension $d$ of the extriangulated category $\dH$ in the sense of \cite[Definition~4.1]{ZZ}. 

\begin{proposition}\label{prop5}
    Let $\M$ be a contravariantly finite $\mathbb{E}$-universal subcategory of $\dH$. Then the following are equivalent.
    \begin{enumerate}
        \item $\M$ is a tilting subcategory of $\dH$.
        \item $\M$ is its own $\P$-presentation and is AIR tilting with respect to itself.
        \item $\M\subseteq\P^{[0,d]}$ and $\T(\M)=\Fac_d(\M)=\Fac_{d+1}(\M)$.
        \item $\M\subseteq\P^{[0,d]}\cap\T(\M)$, and for any $\XX\in\T(\M)$, there exists an extriangle
        $\ZZ\to\MM\to\XX,$
        with $\MM\in\M$ and $\ZZ\in\T(\M)$.
    \end{enumerate}
\end{proposition}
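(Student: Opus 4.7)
\emph{Plan.} The strategy is to exploit the reformulation in Proposition~\ref{tilt=silt}, which identifies tilting subcategories of $\dH$ with $(d+1)$-term silting subcategories of $\K$, and then plug this into the AIR tilting machinery of Section~2. Because each of the four conditions quickly forces $\M\subseteq\P^{[0,d]}$, Lemma~\ref{confin} promotes the hypothesis that $\M$ is contravariantly finite and $\mathbb{E}$-universal in $\dH$ to the statement that $\M$ is contravariantly finite in $\D$; this is exactly the form of contravariant finiteness demanded by Propositions~\ref{prop3}, \ref{prop2}, and \ref{prop:equiv}. I would establish the cycle (1)$\Rightarrow$(2)$\Rightarrow$(3)$\Rightarrow$(4)$\Rightarrow$(1).

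\emph{(1)$\Rightarrow$(2)$\Rightarrow$(3).} Assume (1). By Proposition~\ref{tilt=silt}, $\M$ is a $(d+1)$-term silting subcategory of $\K$, so $\M\subseteq\P^{[0,d]}$. Since $\M\subseteq\dH=\D^{[-d+1,0]}$, the truncation $H^{[-d+1,0]}$ acts as the identity on $\M$, so $\add H^{[-d+1,0]}(\M)=\M$; that is, $\M$ is its own $\P$-presentation. Applying Proposition~\ref{prop:equiv} with $\S=\M$ yields that $\M$ is AIR tilting with respect to itself, proving (2). Assuming (2), Proposition~\ref{prop3}\ref{item:b} gives $\T(\S)=\Fac_d(\M)=\Fac_{d+1}(\M)$ for $\S=\M$. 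At this point I would record the routine observation that the Section~2 subcategory $\T(\S)=\{\XX\in\dH\mid\Hom(\S,\XX[i])=0,\ \forall i>0\}$ coincides with the Section~4 subcategory $\T(\M)=\{\XX\in\dH\mid\mathbb{E}^i(\M,\XX)=0,\ \forall i>0\}$, since under the standing hypothesis $\P\subseteq\dH$ (contravariantly finite) one has the functorial identification $\mathbb{E}^i(\M,\XX)\cong\Hom(\M,\XX[i])$ established at the start of Section~4. Combined with $\M\subseteq\P^{[0,d]}$, this gives (3).

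\emph{(3)$\Rightarrow$(4)$\Rightarrow$(1).} Under (3), $\M\subseteq\Fac_d(\M)=\T(\M)$ and $\M\subseteq\P^{[0,d]}$ by assumption, so $\M\subseteq\P^{[0,d]}\cap\T(\M)$; and any $\XX\in\T(\M)=\Fac_{d+1}(\M)$ fits, by the very definition of $\Fac_{d+1}$, into an extriangle $\ZZ\to\MM\to\XX$ with $\MM\in\M$ and $\ZZ\in\Fac_d(\M)=\T(\M)$, proving (4). For (4)$\Rightarrow$(1), the inclusion $\M\subseteq\T(\M)$ is exactly $\Hom(\M,\M[i])=0$ for all $i>0$, so $\M$ is a $(d+1)$-term presilting subcategory of $\K$; by Lemma~\ref{confin} it is also contravariantly finite in $\D$. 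Since $\add H^{[-d+1,0]}(\M)=\M$, the approximation statement in (4) is precisely the hypothesis of Proposition~\ref{prop2} (applied with $\S=\M$), and that proposition delivers that $\M$ is silting in $\K$. A final appeal to Proposition~\ref{tilt=silt} completes the cycle.

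\emph{Main obstacle.} The only genuinely conceptual point is the identification of the two $\T(-)$'s across Sections~2 and~4; once this is in hand, the proof is assembly of Propositions~\ref{tilt=silt}, \ref{prop:equiv}, \ref{prop3}\ref{item:b}, and \ref{prop2}, with Lemma~\ref{confin} used once at the start to upgrade the contravariant finiteness of $\M$ in $\dH$ to contravariant finiteness in $\D$. A secondary care point is verifying, in (3)$\Rightarrow$(2), that $\M$ literally is its own $\P$-presentation; this reduces to the elementary fact that $H^{[-d+1,0]}$ is the identity on $\dH$.
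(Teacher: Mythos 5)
Your proposal is correct and follows essentially the same route as the paper: the cycle (1)$\Rightarrow$(2)$\Rightarrow$(3)$\Rightarrow$(4)$\Rightarrow$(1) assembled from Propositions~\ref{tilt=silt}, \ref{prop:equiv}, \ref{prop3}\ref{item:b} and \ref{prop2}, with Lemma~\ref{confin} upgrading contravariant finiteness. Your explicit identification of the Section~2 class $\T(\S)$ with the Section~4 class $\T(\M)$ via $\mathbb{E}^i(\M,\XX)\cong\Hom(\M,\XX[i])$ is a point the paper leaves implicit, and it is a worthwhile clarification rather than a deviation.
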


\begin{proof}
    (1) $\implies$ (2): By condition~\ref{item:T1} in Definition~\ref{def:tilt} and Lemma~\ref{projdim}, we have $\M\in\P^{[0,d]}$. It follows that $\M$ is its own $\P$-presentation. Moreover, by Proposition~\ref{tilt=silt} and Lemma~\ref{confin}, $\M$ is a silting subcategory of $\K$ that is contravariantly finite in $\D$. Therefore, by Proposition~\ref{prop:equiv}, $\M$ is AIR tilting with respect to itself.

    (2) $\implies$ (3): From (2) and Proposition~\ref{prop:equiv}, it follows that $\M$ is a $(d+1)$-silting subcategory of $\K$. Then by Proposition~\ref{prop3}, the statement in (3) follows.
    
    (3) $\implies$ (4): Note that $\M\subseteq\Fac_d(\M)=\T(\M)$. For any $\XX\in\T(\M)=\Fac_{d+1}(\M)$, by definition, there exists an extriangle
    $$\ZZ\to\MM\to\XX,$$
    with $\MM\in\M$ and $\ZZ\in\Fac_d(\M)=\T(\M)$.

    (4) $\implies$ (1): By Lemma~\ref{prop2}, $\M$ is a silting subcategory of $\K$. Hence, by Proposition~\ref{tilt=silt}, $\M$ is a tilting subcategory of $\dH$.
\end{proof}

Throughout the rest of this section, let $\I$ be the full subcategory of $\dH$ consisting of $\mathbb{E}$-injective objects of $\dH$ and we assume that $\dH$ has enough injectives.

\begin{lemma}\label{X=0}
    For any object $\XX\in\dH$, if $\Hom(\XX,\I[i])=0$ for any $-d+1\leq i\leq 0$, then $\XX=0$.
\end{lemma}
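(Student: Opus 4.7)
The plan is to exploit the enough-injectives assumption in $\dH$ together with the hypothesis to successively shift $\XX$ upwards while remaining in $\dH$, and finally invoke the $t$-structure orthogonality to force $\XX=0$.

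The key step is the following observation: if $\YY\in\dH$ satisfies $\Hom(\YY,\I)=0$, then $\YY[1]\in\dH$. To see this, I would use enough injectives to produce an extriangle $\YY\xrightarrow{f}I\to\ZZ$ with $I\in\I$ and $\ZZ\in\dH$, lifting to a triangle
$$\YY\xrightarrow{f}I\to\ZZ\xrightarrow{\delta}\YY[1]$$
in $\D$. The assumption forces $f=0$, so the triangle splits and $\ZZ\cong I\oplus \YY[1]$. Since $\dH=\D^{\leq 0}\cap\D^{\geq -d+1}$ is an intersection of subcategories closed under direct summands, it is itself closed under direct summands, so $\YY[1]\in\dH$.

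I would then iterate this observation by induction on $k$ to show that $\XX[k]\in\dH$ for $k=0,1,\dots,d$. The base case $k=0$ is given. For the inductive step, assuming $\XX[k]\in\dH$ with $k\leq d-1$, the hypothesis yields
$$\Hom(\XX[k],\I)=\Hom(\XX,\I[-k])=0,$$
since $-k\in[-d+1,0]$ precisely when $0\leq k\leq d-1$. Applying the key observation to $\XX[k]$ then gives $\XX[k+1]\in\dH$.

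Finally, $\XX\in\dH\subseteq\D^{\leq 0}$ together with $\XX[d]\in\dH\subseteq\D^{\geq -d+1}$ yields $\XX\in\D^{\leq 0}\cap\D^{\geq 1}$, and by the defining orthogonality of the $t$-structure this intersection is zero, so $\XX=0$. The argument presents no serious obstacle; its only subtle point is that the hypothesized range of $i$ has exactly $d$ values, which is precisely what the induction needs in order to step $d$ times and place $\XX[d]$ back inside $\dH$.
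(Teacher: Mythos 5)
Your proof is correct and follows essentially the same route as the paper: the same key observation that $\Hom(\YY,\I)=0$ forces $\YY[1]\in\dH$ via an injective envelope extriangle with vanishing first map, iterated $d$ times, followed by the $t$-structure orthogonality $\D^{\leq 0}\cap\D^{\geq 1}=0$. Your write-up is in fact slightly more explicit than the paper's about why the range of $i$ supplies exactly the $d$ shifts needed.
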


\begin{proof}
    We first claim that for any object $\YY\in\dH$, if $\Hom(\YY,\I)=0$ then $\YY[1]\in\dH$. Indeed, since $\dH$ has enough injectives $\mathcal{I}$, there exists a triangle
    \[\YY\xrightarrow{f} I\to \YY'\to\YY[1],\]
    with $I\in\I$ and $\YY'\in\dH$. Since $f=0$, we have that $\YY[1]$ is a direct summand of $\YY'$ and hence belongs to $\dH$. Thus, by induction, we deduce that $\XX[i]\in\dH$ for any $0\leq i\leq d$. However, this is not possible unless $\XX=0$.
\end{proof}

It is well known that one characterization of a classical tilting module is that every injective module lies in the induced torsion class. We now investigate the meaning of this characterization for tilting subcategories of $\dH$, starting with the following lemma. 

\begin{lemma}\label{lem00}
    Let $\S\subseteq\P^{[0,d]}$. If $\I\subseteq\T(\S)$, then $\S\subseteq\dH$.
\end{lemma}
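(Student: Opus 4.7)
The plan is to show, for each $\SS\in\S$, that the ``left tail'' $\YY:=\sigma^{\leq -d}(\SS)$ of $\SS$ vanishes; together with the truncation triangle
\[\YY\to\SS\to M\to\YY[1],\quad M:=H^{[-d+1,0]}(\SS)\in\dH,\]
this will give $\SS\cong M\in\dH$.

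First I would pin down the range of $\YY$. Since $\P\subseteq\dH=\D^{[-d+1,0]}$, every shift $\P[k]$ lies in $\D^{[-d+1-k,-k]}$, so $\P^{[0,d]}\subseteq\D^{[-2d+1,0]}$ and therefore $\SS\in\D^{[-2d+1,0]}$; truncation then gives $\YY\in\D^{[-2d+1,-d]}$. In particular $\YY[-d]\in\D^{[-d+1,0]}=\dH$, which brings \Cref{X=0} into play: it suffices to verify
\[\Hom(\YY[-d],I[i])=0\quad\text{for all }I\in\I\text{ and }i\in[-d+1,0],\]
or equivalently $\Hom(\YY,I[l])=0$ for every $l\in[1,d]$.

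The heart of the argument is the long exact $\Hom$-sequence of the above triangle, whose relevant segment is
\[\Hom(\SS,I[l])\to\Hom(\YY,I[l])\to\Hom(M,I[l+1]).\]
For $l\geq 1$ the left term vanishes because $I\in\I\subseteq\T(\S)$, while the right term equals $\mathbb{E}^{l+1}(M,I)$ under the identification recalled at the start of Section~\ref{sec:tilt}, and this vanishes because $I$ is $\mathbb{E}$-injective in $\dH$ and $M\in\dH$. Hence $\Hom(\YY,I[l])=0$ for all $l\geq 1$, covering the required range $[1,d]$, and \Cref{X=0} then forces $\YY[-d]=0$, so $\YY=0$ and $\SS\cong M\in\dH$.

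The main subtlety is the shift bookkeeping: one must recognise that $\YY$ lives in $\D^{[-2d+1,-d]}$ so that shifting by $[-d]$ (and not $[d]$) lands it in $\dH$; this in turn dictates that the target vanishing is for \emph{positive} $l$, which is exactly the range supplied by the hypothesis $\I\subseteq\T(\S)$. Without getting this alignment right the computation stalls, but once it is set up correctly both inputs to the long exact sequence are near-immediate.
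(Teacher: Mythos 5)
Your proof is correct and follows essentially the same route as the paper's: the truncation triangle of $\SS$, the long exact sequence whose outer terms vanish because $I\in\T(\S)$ and because $I$ is $\mathbb{E}$-injective (via the identification of $\Hom(-,I[j])$ with $\mathbb{E}^j(-,I)$), and then \Cref{X=0}. Your endgame is in fact slightly cleaner: since the standing hypothesis $\P\subseteq\dH$ forces the tail $\sigma^{\leq -d}(\SS)$ into $\D^{[-2d+1,-d]}$, you can apply \Cref{X=0} directly to its shift by $[-d]$, whereas the paper argues by contradiction via the top nonvanishing cohomology and a second truncation.
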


\begin{proof}
    Recall from \eqref{eq:trunP}, there exists a triangle
    $$\sigma^{\leq -d}(\SS)\to\SS\to H^{[-d+1,0]}(\SS)\to \sigma^{\leq -d}(\SS)[1],$$
    where $\MM\in\dH$ and $\sigma^{\leq -d}(\SS)\in\D^{[-d+m,-d]}$ for some negative integer $m$. We claim $\sigma^{\leq -d}(\SS)=0$, which completes the proof. Indeed, if this is not the case, applying $\Hom(-,I)$ for any $I\in\I$ to this triangle, we obtain exact sequences
    $$\Hom(\SS,I[i])\to \Hom(\sigma^{\leq -d}(\SS),I[i])\to\Hom(\MM,I[i+1]),\ i>0,$$
    where the last term vanishes since $I$ is $\mathbb{E}$-injective in $\dH$. Moreover, as $I \in\mathcal{I}\subseteq \T(\S)$, the first term also vanishes. Hence, the middle term $\Hom(\sigma^{\leq -d}(\SS),I[i])=0$ for any $i>0$. Since $\sigma^{\leq -d}(\SS)\in\D^{[-d+m,-d]}$, there exists a negative integer $l$ with $m \leq l\leq 0$, such that $H^{-d+l}(\sigma^{\leq -d}(\SS))\neq 0$ and $H^{q}(\sigma^{\leq -d}(\SS))=0$ for any $q>-d+l$. Let $X=\left(\sigma^{\leq -d}(\SS)\right)[-d+l]$. Then $\XX\in\D^{\leq 0}$ and $H^0(\XX)=H^{-d+l}(\sigma^{\leq -d}(\SS))\neq 0$. By truncation of $\XX$, there exists a triangle
    \[\sigma^{\leq -d}(\XX)\to\XX\to H^{[-d+1,0]}(\XX)\to\sigma^{\leq -d}(\XX)[1].\]
    Applying $\Hom(-,I)$ to it, we obtain exact sequences
    \[\Hom(\sigma^{\leq -d}(\XX)[1],I[i])\to \Hom(H^{[-d+1,0]}(\XX),I[i])\to \Hom(X,I[i]),\ -d+1\leq i\leq 0,\]
    where the first term is zero since $\sigma^{\leq -d}(\XX)[1]\in\D^{\leq -d-1}$ and $I[i]\in\D^{\geq -d+1}$, and the last term is zero since $\Hom(\sigma^{\leq -d}(\SS),I[i])=0$ for any $i>0$. Therefore, $\Hom(H^{[-d+1,0]}(\XX),I[i])=0$ for any $-d+1\leq i\leq 0$, which by Lemma~\ref{X=0}, we have $H^{[-d+1,0]}(\XX)=0$, a contradiction. Thus, we have $\sigma^{\leq -d}(\SS)=0$.
\end{proof}

In what follows, we establish an alternative connection between tilting subcategories and AIR tilting subcategories that avoids the use of $\P$-presentations.

\begin{proposition}\label{prop002}
    Let $\M$ be a contravariantly finite $\mathbb{E}$-universal AIR tilting subcategory of $\dH$. Then $\M$ is a tilting subcategory if and only if $\I\subseteq\Fac_d(\M)$.
\end{proposition}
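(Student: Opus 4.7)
The plan is to combine \Cref{thm:bi} with the characterization of tilting subcategories in \Cref{prop5}, reducing both implications to concrete statements about a $\P$-presentation of $\M$ or about higher $\mathbb{E}$-extensions into an injective. Fix a $\P$-presentation $\S$ of $\M$ with respect to which $\M$ is AIR tilting, so that $\S$ is contravariantly finite in $\D$, $\T(\S)=\Fac_d(\M)$, and, by \Cref{thm:bi}, $\S$ is a $(d+1)$-term silting subcategory of $\K$ with $\M=\add H^{[-d+1,0]}(\S)$.

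For the ``if'' direction, the assumption $\I\subseteq\Fac_d(\M)=\T(\S)$ is precisely the hypothesis of \Cref{lem00}, which yields $\S\subseteq\dH$. Since then $\S\subseteq\dH\cap\P^{[0,d]}$, every $\SS\in\S$ satisfies $H^{[-d+1,0]}(\SS)=\SS$, so $\M=\add H^{[-d+1,0]}(\S)=\S$. In other words, $\M$ is its own $\P$-presentation and is AIR tilting with respect to itself, whence the equivalence (1)$\iff$(2) of \Cref{prop5} shows that $\M$ is a tilting subcategory.

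For the ``only if'' direction, \Cref{prop5} gives $\Fac_d(\M)=\T(\M)$, so it suffices to verify $\I\subseteq\T(\M)$, that is, $\mathbb{E}^i(\MM,I)=0$ for all $\MM\in\M$, $I\in\I$, and $i>0$. Taking a sequence of extriangles as in \eqref{eq:projreso} with $\ZZ_0=\MM$ and each $\ZZ_{i-1}\in\dH$, we have $\mathbb{E}^i(\MM,I)=\mathbb{E}(\ZZ_{i-1},I)$, which vanishes because $I$ is $\mathbb{E}$-injective in $\dH$. The only substantive obstacle is in the ``if'' direction, where one must invoke \Cref{lem00} to force the presentation $\S$ to lie inside $\dH$; once this is established, the identification $\M=\S$ is immediate from the fact that $H^{[-d+1,0]}$ restricts to the identity on $\dH$, and the remainder of both directions is essentially a bookkeeping exercise built on \Cref{prop5}.
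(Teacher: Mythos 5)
Your proof is correct and follows essentially the same route as the paper: the ``only if'' direction is the observation that $\Fac_d(\M)=\T(\M)$ contains $\I$ since $\mathbb{E}$-injectivity propagates to higher extensions by dimension shift, and the ``if'' direction hinges, exactly as in the paper, on \Cref{lem00} forcing the silting $\P$-presentation $\S$ into $\dH$ so that $\S=\M$. The only cosmetic difference is that you close the ``if'' direction via \Cref{prop5}(1)$\iff$(2) where the paper cites \Cref{tilt=silt}, and you cite \Cref{thm:bi} where \Cref{prop:equiv} is the more direct reference; both are immaterial.
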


\begin{proof}
    For the ``only if" part, since $\M$ is tilting, by Proposition~\ref{prop5}, $\Fac_d(\M)=\T(\M)$, which contains $\I$, as every object in $\I$ is $\mathbb{E}$-injective in $\dH$.

    For the ``if" part, by definition, there exists a $\P$-presentation $\S$ of $\M$ such that $\M$ is AIR tilting with respect to $\S$. Then, by Proposition~\ref{prop:equiv}, $\S$ is silting. Hence, by Proposition~\ref{prop3}, we have $\T(\S)=\Fac_d(\M)$. Thus, $\I\subseteq\T(\S)$. By Lemma~\ref{lem00}, it follows that $\S\subseteq\dH$, which implies $\S=\M$. Thus, by Proposition~\ref{tilt=silt}, $\M$ is a tilting subcategory of $\dH$. 
\end{proof}

The main result of this section provides an equivalent characterization of tilting subcategories and establishes a connection between tilting subcategories and quasi-tilting subcategories. We note that, in view of Proposition~\ref{prop5}, the equivalence between (1) and (2) has in fact already been established in a more general setting in \cite{ZZ}; see Theorem 1 and Remark 3 therein.

\begin{theorem}\label{thm:equiv}
    For any contravariantly finite $\mathbb{E}$-universal subcategory $\M$ of $\dH$, the following statements are equivalent.
    \begin{itemize}
        \item[(1)] $\M$ is a tilting subcategory of $\dH$.
        \item[(2)] $\T(\M)=\Fac_d(\M)$.
        \item[(3)] $\M$ is a quasi-tilting subcategory of $\dH$ satisfying $\I\subseteq\Fac_d(\M)$.
    \end{itemize}
\end{theorem}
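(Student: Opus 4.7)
My plan is to prove the theorem by establishing the cycle $(1)\Rightarrow(3)\Rightarrow(2)\Rightarrow(1)$. The final implication $(2)\Rightarrow(1)$ is already available from \cite[Theorem~1 and Remark~3]{ZZ} combined with \Cref{prop5}, as noted in the remark immediately preceding the statement, so only $(1)\Rightarrow(3)$ and $(3)\Rightarrow(2)$ require direct proof.

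For $(1)\Rightarrow(3)$: if $\M$ is tilting then \Cref{prop5} gives that $\M$ is AIR tilting with respect to itself and satisfies $\T(\M)=\Fac_d(\M)$, and \Cref{silisquasitil} then makes $\M$ quasi-tilting. Every $I\in\I$ is $\mathbb{E}$-injective in $\dH$, so computing higher $\mathbb{E}$-extensions through a projective resolution of any $\MM\in\M$ in $\dH$ gives $\mathbb{E}^i(\MM,I)=\mathbb{E}(\ZZ_{i-1},I)=0$ for every $i\geq 1$, whence $\I\subseteq\T(\M)=\Fac_d(\M)$.

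The core step is $(3)\Rightarrow(2)$, which I would split into the two inclusions defining $\T(\M)=\Fac_d(\M)$. For $\Fac_d(\M)\subseteq\T(\M)$, I would show $\mathbb{E}^i(\M,\Fac_d(\M))=0$ by induction on $i\geq 1$. The base $i=1$ is exactly \ref{item:QT1}. For the inductive step, given $\NN\in\Fac_d(\M)$, use that $\dH$ has enough injectives to form an extriangle $\NN\to I\to\NN'$ with $I\in\I$; the hypothesis $\I\subseteq\Fac_d(\M)$ of (3) together with closure of $\Fac_d(\M)$ under extensions from \Cref{thm:qtilt} forces $\NN'\in\Fac_d(\M)$. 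Applying $\Hom(\M,-)$ to the associated triangle in $\D$, the long exact sequence combined with the $\mathbb{E}$-injectivity of $I$ (which annihilates $\mathbb{E}^j(\M,I)$ for every $j\geq 1$) yields the shift $\mathbb{E}^{i+1}(\M,\NN)\cong\mathbb{E}^i(\M,\NN')$, which vanishes by induction. For the reverse inclusion, given $\XX\in\T(\M)$, the plan is to construct an extriangle $\ZZ\to\MM\to\XX$ in $\dH$ with $\MM\in\M$ and $\ZZ\in\T(\M)$, and then iterate $d$ times so that $\XX$ becomes a $d$-factor of $\M$. The extriangle is built by mimicking \Cref{prop3}\ref{item:a}: fix a $\P$-presentation $\S$ of $\M$ (which exists by \Cref{projpres}), take a right $\S$-approximation of $\XX$ in $\D$, and combine with the truncation triangle~\eqref{eq:trunP} via the octahedral axiom to produce a cocone lying in $\dH$. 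The condition $\ZZ\in\T(\M)$ then follows from a long exact sequence argument using the approximation property, the $\mathbb{E}^i(\M,\M)=0$ already established, and $\XX\in\T(\M)$.

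The hardest step is justifying the right $\S$-approximation of $\XX$ in $\D$, which demands that $\S$ be contravariantly finite in $\D$. By \Cref{confin} this reduces to $\M\subseteq\P^{[0,d]}$, equivalently to $\S=\M$; I would establish it by \Cref{lem00}, whose hypothesis $\I\subseteq\T(\S)$ reduces, through the truncation triangle~\eqref{eq:trunP}, to the vanishing $\Hom(\sigma^{\leq -d}(\SS),I[i])=0$ for $\SS\in\S$, $I\in\I$, $i>0$. This vanishing is extracted from the resolution of $I$ as a $d$-factor of $\M$ guaranteed by $\I\subseteq\Fac_d(\M)$, the inductive $\mathbb{E}$-vanishing of the previous paragraph, and the $(d+1)$-term structure of $\SS\in\P^{[0,d]}$ together with $\P\subseteq\dH$.
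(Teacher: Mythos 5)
Your overall architecture $(1)\Rightarrow(3)\Rightarrow(2)\Rightarrow(1)$ is reasonable, and the implication $(1)\Rightarrow(3)$ together with the inclusion $\Fac_d(\M)\subseteq\T(\M)$ in $(3)\Rightarrow(2)$ are essentially sound (one small misattribution there: to get $\NN'\in\Fac_d(\M)$ from the extriangle $\NN\to I\to\NN'$ with $\NN,I\in\Fac_d(\M)$ you need \Cref{prop001}(1), i.e.\ closure of the \emph{third} term, not ``closure under extensions,'' which concerns the middle term; the needed fact is available, so this is cosmetic). The dimension-shift $\mathbb{E}^{i+1}(\M,\NN)\cong\mathbb{E}^i(\M,\NN')$ is exactly the mechanism the paper uses via the isomorphisms~\eqref{eq:isos}.

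The genuine gap is in the reverse inclusion $\T(\M)\subseteq\Fac_d(\M)$, specifically in your justification that the $\P$-presentation $\S$ is contravariantly finite in $\D$. You reduce this to $\S=\M$, i.e.\ $\M\subseteq\P^{[0,d]}$, and propose to get that from \Cref{lem00}; but verifying the hypothesis $\I\subseteq\T(\S)$ of \Cref{lem00} is where the argument is circular. Applying $\Hom(-,I[i])$ to the truncation triangle~\eqref{eq:trunP} and using $\mathbb{E}$-injectivity of $I$ gives $\Hom(\SS,I[i])\cong\Hom(\sigma^{\leq-d}(\SS),I[i])$ for $i>0$; writing $\WW:=\sigma^{\leq-d}(\SS)[-d]\in\dH$, the vanishing you need for $1\leq i\leq d$ is $\Hom(\WW,I[i-d])=0$ with $i-d\leq 0$, and by \Cref{X=0} this is essentially \emph{equivalent} to $\WW=0$, i.e.\ to the conclusion $\SS\in\dH$ you are after. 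None of the ingredients you list (the $d$-factor resolution of $I$, the established $\mathbb{E}$-vanishing, the $(d+1)$-term structure of $\SS$) produces these negative-degree vanishings, because they only control $\Hom(-,I[j])$ for $j\geq 1$. The paper sidesteps this entirely: from $(3)$ it first proves $\M\subseteq\P^{[0,d]}$ \emph{without} any $\P$-presentation, by taking an injective coresolution $\YY_{i-1}\to I_i\to\YY_i$ of an arbitrary $\XX\in\dH$, observing that $\YY_i\in\Fac_d(\I)\subseteq\Fac_d(\Fac_d(\M))\subseteq\Fac_d(\M)$ for $i\geq d$ by \Cref{closefac}, whence $\mathbb{E}^j(\M,\XX)\cong\mathbb{E}(\M,\YY_{j-1})=0$ for $j>d$ by \ref{item:QT1}, and then invoking \Cref{projdim}. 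Once $\M\subseteq\P^{[0,d]}$ is in hand, $\M$ is its own $\P$-presentation, \Cref{confin} applies, and the inclusion $\T(\M)\subseteq\Fac_d(\M)$ follows by walking the coresolution back down with \Cref{prop001}(3) rather than by constructing approximation extriangles. You should replace your \Cref{lem00}-based step with this coresolution argument; as written, the key finiteness/projective-dimension claim is not proved.
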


\begin{proof}
    (1) $\implies$ (2): This follows directly from Proposition~\ref{prop5}.

    (2) $\implies$ (3): First, since any object of $\I$ is $\mathbb{E}$-injective in $\dH$, we have $\I\subseteq\T(\M)=\Fac_d(\M)$. Thus, it remains to show that $\M$ is quasi-tilting.

    Since $\Fac_d(\M)=\T(\M)$, any object of $\M$ is $\mathbb{E}$-projective in $\Fac_d(\M)$, that is, \ref{item:QT1} holds for $\M$. To verify \ref{item:QT2}, let $\XX\in\Fac_d(\M)$. By definition, there exists an extriangle
    $$\XX'\to \MM\xrightarrow{g}\XX,$$
    with $\MM\in\M$ and $\XX'\in\Fac_{d-1}(\M)$. Since $\M$ is contravariantly finite in $\dH$, there is a right $\M$-approximation $f:\MM'\to \XX$ of $\XX$. Hence, there exists a morphism $h:\MM\to \MM'$ such that $g=f\circ h$. It follows that, by the octahedral axiom, there exists a commutative diagram of triangles:
    $$\xymatrix{
    &\YY[-1]\ar@{=}[r]\ar[d]&\YY[-1]\ar[d]\\
    \XX[-1]\ar[r]\ar@{=}[d]&\XX'\ar[r]\ar[d]&\MM\ar[d]^{h}\ar[r]^{g}&\XX\ar@{=}[d]\\
    \XX[-1]\ar[r]&\ZZ\ar[r]\ar[d]&\MM'\ar[r]^f\ar[d]&\XX\\
    &\YY\ar@{=}[r]&\YY
    }$$
    Therefore, there exists a triangle 
    $$\XX'\to\MM\oplus\ZZ\to\MM'\to\XX'[1],$$
    in which both $\XX'$ and $\MM'$ belong to $\dH$. Hence, $\MM\oplus\ZZ$ also lies in $\dH$, and thus $\ZZ$ lies in $\dH$ as well. Applying $\Hom(\MM'',-)$ for any $\MM''$ to the triangle in the third row of the above diagram and using that $f$ is a right $\M$-approximation of $\XX$, we deduce $\ZZ\in\T(\M)=\Fac_d(\M)$, which implies $\XX\in\Fac_{d+1}(\M)$. Thus, $\Fac_d(\M)\subseteq\Fac_{d+1}(\M)$. Combining this with the inclusion~\eqref{eq:subset}, we conclude that \ref{item:QT2} holds for $\M$. Hence, $\M$ is a quasi-tilting subcategory of $\dH$.

    (3) $\implies$ (1): For any $\XX\in\dH$, since $\dH$ has enough injectives $\I$, there exist extriangles
    \begin{equation}\label{eq:tri003}
        \YY_{i-1}\to I_{i}\to \YY_i,\ i\geq 1,
    \end{equation}
    where $\YY_0=\XX$ and $I_i\in\I$ for all $i\geq 1$. Since $\I\subseteq\Fac_d(\M)$, by definition, $\YY_i\in\Fac_d(\Fac_d(\M))$ for all $i\geq d$. Then, by Proposition~\ref{closefac}, we have $\YY_i\in\Fac_d(\M)$ for any $i\geq d$. By \ref{item:QT1}, it follows that $\mathbb{E}(\M,\YY_i)=0$ for all $i\geq d$. Applying $\Hom(\MM,-)$ for any $\MM\in\M$ to the extriangles in \eqref{eq:tri003}, we obtain isomorphisms
    \begin{equation}\label{eq:isos}
        \Hom(\MM,\YY_i[j])\cong \Hom(\MM,\YY_{i-1}[j+1])\cong\cdots\cong \Hom(\MM,\XX[i+j]),\ i\geq 1,j\geq 0.
    \end{equation}
    Hence, $\mathbb{E}^j(\MM,\XX)\cong\Hom(\MM,\XX[j])\cong\Hom(\MM,\YY_{j-1}[1])=0$ for all $j>d$. Therefore, the projective dimension of $\MM$ is at most $d$. Then, by Lemma~\ref{projdim}, $\M\subseteq\P^{[0,d]}$. Hence, by Proposition~\ref{prop5} and \ref{item:QT2}, it suffices to show that $\T(\M)=\Fac_d(\M)$. 
    
    If the object $\XX$ in the first paragraph belongs to $\T(\M)$, 
    then by the isomorphisms in \eqref{eq:isos}, we have $\YY_i\in\T(\M)$ for all $i\geq 1$. Since $\YY_d\in\Fac_d(\M)$,  applying Proposition~\ref{prop001}~(3) repeatedly yields $\YY_{d-1},\ldots,\YY_1,\YY_0=\XX \in\Fac_d(\M)$. Therefore, $\T(\M)\subseteq\Fac_d(\M)$.
    
    If the object $\XX$ in the first paragraph belongs to $\Fac_d(\M)$, then by applying Proposition~\ref{prop001}~(1) to the extriangles in \eqref{eq:tri003}, we have $\YY_i\in\Fac_d(\M)$ for all $i>0$. By \ref{item:QT1}, it follows that $\mathbb{E}(\M,\YY_i)=0$ for all $i>0$. Then, by the isomorphisms in \eqref{eq:isos}, we have $\mathbb{E}^j(\MM,\XX) \cong \Hom(\MM,\XX[j]) \cong \Hom(\MM,\YY_{j-1}[1])=0$ for any $\MM\in\M$ and all $j\geq 1$. It follows that $\XX\in \T(\M)$, which implies that $\Fac_d(\M)\subseteq\T(\M)$. Thus, we obtain $\T(\M)=\Fac_d(\M)$ as required.
\end{proof}

Combining Corollary~\ref{cor:injmap}, Proposition~\ref{prop5} and Theorem~\ref{thm:equiv}, we obtain the following corollary.

\begin{corollary}\label{cor:injmap2}
    There is an injective map $\M\mapsto \Fac_d(\M)$ from the set of contravariantly finite $\mathbb{E}$-universal tilting subcategories of $\dH$ to the set of $s$-torsion classes in $\dH$ that contains all injective objects.
\end{corollary}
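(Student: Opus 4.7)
The plan is to derive this corollary almost entirely from the three results cited in the hint: Corollary \ref{cor:injmap}, Proposition \ref{prop5}, and Theorem \ref{thm:equiv}. The argument splits cleanly into two tasks: showing the map is well-defined (i.e.\ $\Fac_d(\M)$ is indeed an $s$-torsion class containing all injectives), and showing it is injective.

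For well-definedness, let $\M$ be a contravariantly finite $\mathbb{E}$-universal tilting subcategory of $\dH$. By Proposition \ref{prop5}, $\M$ is AIR tilting (with respect to itself as $\P$-presentation). Applying Corollary \ref{cor:injmap} then produces an $s$-torsion pair $(\Fac_d(\M),\M^{\bot_{\leq 0}})$ in $\dH$, so $\Fac_d(\M)$ is an $s$-torsion class. Containment of $\I$ in $\Fac_d(\M)$ is immediate from the implication (1)$\Rightarrow$(3) in Theorem \ref{thm:equiv}.

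For injectivity, suppose $\M$ and $\M'$ are two such tilting subcategories with $\Fac_d(\M)=\Fac_d(\M')$. Applying Proposition \ref{prop5} to each, both $\M$ and $\M'$ are AIR tilting with themselves as $\P$-presentations; equivalently, by Proposition \ref{tilt=silt} combined with Lemma \ref{confin}, they are $(d+1)$-term silting subcategories of $\K$ that are contravariantly finite in $\D$. Proposition \ref{prop3}\ref{item:b} gives $\T(\M)=\Fac_d(\M)$ and likewise for $\M'$, so the hypothesis forces $\T(\M)=\T(\M')$. The uniqueness clause in Proposition \ref{rmk:silt to tor} then yields $\M=\M'$.

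There is no real obstacle here since every ingredient has been prepared; the task is simply to chain the cited results in the correct order. The one subtle point to note is that, although Corollary \ref{cor:injmap} officially lands in $s$-torsion \emph{pairs}, we only need the torsion-class component and can pass back and forth between $\M$ and its $s$-torsion class via the identification $\T(\M)=\Fac_d(\M)$ afforded by Proposition \ref{prop3}\ref{item:b}.
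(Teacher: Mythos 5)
Your proposal is correct and follows exactly the route the paper intends: the paper gives no explicit proof, stating only that the corollary follows by combining \Cref{cor:injmap}, \Cref{prop5} and \Cref{thm:equiv}, and your argument is precisely the careful unpacking of that combination (with the injectivity step traced back through \Cref{prop3} and \Cref{rmk:silt to tor}, which is also how \Cref{cor:injmap} itself is proved).
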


\section{Applications}\label{sec:app}

In this section, we apply our results to the following two settings.
\begin{enumerate}
    \item[(I)] $\D$ is a Hom-finite Krull-Schmidt $\k$-linear triangulated category, where $\k$ is a field.
    \item[(II)] $\D$ is a triangulated category with arbitrary coproducts.
\end{enumerate}
Note that in Case (II), the extended heart $\dH$ is closed under arbitrary coproducts. Each setting includes a concrete subcase:
\begin{itemize}
    \item $\D=D^b(\mod A)$, the bounded derived category of finitely generated modules over a finite-dimensional $\k$-algebra $A$, with $\P=\add A$, $\H\simeq\mod A$, and $\K\simeq K^b(\proj A)$. In this case, $\dH=\emod{d}{A}$, the category of \emph{$d$-extended (small) modules} over $A$. Accordingly, we refer to this case as the $\emod{d}{A}$ case.
    \item $\D=D(\lmod R)$, the derived category of modules over a unitary ring $R$, where $\P=\ladd R$, $\H\simeq\lmod R$, and $\K\simeq K^b(\lproj R)$. In this setting, $\dH=\elmod{d}{R}$, the category of \emph{$d$-extended large modules} over $R$. Accordingly, we refer to this case as the $\elmod{d}{R}$ case.
\end{itemize}

For any object $\XX$ of $\D$, we set
\[\A(\XX)=\begin{cases}
    \add\XX & \text{in Case (I),}\\
    \ladd\XX & \text{in Case (II),}
\end{cases}\]
where $\ladd\XX$ denotes the \emph{large additive closure} of $\XX$ in $\D$, that is, the full subcategory of $\D$ consisting of all direct summands of (possibly infinite) coproducts of copies of $\XX$. A common feature of these two settings is the following lemma.

\begin{lemma}\label{approx}
    Let $\XX$ be an object of $\D$. Then $\A(\XX)$ is contravariantly finite in $\D$.
\end{lemma}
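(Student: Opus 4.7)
The plan is to handle the two cases separately, using in each case a single universal morphism assembled from all morphisms $X \to Y$, and then to verify the factorization property in two stages.

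In Case (I), I would exploit Hom-finiteness to make $\Hom_\D(X,Y)$ into a finite-dimensional $\k$-vector space and pick a basis $f_1,\dots,f_n$. Assembling these into a single morphism $f=(f_1,\dots,f_n)\colon X^n\to Y$, I would first verify directly that any morphism $h\colon X^m\to Y$ factors through $f$: its components $h_i\in\Hom_\D(X,Y)$ expand as $\k$-linear combinations of the $f_j$, and the resulting matrix of scalars defines a morphism $A\colon X^m\to X^n$ with $h=f\circ A$. The extension from $X^m$ to an arbitrary $X'\in\add X$ is then handled by writing $X'$ as a direct summand of some $X^m$ via a section-retraction pair $(\iota,\pi)$: a morphism $h'\colon X'\to Y$ becomes $h'\pi\colon X^m\to Y$, which factors through $f$ by the previous step, and composing back with $\iota$ yields the required factorization of $h'$.

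In Case (II), the same strategy carries over with the finite direct sum $X^n$ replaced by the coproduct $X^{(I)}:=\coprod_{g\in I}X$ indexed by $I=\Hom_\D(X,Y)$. I would take $f\colon X^{(I)}\to Y$ to be the unique morphism whose restriction to the $g$-th summand is $g$ itself; this exists by the universal property of coproducts. Any morphism $h\colon X^{(J)}\to Y$ then factors through $f$ via the canonical map $X^{(J)}\to X^{(I)}$ that sends the $j$-th copy of $X$ to the copy indexed by the $j$-th component $h_j$ of $h$. The summand reduction from $X^{(J)}$ to an arbitrary $X'\in\ladd X$ is handled by the same section-retraction trick as in Case (I).

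I do not anticipate a serious obstacle here; the construction is essentially the standard proof that the (large) additive closure of a single object is contravariantly finite. The only technical point to monitor is the direct-summand closure built into $\add X$ and $\ladd X$: the representing objects $X^n$ and $X^{(I)}$ alone do not give the approximation property for free, and the section-retraction reduction is indispensable. In Case (II) one must also take care that the indexing set $\Hom_\D(X,Y)$ may be a proper class-sized issue only if $\D$ is not locally small, but under the standing assumption that $\D$ is a category with Hom-sets the coproduct $X^{(I)}$ is a bona fide object of $\D$ by hypothesis.
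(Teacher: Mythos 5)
Your proposal is correct and follows essentially the same route as the paper: in Case (I) take a basis of the finite-dimensional space $\Hom(\XX,\YY)$ and in Case (II) take the whole set $\Hom(\XX,\YY)$ as the index set, then assemble the universal morphism $\XX^{(I)}\to\YY$ whose components are the elements of $I$. The paper states the approximation property ``by construction''; your explicit verification via linear combinations and the section–retraction reduction to direct summands simply fills in the routine details.
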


\begin{proof}
    Let $\YY$ be an object of $\D$. Let $I$ be a basis of $\Hom(\XX,\YY)$ in Case (I); in Case (II), let $I=\Hom(\XX,\YY)$. Then the object $\XX^{(I)}$ belongs to $\A(\XX)$. Consider the morphism
    $$\XX^{(I)}\xrightarrow{(g)_{g\in I}}\YY,$$
    whose components are given by the elements of $I$. By construction, this morphism is a right $\A(\XX)$-approximation of $\YY$, proving that $\A(\XX)$ is contravariantly finite.
\end{proof}

Hence, when considering subcategories of the form $\A(\XX)$ for some object $\XX$, the hypotheses concerning contravariantly finiteness required in the results of the preceding sections are no longer needed. In particular, we can define silting objects of $\K$ and AIR tilting objects of $\dH$ as follows.

\begin{definition}
    An object $\SS$ of $\K$ is called \emph{silting} if 
    \begin{enumerate}
        \item[(S1)] $\Hom(\A(\SS),\A(\SS)[i])=0$ for all $i>0$, and
        \item[(S2)] $\thick(\A(\SS))=\K$.
    \end{enumerate}
    Two silting objects $\SS$ and $\SS'$ are called \emph{equivalent} if $\A(\SS)=\A(\SS')$.
    
    An object $\MM$ of $\dH$ is called \emph{AIR tilting} if there exists $\SS\in\P^{[0,d]}$ such that 
    $$H^{[-d+1,0]}(\SS)\cong\MM\text{ and }\T(\PP)=\Fac_d(\A(\MM)).$$
    Two AIR tilting objects $\MM$ and $\MM'$ are called \emph{equivalent} if $\A(\MM)=\A(\MM')$.
\end{definition}

In the case $\K=K^b(\proj A)$, a silting object $\PP$ of $\K$ is (up to isomorphism) a usual silting complex. In the case $\K=K^b(\lproj R)$, a silting object of $\K$ is (up to isomorphism) a semi-tilting complex in the sense of \cite{W1}.

In the $\emod{d}{A}$ case, comparing Proposition~\ref{prop:equiv} with \cite[Theorem~4.7]{Z}, an object $\MM$ is AIR tilting if and only if there exists $P\in\proj A$ such that the pair $(\MM,P)$ is a $\AR{d}$-tilting pair in the sense of \cite[Definition~4.5]{Z}. In particular, when $d=1$, this coincides with the notion of support $\tau$-tilting modules introduced in \cite{AIR}. In the $\elmod{d}{R}$ case, when $d=1$, this coincides with the notion of silting modules introduced in \cite{AMV}.

Applying Theorem~\ref{thm:bi} and Corollary~\ref{cor:injmap}, we obtain the following result.

\begin{corollary}
    There is a bijection between the set of equivalence classes of $(d+1)$-term silting objects of $\K$ and the set of equivalence classes of AIR tilting objects of $\dH$, given by $\SS \mapsto H^{[-d+1,0]}(\SS)$. Moreover, there is an injective map from the set of equivalence classes of AIR tilting objects of $\dH$ to the set of $s$-torsion classes in $\dH$, given by $\MM\mapsto\Fac_d(\A(\MM))$.
\end{corollary}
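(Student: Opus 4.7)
The plan is a direct reduction to Theorem~\ref{thm:bi} and Corollary~\ref{cor:injmap} via the passage from objects to their associated subcategories $\A(\SS)$ and $\A(\MM)$. By Lemma~\ref{approx}, every such subcategory is automatically contravariantly finite in $\D$, so the finiteness hypotheses of the main results come for free.

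First, I would verify the translation between objects and subcategories. A $(d+1)$-term silting object $\SS$ of $\K$ gives a $(d+1)$-term silting subcategory $\A(\SS)$ of $\K$: the condition $\SS\in\P^{[0,d]}$ propagates to $\A(\SS)\subseteq\P^{[0,d]}$ using that $\P^{[0,d]}$ is closed under direct summands by \cite[Proposition~2.7]{IYa} and, in Case (II), under arbitrary coproducts; while conditions (S1) and (S2) are exactly the presilting and thick-generating conditions for $\A(\SS)$. By definition, two silting objects are equivalent if and only if they determine the same subcategory $\A(\SS)$, and analogously for AIR tilting objects, so equivalence classes correspond bijectively to the subcategories of the form $\A(\SS)$ and $\A(\MM)$ respectively. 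Since $H^{[-d+1,0]}$ is additive and, in Case (II), preserves arbitrary coproducts, one has
\[\add H^{[-d+1,0]}(\A(\SS))=\A(H^{[-d+1,0]}(\SS)),\]
so the subcategory-level bijection of Theorem~\ref{thm:bi} restricts to the well-defined map $[\SS]\mapsto[H^{[-d+1,0]}(\SS)]$ on equivalence classes. The inverse direction uses the definition of AIR tilting object together with Proposition~\ref{prop:equiv}, which guarantees that any $\P$-presentation $\A(\SS)$ witnessing $\A(\MM)$ as AIR tilting is itself silting in $\K$.

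For the second assertion I would compose this bijection with the injective map $\M\mapsto(\Fac_d(\M),\M^{\bot_{\leq 0}})$ of Corollary~\ref{cor:injmap}; since an $s$-torsion pair in $\dH$ is determined by its torsion class, this yields the desired injective map $[\MM]\mapsto\Fac_d(\A(\MM))$ into the set of $s$-torsion classes. The only delicate technical point is the displayed identity above in Case (II), which rests on the truncation functors $\sigma^{\leq 0}$ and $\sigma^{\geq -d+1}$ commuting with arbitrary coproducts — a standard property of the $t$-structure in the motivating setting $\D=D(\lmod R)$ with $\P=\ladd R$.
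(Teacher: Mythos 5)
Your proposal is correct and follows exactly the route the paper intends: the paper gives no argument beyond ``Applying Theorem~\ref{thm:bi} and Corollary~\ref{cor:injmap}'', and your translation between objects and the subcategories $\A(\SS)$, $\A(\MM)$ (with contravariant finiteness supplied by Lemma~\ref{approx} and the identity $\add H^{[-d+1,0]}(\A(\SS))=\A(H^{[-d+1,0]}(\SS))$) is precisely the omitted bookkeeping. The one technical point you flag -- that truncation commutes with coproducts in Case~(II) -- is indeed what the identity rests on there, and it holds since both aisles of the $t$-structure are closed under coproducts in that setting.
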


In the $\emod{d}{A}$ case, this recovers part of \cite[Theorem~4.7]{Z} (cf. also \cite[Theorem~4.1]{G}). In the $\elmod{d}{R}$ case, when $d=1$, this recovers part of \cite[Theorem~4.11]{AMV}.

\begin{definition}
    An object $\MM$ of $\dH$ is called \emph{quasi-tilting} if 
    \begin{enumerate}
        \item[(QT1)] $\MM$ is $\mathbb{E}$-projective in $\Fac_d(\A(\MM))$, and
        \item[(QT2)] $\Fac_d(\A(\MM))=\Fac_{d+1}(\A(\MM))$.
    \end{enumerate}
\end{definition}

In the $\emod{d}{A}$ and $\elmod{d}{R}$ cases, when $d=1$, this notion coincides with that of quasi-tilting modules introduced in \cite{CDT,AMV}; see \cite[Lemma~3.1]{AMV}. By Proposition~\ref{silisquasitil}, any quasi-tilting object of $\dH$ is AIR tilting. The converse has been established in \cite{W4} for the $\emod{d}{A}$ case when $d=1$.

We also want to remark that, in the $\emod{d}{A}$ case, an object $\MM\in\emod{d}{A}$ satisfying (QT1) coincides with what is called a \emph{positive $\AR{d}$-rigid} object in the sense of \cite[Definition~4.1]{Z}; see \cite[Remark~4.3]{Z}.

Since by construction, $\Fac_d(\A(\MM))$ is closed under finite direct sums in Case (I), and under arbitrary coproducts in Case (II), an application of Theorem~\ref{thm:qtilt} yields the following result.

\begin{corollary}
    Let $\MM$ be a quasi-tilting object of $\dH$. Then $\Fac_d(\A(\MM))$ is closed under extensions and $d$-factors, has enough projectives given by $\A(\MM)$, and satisfies $$\Fac_d(\A(\MM))=\A(\Fac_d(\A(\MM))).$$
\end{corollary}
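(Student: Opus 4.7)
The plan is to deduce this corollary directly from \Cref{thm:qtilt} applied to the subcategory $\M:=\A(\MM)$. First I would verify that $\A(\MM)$ is a quasi-tilting subcategory of $\dH$ in the sense of \Cref{def:quasitilting}. Condition \ref{item:QT1} for the subcategory follows from the same condition for the object $\MM$, since $\mathbb{E}$-projectivity is preserved under direct summands of (possibly infinite) direct sums, and the statement \ref{item:QT2} transfers because $\Fac_d$ and $\Fac_{d+1}$ only depend on the defining subcategory, which is the same: $\A(\MM)$ in both cases.

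Once this is in place, \Cref{thm:qtilt} immediately yields all the assertions except the final identity: it gives closure of $\Fac_d(\A(\MM))$ under extensions, $d$-factors, and direct summands, and identifies the $\mathbb{E}$-projective objects of $\Fac_d(\A(\MM))$ with $\A(\MM)$, which in particular provides enough projectives.

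It therefore only remains to establish $\Fac_d(\A(\MM))=\A(\Fac_d(\A(\MM)))$. The inclusion $\subseteq$ is tautological. For the reverse inclusion, I would use the remark preceding the corollary, namely that $\Fac_d(\A(\MM))$ is closed under finite direct sums in Case (I) and under arbitrary coproducts in Case (II). This holds because the defining extriangles of a $d$-factor, together with their (finite or arbitrary) coproducts, remain extriangles in $\dH$ under the running hypotheses on $\D$, and $\A(\MM)$ itself is closed under these (co)products by construction. Combining this with the closure of $\Fac_d(\A(\MM))$ under direct summands from \Cref{thm:qtilt} shows that $\Fac_d(\A(\MM))$ is closed under the operation $\A$, proving $\A(\Fac_d(\A(\MM)))\subseteq\Fac_d(\A(\MM))$.

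The only mildly delicate point is the preservation of $d$-factorhood under coproducts in Case (II); this is essentially a book-keeping argument using that $\dH$ is closed under arbitrary coproducts (as noted in the paper) and that each of the $d$ defining extriangles for a $d$-factor can be assembled coordinate-wise into a coproduct extriangle whose middle term still lies in $\A(\MM)=\ladd(\MM)$. Everything else is a direct citation of \Cref{thm:qtilt}.
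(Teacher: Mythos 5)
Your proposal is correct and follows essentially the same route as the paper, which likewise obtains everything by applying \Cref{thm:qtilt} to $\M=\A(\MM)$ and combining the closure under (co)products noted just before the corollary with the closure under direct summands from the theorem. Your added verifications (that $\A(\MM)$ satisfies \ref{item:QT1} and \ref{item:QT2} as a subcategory, and that coproducts of the defining extriangles remain extriangles in Case (II)) are exactly the details the paper leaves implicit.
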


In the remainder of the paper, we restrict our attention to the cases: $\dH=\emod{d}{A}$ and $\dH=\elmod{d}{R}$. In this setting, we refer to the objects of $\dH$ as \emph{extended modules}. Note that in both cases we have the inclusions $\P\subseteq\H\subseteq\dH$. Consequently, all notions and results developed in Section~\ref{sec:tilt} apply directly to these two cases.

\begin{definition}
    An extended module $\MM\in\dH$ is called \emph{tilting} if 
    \begin{enumerate}
        \item[(T1)] $\MM$ has projective dimension at most $d$;
        \item[(T2)] $\mathbb{E}^i(\A(\MM),\A(\MM))=0$ for all $i>0$; and 
        \item[(T3)] for any $P\in\P$, there exist extriangles
                    $$\YY_{i-1}\to \MM_{i-1}\to \YY_i,\ 1\leq i\leq r,$$
                    for some $r$, with $\YY_0=P$ and $\MM_0,\dots,\MM_{r-1},\YY_r\in\A(\MM)$.
    \end{enumerate}
\end{definition}

By Proposition~\ref{prop5} and Theorem~\ref{thm:bi}, the tilting $d$-extended modules in $\dH$ are precisely those extended modules that are isomorphic to $(d+1)$-term silting complexes $\PP = (P^i, \partial^i : P^i \to P^{i+1}; -d\leq i\leq 0)$ in $\K$ such that $\partial^{-d}$ is injective.

The following example illustrates the motivation behind the notion of tilting extended modules.

\begin{example}\label{exm:tiltmod}
    A module $T\in\H$ is called $d$-tilting (see \cite{Mi} for the case $\H=\mod A$, and \cite{AC} for $\H=\lmod R$), if the following conditions hold.
    \begin{enumerate}
        \item[(TM1)] The projective dimension of $T$ is at most $d$.
        \item[(TM2)] $\Ext^i(\A(T),\A(T))=0$ for all $i>0$.
        \item[(TM3)] There exists an exact sequence $$0\to A\text{ (or $R$)}\to T_0\to\cdots\to T_{r}\to 0$$ in $\H$ with $T_i\in\A(T)$ for all $0\leq i\leq r$, for some $r$.
    \end{enumerate}
    If we regard $\H$ as a full subcategory of $\dH$ in the natural way, then a module $T\in\H$ is $d$-tilting if and only if it is a tilting extended module in $\dH$.
\end{example}

Any silting complex can be regarded as a tilting object of a suitable extended module category.

\begin{example}
    Let $\SS$ be an arbitrary silting complex in $\K$. Since $\SS$ is bounded, there exist an integer $t$ and a positive integer $\tilde{d}$ such that $H^i(\SS)=0$ for all $i>t$ and $i\leq t-\tilde{d}$. It follows that $\SS[t] \in \tilde{d}\text{-}\H$. Moreover, since $\SS[t]$ can be identified with its own $\P$-presentation, it is a tilting extended module in $\tilde{d}$-$\H$ by Proposition~\ref{prop5}.
\end{example}

Note that in both cases, $\H=\mod A$ and $\H=\lmod R$, the category $\H$ has enough injectives, denoted by $\mathcal{J}$. It follows that $\dH$ has enough injectives $\mathcal{J}[d-1]$. Consequently, we can apply Theorem~\ref{thm:equiv} to obtain the following result, which generalizes \cite[Theorem~3.11]{Ba} and \cite[Theorem~4.3]{W3}.

\begin{corollary}
    Let $\MM$ be an extended module in $\dH$. Then the following statements are equivalent.
    \begin{enumerate}
        \item[(1)] $\MM$ is tilting.
        \item[(2)] $\T(\MM)=\Fac_d(\A(\MM))$.
        \item[(3)] $\MM$ is quasi-tilting and $\mathcal{J}[d-1]\subseteq\Fac_d(\A(\MM))$.
    \end{enumerate}
\end{corollary}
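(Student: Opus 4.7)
The approach is to apply \Cref{thm:equiv} with $\M := \A(\MM)$. To carry this out, I need (i) to verify that $\M$ satisfies the hypotheses of \Cref{thm:equiv}, namely contravariant finiteness and $\mathbb{E}$-universality in $\dH$; (ii) to reconcile the object-level conditions of this section with the subcategory-level ones of \Cref{sec:tilt}; and (iii) to identify the class $\I$ of $\mathbb{E}$-injectives in $\dH$ with $\mathcal{J}[d-1]$.

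Steps (i) and (ii) are routine. For (i), \Cref{approx} gives that $\A(\MM)$ is contravariantly finite in $\D$, hence in $\dH$; the same lemma also produces a right $\A(\MM)$-approximation of each $\XX[1]$ with $\XX \in \dH$, and completing it to a triangle $\XX \to \YY \to \MM' \to \XX[1]$ places $\YY$ in $\dH$ since $\dH$ is extension-closed in $\D$, which yields $\mathbb{E}$-universality via the characterization preceding \Cref{confin}. For (ii), projective dimension is preserved under direct summands and arbitrary coproducts, so (T1)--(T3) on $\MM$ translate term-by-term to the tilting subcategory conditions on $\A(\MM)$; the quasi-tilting definitions likewise match, and $\Fac_d(\A(\MM))$ and $\T(\A(\MM))$ appear identically on both sides.

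For step (iii), I would first show $\mathcal{J}[d-1] \subseteq \I$. Given $J \in \mathcal{J}$ and $\XX \in \dH = \H[d-1] \ast \cdots \ast \H$, an induction along the filtration reduces the computation of $\mathbb{E}(\XX, J[d-1]) = \Hom(\XX, J[d])$ to the layer-wise vanishing $\Hom(\YY[k], J[d]) = \Ext_\H^{d-k}(\YY, J) = 0$ for $\YY \in \H$ and $0 \leq k \leq d-1$, which holds since $d-k \geq 1$ and $J$ is injective. For the reverse inclusion, I would exhibit enough $\mathcal{J}[d-1]$-injectives in $\dH$: given $\XX \in \dH$, choose an injective envelope $H^{-d+1}(\XX) \hookrightarrow J$ in $\H$ and consider the truncation triangle $H^{-d+1}(\XX)[d-1] \to \XX \to \sigma^{\geq -d+2}(\XX)$; the envelope map lifts to a morphism $\XX \to J[d-1]$ because the obstruction sits in $\Hom(\sigma^{\geq -d+2}(\XX), J[d])$, which vanishes by the same Ext-computation (now with $d-k \geq 2$), and the cone of the lift lies in $\D^{\leq 0} \cap \D^{\geq -d+1} = \dH$ by extension-closure of each aisle. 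Any $\mathbb{E}$-injective object of $\dH$ then splits off such an extriangle as a direct summand of some $J[d-1]$, hence lies in $\mathcal{J}[d-1]$ since $\mathcal{J}$ is closed under direct summands.

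With (i)--(iii) in hand, \Cref{thm:equiv} applied to $\M = \A(\MM)$ yields the three-way equivalence of the corollary. The main obstacle is (iii), particularly the enough-injectives construction, where one needs to verify carefully that the lift exists and that its cone remains inside $\dH$; the remaining steps are essentially translations between the extended-module and extriangulated formalisms.
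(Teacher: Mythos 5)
Your proposal matches the paper's proof, which likewise deduces the corollary by applying \Cref{thm:equiv} after observing that $\dH$ has enough injectives given by $\mathcal{J}[d-1]$; the paper leaves your steps (i)--(iii) implicit, and your fillings-in are correct. One small point in (iii): extension-closure of the aisles only places the cone of $\XX\to J[d-1]$ in $\D^{\geq -d}$, and you need the monomorphism $H^{-d+1}(\XX)\hookrightarrow J$ (which your injective-envelope choice does provide) to kill $H^{-d}$ of the cone and land in $\dH$.
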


We note one further interesting example satisfying the condition in Case (I): let $\D=D_{fd}(\mod B)$ be the finite-dimensional derived category of a proper non-positive differential graded $\k$-algebra $B$, with $\P = \add B$, $\H \simeq \mod H^0(B)$, and $\K = \per B$. We refer to $\dH$ in this setting as a $d$-truncated subcategory of $D_{fd}(\mod B)$. If one additionally assumes that $\P \subseteq \dH$, then $\dH$ has enough projectives given by $\P$ and enough injectives given by $\mathbb{D}\P[d-1]$. Consequently, all the notions and results developed in this section apply to this case as well. We also note that the Auslander–Reiten theory for $\dH$ has been established in \cite{MP} in this setting, generalizing the corresponding theory for $\emod{d}{A}$ \cite{Z}.


\begin{thebibliography}{99}
    \bibitem{AET}
    T.~Adachi, H.~Enomoto and M.~Tsukamoto,
    \emph{Intervals of $s$-torsion pairs in extriangulated categories with negative first extensions}, 
    Math. Proc. Cambridge Philos. Soc. 174 (2023), 451--469.

    \bibitem{AIR}
    T.~Adachi, O.~Iyama and I.~Reiten,
    \emph{$\tau$-tilting theory},
    Compos. Math. 150 (2014), 415--452.

    \bibitem{AMY}
    T.~Adachi, Y.~Mizuno and D.~Yang,
    \emph{Discreteness of silting objects and t-structures in triangulated categories},
    P. Lond. Math. Soc. 118 (2019), 1--42.

    \bibitem{AI}
    T.~Aihara and O.~Iyama, 
    \emph{Silting mutation in triangulated categories}, 
    J. Lond. Math. Soc. (2) 85 (2012), 633--668.

    \bibitem{AC}
    L.~Angeleri~H\"{u}gel and F.~U.~Coelho, 
    \emph{Infinitely generated tilting modules of finite projective dimension}, 
    Forum Math. 13 (2001), 239--250.
    
    \bibitem{AMV}
    L.~Angeleri~H\"{u}gel, F.~Marks and J.~Vit\'{o}ria,
    \emph{Silting modules},  Int. Math. Res. Not. 2016 (2016), 1251--1284.

    \bibitem{AMMP}
    A.~Argudin-Monroy, O.~Mendoza and C.~E.~Parra,
    \emph{Tilting objects in the extended heart of $t$-structure},
    arXiv:2503.20604.

    \bibitem{APR}
    M.~Auslander, M.~Platzeck and I.~Reiten, 
    \emph{Coxeter functors without diagrams}, 
    Trans. Amer. Math. Soc. 250 (1979), 1--46.

    \bibitem{Ba}
    S.~Bazzoni, 
    \emph{A characterization of n-cotilting and n-tilting modules}, J. Algebra 273 (2004), 359--372.

    \bibitem{BGP}
    I.~N.~Bernstein, I.~M.~Gelfand and V.~A.~Ponomarev,
    \emph{Coxeter functors and Gabriel's theorem},
    Russ. Math. Surv. 28 (1973), 17--32.

    \bibitem{Bon}
    K.~Bongartz, 
    \emph{Tilted Algebras}, 
    Proc. ICRA III (Puebla 1980), Lecture Notes in Math. No. 903, Springer-Verlag 1981, 26--38.

    \bibitem{BB}
    S.~Brenner and M.~C.~R.~Butler, 
    \emph{Generalizations of the Bernstein-Gel’fand-Ponomarev reflection functors}, 
    in: Representation Theory, II, Proc. Second Internat. Conf., Carleton Univ., Ottawa, Ont., 1979, in: Lecture Notes in Math., vol. 832, Springer, Berlin, 1980, pp. 103--169.

    \bibitem{ColbF}
    R.~Colby and K.~Fuller, 
    \emph{Tilting, cotilting and serially tilted rings}, 
    Comm. Algebra 25 (1997), 3225--3237.

    \bibitem{CDT}
    R.~Colpi, G.~D’Este and A.~Tonolo, 
    \emph{Quasi-tilting modules and counter equivalences}, 
    J. Algebra 191 (1997), 461--94.
    
    \bibitem{CF}
    R.~Colpi and K.~Fuller, 
    \emph{Tilting objects in abelian categories and quasitilted rings}, 
    Trans. Amer. Math. Soc. 359 (2007), 741--765.

    \bibitem{CT}
    R.~Colpi and J.~Trlifaj,
    \emph{Tilting modules and tilting torsion theories},
    J. Algebra 178 (1995), 614--634.

    \bibitem{DF}
    H.~Derksen and J.~Fei,
    \emph{General presentations of algebras},
    Adv. Math. 278 (2015), 210--237.

    \bibitem{G}
    E.~Gupta,
    \emph{$d$-term silting objects, torsion classes, and cotorsion classes},
    arXiv:2407.10562.

    \bibitem{HR}
    D.~Happel and C.~M.~Ringel, 
    \emph{Tilted algebras}, 
    Trans. Amer. Math. Soc. 274 (1982), 399--443.
    
    \bibitem{IYa}
    O.~Iyama and D.~Yang,
    \emph{Silting reduction and Calabi-Yau reduction of triangulated categories},
    Trans. Amer. Math. Soc. 370 (2018), 7861--7898.
    
    \bibitem{IY}
    O.~Iyama and Y.~Yoshino,
    \emph{Mutation in triangulated categories and rigid Cohen–Macaulay modules}, Invent. Math. 172 (2008), 117–168.

    \bibitem{KV}
    B.~Keller and D.~Vossieck,
    \emph{Aisles in derived categories},
    Bull. Soc. Math. Belg. S\'{e}r. A 40 (1988), 239--253.
    
    \bibitem{KY}
    S.~K\"{o}nig and D.~Yang,
    \emph{Silting objects, simple-minded collections, $t$-structures and co-$t$-structures for finite-dimensional algebras},
    Doc. Math. 19 (2014), 403--438.

    \bibitem{LN}
    Y.~Liu and H.~Nakaoka,
    \emph{Hearts of twin cotorsion pairs on extriangulated categories},
    J. Algebra 528 (2019), 96--149.

    \bibitem{Mi}
    Y.~Miyashita, 
    \emph{Tilting modules of finite projective dimension}, 
    Math. Z. 193 (1986), 113--146.

    \bibitem{MP}
    N.~Mochizuki and M.~Plogmann,
    \emph{On the Auslander--Reiten Theory for Extended Hearts of Proper Connective DG Algebras},
    arXiv:2505.16560.

    \bibitem{NP}
    H.~Nakaoka and Y.~Palu, 
    \emph{Extriangulated categories, Hovey twin cotorsion pairs and model structures}, 
    Cah. Topol. G\'{e}om. Diff\'{e}r. Cat\'{e}g. 60 (2019), 117--193.

    \bibitem{R}
    J.~Rickard,
    \emph{Morita theory for derived categories},
    J. Lond. Math. Soc. (2) 39 (1989), 436--456.
    
    \bibitem{W3}
    J.~Wei,
    \emph{$n$-Star modules and $n$-tilting modules},
    J. Algebra 283 (2005), 711--722.
    
    \bibitem{W1}
    J.~Wei,
    \emph{Semi-tilting complexes},
    Israel J. Math. 194 (2013), 871--893.

    \bibitem{W4}
    J.~Wei,
    \emph{$\tau$-Tilting theory and $\ast$-modules},
    J. Algebra 414 (2014), 1--5.

    \bibitem{W2}
    J.~Wei,
    \emph{Wakamatsu-silting complexes},
    Comm. Algebra 46 (2018), 2417--2427.
    
    \bibitem{Z}
    Y.~Zhou,
    \emph{Tilting theory for extended module categories}, 
    arXiv:2411.15473.

    \bibitem{ZZ}
    B.~Zhu and X.~Zhuang,
    \emph{Tilting subcategories in extriangulated categories},
    Front. Math. China 15 (2020), 225--253.
    
\end{thebibliography}
\end{document}